\let\cite\citep
\newtheorem{theorem}{Theorem} 
\newtheorem{lemma}{Lemma}
\newtheorem{corollary}{Corollary}
\newtheorem{definition}{Definition}
\newtheorem{proposition}{Proposition}
\newtheorem{fact}{Observation}
\newtheorem{problem}{Problem}
\newenvironment{ctheorem}[1]
{\innercustomthm}
{\endinnercustomthm}
\newenvironment{clemma}[1]
{\innercustomlem}
{\endinnercustomlem}
\newtheorem{inner-claim}{Claim}
\newenvironment{claim-proof}%
{\begin{description}[leftmargin = 0.2cm, labelsep = 0.2cm]
    \item \emph{Proof of Claim:}}{\hfill$\diamond$\end{description}}
\newcommand{\PROBLEM}[1]{\textsc{#1}}
\DeclareMathOperator{\symdiff}{\triangle}
\def\arrowedvec{\mathaccent"017E}
\newcommand{\G}{\arrowedvec{G}}
\newcommand{\child}{\mathsf{child}}
\newcommand{\ur}{\textit{UR}\xspace}
\DeclareMathOperator{\lca}{lca}
\DeclareMathOperator{\Aho}{Aho}
\newcommand{\Rbin}{\mathop{\mathscr{R}^{\textrm{B}}}}
\newcommand{\cupdot}{\charfusion[\mathbin]{\cup}{\cdot}}
\newcommand{\bigcupdot}{\charfusion[\mathbin]{\bigcup}{\cdot}}
\def\moverlay{\mathpalette\mov@rlay}
\def\mov@rlay#1#2{\leavevmode\vtop{%
    \baselineskip\z@skip \lineskiplimit-\maxdimen
    \ialign{\hfil$\m@th#1##$\hfil\cr#2\crcr}}}
\newcommand{\charfusion}[3][\mathord]{
  #1{\ifx#1\mathop\vphantom{#2}\fi
    \mathpalette\mov@rlay{#2\cr#3}
  }
  \ifx#1\mathop\expandafter\displaylimits\fi}
\DeclareRobustCommand\bigop[1]{%
  \mathop{\vphantom{\sum}\mathpalette\bigop@{#1}}\slimits@
}
\newcommand{\bigop@}[2]{%
  \vcenter{%
    \sbox\z@{$#1\sum$}%
    
\hbox{\resizebox{\ifx#1\displaystyle.9\fi\dimexpr\ht\z@+\dp\z@}{!}{$\m@th#2$}}%
  }%
}
\providecommand{\keywords}[1]{\textbf{\textit{Keywords: }} #1}
\title{Heuristic Algorithms for Best Match Graph Editing}
\author[1,2]{David Schaller}
\author[3]{Manuela Gei{\ss}}
\author[4]{Marc Hellmuth}
\author[1,2,5-7]{Peter F.\ Stadler}
\affil[1]{Max Planck Institute for Mathematics in the Sciences,
          Inselstra{\ss}e 22, D-04103 Leipzig, Germany
          \authorcr \texttt{sdavid@bioinf.uni-leipzig.de}}
\affil[2]{Bioinformatics Group, Department of Computer Science \&
          Interdisciplinary Center for Bioinformatics, Universit{\"a}t Leipzig,
          H{\"a}rtelstra{\ss}e~16--18, D-04107 Leipzig, Germany
          \authorcr \texttt{studla@bioinf.uni-leipzig.de}}
\affil[3]{Software Competence Center Hagenberg GmbH, Hagenberg, Austria
          \authorcr \texttt{manuela.geiss@scch.at}}
\affil[4]{Department of Mathematics, Faculty of Science,
          Stockholm University, SE-10691 Stockholm, Sweden
          \authorcr \texttt{marc.hellmuth@math.su.se}}
\affil[5]{Institute for Theoretical Chemistry, University of Vienna,
          W{\"a}hringerstrasse 17, A-1090 Wien, Austria}
\affil[6]{Facultad de Ciencias, Universidad National de Colombia, Sede
          Bogot{\'a}, Colombia}
\affil[7]{Santa Fe Insitute, 1399 Hyde Park Rd., Santa Fe NM 87501, USA}
\date{\ }
\begin{document}

\maketitle 

\abstract{  
  Best match graphs (BMGs) are a class of colored digraphs that
  naturally appear in mathematical phylogenetics and can be approximated
  with the help of similarity measures between gene sequences, albeit not
  without errors. The corresponding graph editing problem can be used as
  a means of error correction. Since the arc set modification problems
  for BMGs are NP-complete, efficient heuristics are needed if BMGs are
  to be used for the practical analysis of biological sequence data. Since
  BMGs have a characterization in terms of consistency of a certain set
  of rooted triples, we consider heuristics that operate on triple sets.
  As an alternative, we show that there is a close connection to a set
  partitioning problem that leads to a class of top-down recursive
  algorithms that are similar to Aho's supertree algorithm and give rise
  to BMG editing algorithms that are consistent in the sense that they
  leave BMGs invariant. Extensive benchmarking shows that
  community detection algorithms for the partitioning steps perform best
  for BMG editing.
}

\bigskip
\noindent
\keywords{arc modification problems \and consistent algorithm \and
  NP-hardness \and computational biology}

\sloppy

\section{Introduction}

A wide range of tasks in computational biology start by determining, for a
given ``query gene'' $x$ in a species $A$, one or all genes $y$ in another
species $B$ that are most similar to $x$. Conceptually, these \emph{best
  matches} $y$ of the query genes $x$ are meant to approximate the set of
genes in $B$ that are evolutionary most closely related to $x$. Best
matches can be identified by comparing evolutionary distances
\cite{Nei:06}, which in turn are usually obtained from sequence alignments
\cite{Chatzou:16}. In practice, fast approximation algorithms such as
\texttt{blast} and its more modern successors are often used for this
purpose \cite{MorenoHagelsieb:08,HernandezSalmeron:20}.  Even if sequence
similarity is measured perfectly, deviations from a common molecular clock,
i.e., differences in the evolutionary rates of different genes, cause
discrepancies between \emph{best hits} (most similar sequences) and
\emph{best matches} (evolutionary most closely related sequences), see
\cite{Stadler:20a} for a detailed discussion.

The idea of best matches in the sense of closest evolutionary relatedness
pre-supposes an underlying tree $T$ that describes the phylogenetic
relationships among the genes, which correspond to the leaves of $T$, and a
map $\sigma$ assigning to each gene $x$ the species $\sigma(x)$ in which it
resides. Given such a leaf-colored tree $(T,\sigma)$, the best match graph
$\G(T,\sigma)$ has as its vertex set the leaves of $T$, i.e., the set of
genes, and as (directed) arcs the best matches. The latter are defined as
the pairs $(x,y)$ for which the last common ancestor of $x$ and $y$ is at
least as close to $x$ as the last common ancestor of $x$ and any other gene
$y'$ from the same spaces $\sigma(y')=\sigma(y)$. Best match graphs (BMGs),
i.e., graphs that are derived from a leaf-labeled tree $(T,\sigma)$ in this
manner, form a very restrictive class of colored digraphs
\cite{Geiss:19a,BMG-corrigendum}. Empirically determined best hit data
therefore will in general not satisfy the defining properties of BMGs. They
can be corrected in part, however, by considering the problem of editing a
given graph to the closest BMG. \citet{Schaller:20y} showed that the
arc modification problems for BMGs are NP-hard, but can be formulated as
integer linear programs (ILPs) allowing practical solutions for small
instances. However, in computational biology, applications to large gene
families would be of particular interest, creating the need for faster,
approximate solutions for BMG editing. Before embarking to develop software
for a BMG-based analysis of large sequence data sets, we need to understand
whether the editing problem for BMGs is tractable in practice with
sufficient accuracy and for interestingly large instances.  The purpose of
this contributions is to establish that this is indeed the case.

Motivated by both theoretical and practical considerations, we are mainly
interested in heuristics that are consistent in the following sense: Let
$\mathbb{A}$ be an algorithm that takes an arbitrary vertex-colored digraph
$(\G,\sigma)$ as input and outputs a BMG $\mathbb{A}(\G,\sigma)$. Then
$\mathbb{A}$ is consistent if $\mathbb{A}(\G,\sigma)=(\G,\sigma)$ whenever
the input graph $(\G,\sigma)$ is a BMG.

A vertex-colored digraph $(\G=(V,E),\sigma)$ is a BMG if and only if (a) a
certain set of informative triples $\mathscr{R}(\G,\sigma)$, which can
easily be read off the input graph, is consistent and (b) the BMG
$\G(\hat T, \sigma)$ of the corresponding so-called Aho tree
$\hat T \coloneqq \Aho(\mathscr{R}(\G,\sigma),V)$ coincides with $(\G,\sigma)$
\cite{BMG-corrigendum}. In general, the Aho tree $\Aho(\mathscr{R}, V)$ of
a compatible set of triples $\mathscr{R}$ on a set $V$ is a least resolved
super-tree of all the triples in $\mathscr{R}$. For a BMG,
$\hat T \coloneqq \Aho(\mathscr{R}(\G,\sigma), V)$ is the unique least
resolved tree (LRT) for $(\G,\sigma)=\G(\hat T,\sigma)$. These close
connections between recognizing BMGs and constructing super-trees suggest
to adapt ideas from heuristic algorithms for triple consistency problems
and super-tree construction for BMG editing.

The simplest approach, therefore, is to extract a maximal consistent subset
$\mathscr{R}^*$ from $\mathscr{R}(\G,\sigma)$ and to use
$\G(\Aho(\mathscr{R}^*,V),\sigma)$ as an approximation, see
Sec.~\ref{sect:simple}. A more detailed analysis of arcs in $(\G,\sigma)$
that violate the property of being a BMG in Sec.~\ref{sect:UR}, however,
leads a notion of ``unsatisfiable relations'' (UR), which can be used to
count the arc modifications associated with a partition $\mathscr{V}$ of
the vertex set $V$ of $\G$. It also gives raise to a top-down algorithm in
which the vertex set of $\G$ is recursively edited and partitioned. A large
class of heuristics for BMG editing can be constructed depending on the
construction of the partition $\mathscr{V}$ in each recursion step. We
shall see that the arc edit sets in different steps of the recursion are
disjoint. A main result of this contribution,
Thm.~\ref{thm:algo-consistent}, links the partitions $\mathscr{V}$
appearing in BMG editing algorithms to the auxiliary graphs appearing in the
\texttt{BUILD} algorithm for supertree construction \cite{Aho:81}, see
Sec.~\ref{sect:notation}.  This provides a guarantee that the BMG editing
algorithms are consistent provided the choice of $\mathscr{V}$ is such that it
does not enforce edits whenever an alternative partition with and empty UR
is available. For BMGs, this is in particular the case for the partitions
appearing in the \texttt{BUILD} algorithm.  In Sec.~\ref{sect:UR-NP}, we
proceed to show by reduction from \PROBLEM{Set Splitting} that finding
a partition with a minimal number of unsatisfiable relations is NP-hard.

The theoretical results are complemented by computational experiments on
BMGs with randomly perturbed arc sets in
Sec.~\ref{sec:heur-computational-exp}.  We focus on a comparison of
different algorithms to construct the partitions $\mathscr{V}$. Somewhat
surprisingly, we find that minimizing the cardinality of the UR alone is
not the best approach, since this tends to produce very unbalanced
partitions and thus requires a large number of steps in the recursions
whose costs add up. Instead, certain types of clustering or community
detection approaches that favor more balanced partitions tend to perform well.

\section{Notation and Preliminaries}
\label{sect:notation}

\paragraph{Partitions.} $\mathscr{V}=\{V_1,V_2,\dots,V_k\}$ is a partition
of a set $V$ if (i) $V_i\ne\emptyset$, (ii) $\bigcup_{i=1}^k V_i = V$ and
(iii) $V_i\cap V_j=\emptyset$ for $i\ne j$. A partition is non-trivial if
$|\mathscr{V}|\ge2$. Consider two partitions
$\mathscr{V}=\{V_1,\dots,V_k\}$ and $\mathscr{V}'=\{V'_1,\dots,V'_l\}$ of
$V$. If for every $1\le j'\le l$ there is a $j$ such that
$V'_{j'}\subseteq V_j$, i.e., if every set in $\mathscr{V}'$ is
completely contained in a set in $\mathscr{V}$, then $\mathscr{V}'$ is a
\emph{refinement} of $\mathscr{V}$, and $\mathscr{V}$ is a
\emph{coarse-graining} of $\mathscr{V}'$.

\paragraph{Graphs.} 
Mostly, we consider simple directed graphs (digraphs) $\G=(V,E)$ with
vertex set $V$ and arc set
$E\subseteq V\times V \setminus \{(v,v)\mid v\in V\}$. We will frequently
write $V(\G)$ and $E(\G)$ to explicitly refer to the graph $\G$. For a
vertex $x\in V$, we say that $(y,x)$ is an \emph{in-arc} and $(x,z)$ is an
\emph{out-arc}. The subgraph induced by a subset $W\subseteq V$ is denoted
by $\G[W]$. Undirected graphs can be identified with symmetric digraphs,
i.e., the undirected graph $G$ underlying a digraph $\G$ is obtained by
dropping the direction of all arcs, or by symmetrizing the digraph, i.e.,
adding the arc $(y,x)$ to $E(\G)$ for every arc $(x,y)\in E(\G)$.  When
referring to an undirected graph $G$, we write $xy$ for
$(x,y),(y,x)\in E(G)$ and call $xy$ an \emph{edge}.  The (weakly) connected
components of $\G$ are the maximal connected subgraphs of the undirected
graph underlying $\G$ or, equivalently, the maximal strongly connected
components of the symmetrized digraph.  Whenever the context is clear, we
will also refer to the partition of $V$ formed by the vertex sets of the
maximal connected subgraphs as the set of connected components.

A vertex coloring is a map $\sigma:V\to S$, where $S$ is a non-empty set of
colors.  A vertex coloring of $\G$ is \emph{proper} if
$\sigma(x)\ne\sigma(y)$ whenever $(x,y)\in E(\G)$.  We write $(\G,\sigma)$
for a vertex-colored digraph and denote by $V[r]$ the subset of vertices of
a graph $(\G=(V,E),\sigma)$ that have color $r$.  Moreover, we define
$\sigma(W)\coloneqq \{\sigma(x) \mid x\in W\}$ for the subset of colors
present in a set $W\subseteq V$.

We write $N(x)$ for the set of out-neighbors of a vertex $x\in V(\G)$ and
$N^-(x)$ for the set of in-neighbors of $x$. A graph $\G$ is called
\emph{sink-free} if $N(x)\ne\emptyset$ holds for all $x\in V(\G)$.
We write
$A\symdiff B \coloneqq (A\setminus B)\cup(B\setminus A)$ for the symmetric
difference of two sets $A$ and $B$, and define, for a graph $\G=(V,E)$ and
arc set $F\subseteq (V\times V)\setminus\{(v,v)\mid v\in V\}$, the graph
$\G\symdiff F\coloneqq(V, E\symdiff F)$.
Analogously, we write $\G+ F\coloneqq(V, E\cup F)$ and $\G- F\coloneqq(V, 
E\setminus F)$.

\paragraph{Phylogenetic trees.} Consider an undirected, rooted tree $T$
with leaf set $L(T)\subseteq V(T)$ and root $\rho_T\in V(T)$. Its inner
vertices are given by the set $V^0(T) = V(T) \setminus L(T)$. The 
\emph{ancestor order} on $V(T)$ is defined such that
$u\preceq_T v$ if $v$ lies on the unique path from $u$ to the root
$\rho_T$, i.e., if $v$ is an ancestor of $v$. For brevity we set
$u \prec_T v$ if $u \preceq_{T} v$ and $u \neq v$. If $xy$ is an edge in
$T$ such that $y \prec_{T} x$, then $x$ is the \emph{parent} of $y$ and $y$
the \emph{child} of $x$. The set of children of a vertex $x\in V(T)$ is
denoted by $\child_T(x)$. 
A tree is \emph{phylogenetic} if all its inner vertices have at least two 
children. All trees considered in this contribution will be phylogenetic.
For a non-empty subset $A\subseteq V(T)$, we
define $\lca_T(A)$, the \emph{last common ancestor of $A$}, to be the
unique $\preceq_T$-minimal vertex of $T$ that is an ancestor of every
$u\in A$.  Following e.g.\ \citet{Bryant:95}, we denote by $T_{L'}$ the
\emph{restriction} of $T$ to a subset $L'\subseteq L(T)$, i.e.\ $T_{L'}$ is
obtained by identifying the (unique) minimal subtree of $T$ that connects
all leaves in $L'$, and suppressing all vertices with degree two except
possibly the root $\rho_{T_{L'}}=\lca_T(L')$.  We say that $T$
\emph{displays} or \emph{is a refinement of} a tree $T'$, in symbols
$T'\le T$, if $T'$ can be obtained from a restriction $T_{L'}$ of $T$ by a
series of inner edge contractions.  $(T,\sigma)$ is a leaf-colored tree if
$\sigma: L(T)\to S$ is a map from the leaves of $T$ into a non-empty set of
colors.  We say that $(T',\sigma')$ is displayed by $(T,\sigma)$ if
$T'\le T$ and $\sigma(v)=\sigma'(v)$ for all $v\in L(T')$.

\paragraph{Rooted triples.} A \emph{(rooted) triple} is a tree on three
leaves and with two inner vertices.  We write $xy|z$ for the triple on the
leaves $x,y$ and $z$ if the path from $x$ to $y$ does not intersect the
path from $z$ to the root in $T$, i.e., if
$\lca_T(x,y)\prec_T \lca_T(x,z)=\lca_T(y,z)$. In this case we say that $T$
displays $xy|z$. We write
$\mathscr{R}_{|L'} \coloneqq\left\{ xy|z \in \mathscr{R} \,\colon x,y,z\in
L' \right\}$ for the restriction of a triple set $\mathscr{R}$ to a set
$L'$ of leaves.  A set $\mathscr{R}$ of triples is \emph{consistent} if
there is a tree $T$ with leaf set $L:=\bigcup_{T'\in\mathscr{R}} L(T')$
that displays every triple in $\mathscr{R}$. The polynomial-time algorithm
\texttt{BUILD} decides for every triple set $\mathscr{R}$ whether it is
consistent, and if so, constructs a particular tree, the \emph{Aho tree}
$\Aho(\mathscr{R}, L)$, that displays every triple in $\mathscr{R}$
\cite{Aho:81}.  The algorithm relies on the construction of an (undirected)
auxiliary graph, the \emph{Aho graph}, for a given triple set $\mathscr{R}$
on a set of leaves $L$. This graph, denoted by
$[\mathscr{R}, L]$, contains an edge $xy$ if and only if
$xy|z\in\mathscr{R}$ for some $z\in L$.

\section{Best match graphs}
\label{sect:BMG}

\begin{definition}
  Let $(T,\sigma)$ be a leaf-colored tree. A leaf $y\in L(T)$ is a
  \emph{best match} of the leaf $x\in L(T)$ if $\sigma(x)\neq\sigma(y)$ and
  $\lca(x,y)\preceq_T \lca(x,y')$ holds for all leaves $y'$ of color
  $\sigma(y')=\sigma(y)$.
  \label{def:BMG}
\end{definition}
Given $(T,\sigma)$, the graph $\G(T,\sigma) = (V,E)$ with vertex set
$V=L(T)$, vertex-coloring $\sigma$, and with arcs $(x,y)\in E$ if and only
if $y$ is a best match of $x$ w.r.t.\ $(T,\sigma)$ is called the \emph{best
  match graph} (BMG) of $(T,\sigma)$ \cite{Geiss:19a}.

\begin{definition}\label{def:BestMatchGraph}
  An arbitrary vertex-colored graph $(\G,\sigma)$ is a \emph{best match
    graph (BMG)} if there exists a leaf-colored tree $(T,\sigma)$ such that
  $(\G,\sigma) = \G(T,\sigma)$. In this case, we say that $(T,\sigma)$
  \emph{explains} $(\G,\sigma)$.
\end{definition}
We say that $(\G=(V,E),\sigma)$ is an $\ell$-BMG if
$|\sigma(V)|=\ell$.  By construction, there is at least one best match of
$x$ for every color $s\in\sigma(V)\setminus\{\sigma(x)\}$:
\begin{fact}
  For every vertex $x$ and every color $s\neq \sigma(x)$ in a BMG
  $(\G,\sigma)$ there is some vertex $y\in N(x)$ with $\sigma(y)=s$.
  Equivalently, the subgraph induced by every pair of colors is sink-free.
  \label{fact:allcolors-out}
\end{fact}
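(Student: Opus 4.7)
The plan is to unpack Definitions~\ref{def:BMG} and~\ref{def:BestMatchGraph}. Since $(\G,\sigma)$ is a BMG, fix a leaf-colored tree $(T,\sigma)$ with $(\G,\sigma)=\G(T,\sigma)$. Fix a vertex $x\in V$ and a color $s\neq\sigma(x)$. Since $\sigma$ is the same map on $V=L(T)$, the color $s$ must be realized at some leaf; otherwise $s\notin\sigma(V)$, contradicting the hypothesis $s\in\sigma(V)\setminus\{\sigma(x)\}$. Hence the set $L_s\coloneqq\{y\in L(T)\mid \sigma(y)=s\}$ is non-empty.

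Next I would exploit the tree structure to produce a best match. For every $y\in L_s$, the vertex $\lca_T(x,y)$ lies on the unique path $P_x$ from $x$ up to the root $\rho_T$. The vertices of $P_x$ are totally ordered by $\preceq_T$, so the finite family $\{\lca_T(x,y)\mid y\in L_s\}\subseteq P_x$ has a $\preceq_T$-minimum. Picking any $y^*\in L_s$ attaining this minimum yields $\lca_T(x,y^*)\preceq_T\lca_T(x,y')$ for all $y'\in L_s$, and since $\sigma(y^*)=s\neq\sigma(x)$, Definition~\ref{def:BMG} certifies $y^*$ as a best match of $x$. Thus $(x,y^*)\in E(\G)$, i.e., $y^*\in N(x)$ has color $s$, which is exactly the first assertion.

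For the equivalence, I would just translate: the subgraph $(\G,\sigma)[V[r]\cup V[s]]$ has a sink $x$ iff $x$ has no out-neighbor of the opposite color in $\{r,s\}\setminus\{\sigma(x)\}$, which is precisely the negation of the first assertion applied to that one opposite color. Conversely, if every pair of colors induces a sink-free subgraph, then picking $r=\sigma(x)$ and any $s\neq\sigma(x)$ shows that $x$ has an out-neighbor of color $s$.

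There is no real obstacle here; the only subtlety worth flagging is that $\preceq_T$ is only a partial order on $V(T)$, so one cannot minimize $\lca_T(x,y)$ over all of $V(T)$. The argument goes through because the relevant $\lca$'s are forced onto the totally ordered chain $P_x$, which is the whole point of conditioning on a fixed reference leaf~$x$.
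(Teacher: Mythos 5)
Your argument is correct and is exactly the reasoning the paper leaves implicit behind "by construction": since the leaves of color $s$ form a non-empty set and their last common ancestors with $x$ all lie on the totally ordered path from $x$ to the root, a $\preceq_T$-minimal one exists and certifies a best match of color $s$, with the two-color sink-free reformulation being a direct translation. No gaps; the chain argument you flag is the right (and only) subtlety.
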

In particular, therefore, BMGs are sink-free whenever they contain at least
two colors.  We formalize this basic property of BMGs for colored graphs in
general:
\begin{definition}
  Let $(G=(V,E),\sigma)$ be a colored graph. The coloring $\sigma$ is
  \emph{sink-free} if it is proper and, for every vertex $x\in V$ and
  every color $s\ne\sigma(x)$ in $\sigma(V)$, there is a vertex $y\in N(x)$
  with $\sigma(y)=s$.  A graph with a sink-free coloring is called
  \emph{sf-colored}.
\end{definition}

Given a tree $T$ and an edge $e$, we denote by $T_e$ the tree obtained from
$T$ by contracting the edge $e$. We call an edge in $(T,\sigma)$
\emph{redundant (w.r.t.\ $(\G,\sigma)$)} if both $(T,\sigma)$ and
$(T_e,\sigma)$ explain $(\G,\sigma)$.
\begin{definition}
  A tree $(T,\sigma)$ is \emph{least resolved} for a BMG $(\G,\sigma)$ if
  (i) it explains $(\G,\sigma)$ and (ii) it does not contain any redundant
  edges w.r.t.\ $(\G,\sigma)$.
  \label{def:LRT}
\end{definition}
\citet[Thm.~8]{Geiss:19a} showed that every BMG has a unique least resolved
tree (LRT).

\citet{Geiss:19a} gave a characterization of BMGs that makes use of
a set of informative triples, which can be defined compactly as
follows \cite{Schaller:20x}:
\begin{definition}\label{def:informative_triples}
  Let $(\G,\sigma)$ be a vertex-colored digraph. Then the set of
  \emph{informative triples} is
  \begin{align*}
    \mathscr{R}(\G,\sigma) &\coloneqq
    \left\{ab|b' \colon
    \sigma(a)\neq\sigma(b)=\sigma(b'),\,
    (a,b)\in E(\G), \text{ and }
    (a,b')\notin E(\G) \right\},
    \intertext{and the set of \emph{forbidden triples} is}
    \mathscr{F}(\G,\sigma) &\coloneqq
    \left\{ab|b' \colon
    \sigma(a)\neq\sigma(b)=\sigma(b'),\,
    b\ne b',\, \text{ and }
    (a,b),(a,b')\in E(\G) \right\}.
    \intertext{For the subclass of BMGs that can be explained by binary 
      trees, we will furthermore need} 
    \Rbin(\G,\sigma) &\coloneqq
    \mathscr{R}(\G,\sigma) \cup \{bb'|a\colon ab|b'\in \mathscr{F}(\G,\sigma)
    \text{ and }\sigma(b)=\sigma(b')\}.
  \end{align*}
\end{definition}
By definition, $a,b,b'$ must be pairwise distinct whenever
$ab|b'\in\mathscr{R}(\G,\sigma)$, $ab|b'\in\mathscr{F}(\G,\sigma)$, or
$ab|b'\in\Rbin(\G,\sigma)$.

We extend the notion of consistency to pairs of triple sets in
\begin{definition}
  Let $\mathscr{R}$ and $\mathscr{F}$ be sets of triples. The pair
  $(\mathscr{R},\mathscr{F})$ is called \emph{consistent} if there is a
  tree $T$ that displays all triples in $\mathscr{R}$ but none of the
  triples in $\mathscr{F}$. In this case, we also say that $T$
  \emph{agrees with} $(\mathscr{R},\mathscr{F})$.
\end{definition}
It can be decided in polynomial time whether such a pair
$(\mathscr{R},\mathscr{F})$ is consistent using the algorithm \texttt{MTT},
which was named after the corresponding \emph{mixed triplets problem
  restricted to trees} and described by \citet{He:06}. 

We continue with two simple observations concerning the restriction of
triple sets.  Since informative and forbidden triples $xy|z$ are only
defined by the presence and absence of arcs in the subgraph of $\G$ induced
by $\{x,y,z\}$, this leads to the following
\begin{fact} \cite{Schaller:20p}
  \label{obs:R-restriction}
  Let $(\G,\sigma)$ be a vertex-colored digraph and $V'\subseteq V(\G)$.
  Then $R(\G,\sigma)_{|V'}=R(\G[V'],\sigma_{|V'})$ holds for every
  $R\in\{\mathscr{R},\mathscr{F}, \Rbin \}$.
\end{fact}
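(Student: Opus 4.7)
The plan is to prove both inclusions by unpacking Definition~\ref{def:informative_triples} and observing that the defining conditions for each type of triple $xy|z$ involve only (i) the colors $\sigma(x),\sigma(y),\sigma(z)$ and (ii) which arcs among the vertex set $\{x,y,z\}$ lie in $E(\G)$. Both pieces of data are preserved under taking an induced subgraph that contains $\{x,y,z\}$, so the statement reduces to carefully matching up definitions. There is no substantive obstacle; the lemma is a locality/bookkeeping statement, and the only thing to be careful about is that $\Rbin$ is defined in terms of both $\mathscr{R}$ and $\mathscr{F}$ and thus has to be treated last.

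First I would handle $R=\mathscr{R}$. Suppose $ab|b'\in \mathscr{R}(\G,\sigma)_{|V'}$. Then by definition of the restriction $a,b,b'\in V'$ and $ab|b'\in \mathscr{R}(\G,\sigma)$, so $\sigma(a)\ne\sigma(b)=\sigma(b')$, $(a,b)\in E(\G)$ and $(a,b')\notin E(\G)$. Since $a,b,b'\in V'$, we have $(a,b)\in E(\G[V'])$ and $(a,b')\notin E(\G[V'])$, and of course $\sigma_{|V'}$ agrees with $\sigma$ on these vertices; thus $ab|b'\in \mathscr{R}(\G[V'],\sigma_{|V'})$. The converse is symmetric: every $ab|b'\in \mathscr{R}(\G[V'],\sigma_{|V'})$ has $a,b,b'\in V'$ by construction, the arc $(a,b)\in E(\G[V'])\subseteq E(\G)$, and the non-arc $(a,b')$ cannot be present in $\G$ either, since $a,b'\in V'$ would otherwise force $(a,b')\in E(\G[V'])$. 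Hence $ab|b'\in\mathscr{R}(\G,\sigma)_{|V'}$. The proof for $R=\mathscr{F}$ is entirely analogous, replacing the condition ``$(a,b')\notin E$'' by ``$(a,b')\in E$''.

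For $R=\Rbin$, I use the already-established identities together with the fact that restriction commutes with unions of triple sets:
\begin{align*}
\Rbin(\G,\sigma)_{|V'}
&=\bigl(\mathscr{R}(\G,\sigma)\cup\{bb'|a\colon ab|b'\in\mathscr{F}(\G,\sigma),\ \sigma(b)=\sigma(b')\}\bigr)_{|V'}\\
&=\mathscr{R}(\G,\sigma)_{|V'}\cup\{bb'|a\colon ab|b'\in\mathscr{F}(\G,\sigma),\ \sigma(b)=\sigma(b'),\ a,b,b'\in V'\}.
\end{align*}
The key point is that $bb'|a$ has leaves in $V'$ if and only if the associated $ab|b'$ does, so the restriction ``pushes through'' the rewrite from $\mathscr{F}$-triples to $\Rbin$-triples. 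Combining this with $\mathscr{R}(\G,\sigma)_{|V'}=\mathscr{R}(\G[V'],\sigma_{|V'})$ and $\mathscr{F}(\G,\sigma)_{|V'}=\mathscr{F}(\G[V'],\sigma_{|V'})$ from the previous paragraph yields $\Rbin(\G,\sigma)_{|V'}=\Rbin(\G[V'],\sigma_{|V'})$, completing the proof.
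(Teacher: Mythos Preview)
Your proof is correct. The paper does not give a formal proof of this observation; it simply cites \cite{Schaller:20p} and remarks that informative and forbidden triples $xy|z$ are determined solely by the presence and absence of arcs in the subgraph induced by $\{x,y,z\}$, which is exactly the locality principle you spell out in detail.
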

Moreover, any pair of triples $(\mathscr{R}',\mathscr{F}')$ such that
$\mathscr{R}'\subseteq\mathscr{R}$ and $\mathscr{F}'\subseteq\mathscr{F}$
for a consistent pair $(\mathscr{R},\mathscr{F})$ remains consistent since
any tree that agrees with $(\mathscr{R},\mathscr{F})$ clearly displays all
triples in $\mathscr{R}'$ and none of the triples in $\mathscr{F}'$.
Hence, we have
\begin{fact}
  \label{obs:R-F-subsets}
  Let $\mathscr{R}'\subseteq\mathscr{R}$ and
  $\mathscr{F}'\subseteq\mathscr{F}$ for a consistent pair of triple sets
  $(\mathscr{R},\mathscr{F})$. Then $(\mathscr{R}',\mathscr{F}')$ is
  consistent.
\end{fact}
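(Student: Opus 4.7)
The plan is simple: reuse the very tree that witnesses consistency of the larger pair. By hypothesis there exists a tree $T$ that agrees with $(\mathscr{R},\mathscr{F})$, i.e.\ $T$ displays every triple in $\mathscr{R}$ and none of the triples in $\mathscr{F}$. I would then argue that, after a harmless restriction to the relevant leaf set, this same $T$ also agrees with $(\mathscr{R}',\mathscr{F}')$.

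First I would set $L' \coloneqq \bigcup_{t\in \mathscr{R}'\cup\mathscr{F}'} L(t)$, which is contained in the leaf set of $T$ because $\mathscr{R}'\cup\mathscr{F}' \subseteq \mathscr{R}\cup\mathscr{F}$. Let $T' \coloneqq T_{L'}$ denote the restriction of $T$ to $L'$, which is well-defined and phylogenetic. Next, I would verify the two required properties separately: (i) every $xy|z \in \mathscr{R}'$ also lies in $\mathscr{R}$ and is therefore displayed by $T$, and since $\{x,y,z\}\subseteq L'$, the triple $xy|z$ is still displayed by the restriction $T'$; (ii) every $xy|z\in \mathscr{F}'$ also lies in $\mathscr{F}$ and hence is \emph{not} displayed by $T$, so by the same restriction argument it cannot be displayed by $T'$ either (passing from $T$ to $T_{L'}$ preserves both the presence and the absence of any triple whose leaves lie in $L'$, since $\lca$-relationships among leaves of $L'$ are inherited by $T_{L'}$).

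Combining (i) and (ii), $T'$ is a tree on leaf set $L'$ that displays every triple in $\mathscr{R}'$ and none in $\mathscr{F}'$, so it agrees with $(\mathscr{R}',\mathscr{F}')$ and witnesses its consistency. There is essentially no obstacle to this proof; the statement is a monotonicity property of consistency under shrinking of both the positive and negative constraint sets. The only minor subtlety is the leaf-set bookkeeping in the restriction step, and that is handled cleanly by the standard fact that restriction of a phylogenetic tree to a subset of its leaves preserves the displayed/non-displayed status of every triple whose leaves survive the restriction.
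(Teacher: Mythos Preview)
Your argument is correct and follows the same idea as the paper: take the tree $T$ that witnesses consistency of $(\mathscr{R},\mathscr{F})$ and observe that it already displays every triple of $\mathscr{R}'\subseteq\mathscr{R}$ and none of $\mathscr{F}'\subseteq\mathscr{F}$. The restriction to $T_{L'}$ is harmless but unnecessary here, since the paper's definition of consistency for a pair $(\mathscr{R},\mathscr{F})$ places no constraint on the leaf set of the witnessing tree; the original $T$ itself already agrees with $(\mathscr{R}',\mathscr{F}')$.
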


We summarize two characterizations of BMGs given in 
\cite[Thm.~15]{BMG-corrigendum} and
\cite[Lemma~3.4 and Thm.~3.5]{Schaller:20y} in the following
\begin{proposition}\label{prop:BMG-charac}
  Let $(\G,\sigma)$ be a properly colored digraph with vertex set $L$.
  Then the following three statements are equivalent:
  \begin{enumerate}
    \item $(\G,\sigma)$ is a BMG.
    \item $\mathscr{R}(\G,\sigma)$ is consistent and
    $\G(\Aho(\mathscr{R}(\G,\sigma),L), \sigma) = (\G,\sigma)$.
    \item $(\G,\sigma)$ is sf-colored and
    $(\mathscr{R}(\G,\sigma),\mathscr{F}(\G,\sigma))$ is consistent.
  \end{enumerate}
  In this case, $(\Aho(\mathscr{R}(\G,\sigma),L),\sigma)$ is the unique
  LRT for $(\G,\sigma)$, and a leaf-colored tree
  $(T,\sigma)$ on $L$ explains $(\G,\sigma)$ if and only if it agrees with
  $(\mathscr{R}(\G,\sigma), \mathscr{F}(\G,\sigma))$.
\end{proposition}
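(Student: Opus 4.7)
The plan is to prove $(1)\Rightarrow(3)\Rightarrow(2)\Rightarrow(1)$, closing the cycle of implications. The direction $(2)\Rightarrow(1)$ is immediate from Definition~\ref{def:BestMatchGraph}, since item~(2) exhibits $(\G,\sigma)$ as the BMG of $(\Aho(\mathscr{R}(\G,\sigma),L),\sigma)$. For $(1)\Rightarrow(3)$, I would fix a leaf-colored tree $(T,\sigma)$ that explains $(\G,\sigma)$. The sf-coloring is exactly Observation~\ref{fact:allcolors-out}. To see that $T$ agrees with $(\mathscr{R}(\G,\sigma),\mathscr{F}(\G,\sigma))$, I would simply unfold the definitions: for $ab|b'\in\mathscr{R}(\G,\sigma)$ the conditions $(a,b)\in E(\G)$, $(a,b')\notin E(\G)$, and $\sigma(b)=\sigma(b')$ force $\lca_T(a,b)\prec_T\lca_T(a,b')$ via Definition~\ref{def:BMG}, so $T$ displays $ab|b'$; for $ab|b'\in\mathscr{F}(\G,\sigma)$ both $b$ and $b'$ are best matches of $a$ in $T$, so $\lca_T(a,b)=\lca_T(a,b')$ and $T$ does not display $ab|b'$.

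The heart of the argument is $(3)\Rightarrow(2)$. Observation~\ref{obs:R-F-subsets} makes $\mathscr{R}(\G,\sigma)$ consistent, so $\hat T\coloneqq\Aho(\mathscr{R}(\G,\sigma),L)$ exists and displays every triple in $\mathscr{R}(\G,\sigma)$. The key technical fact I would invoke is the standard property of the Aho construction: every tree displaying $\mathscr{R}(\G,\sigma)$ is a refinement of $\hat T$. Applied to any tree $T^*$ that agrees with $(\mathscr{R}(\G,\sigma),\mathscr{F}(\G,\sigma))$, this forces $\hat T$ to display none of the forbidden triples either, since otherwise the refinement $T^*$ would display that triple too, contradicting its agreement with $\mathscr{F}(\G,\sigma)$. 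It remains to verify $\G(\hat T,\sigma)=(\G,\sigma)$. For $(x,y)\in E(\G)$, suppose for contradiction that $y$ is not a best match of $x$ in $\hat T$; then some $y'$ with $\sigma(y')=\sigma(y)$ satisfies $\lca_{\hat T}(x,y')\prec_{\hat T}\lca_{\hat T}(x,y)$, so $\hat T$ displays $xy'|y$. If $(x,y')\in E(\G)$, the definition of $\mathscr{F}$ yields $xy'|y\in\mathscr{F}(\G,\sigma)$, contradicting agreement; if $(x,y')\notin E(\G)$, then $xy|y'\in\mathscr{R}(\G,\sigma)$ is displayed by $\hat T$, which is incompatible with $\hat T$ displaying $xy'|y$. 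Conversely, if $(x,y)\notin E(\G)$ with $\sigma(x)\neq\sigma(y)$, the sf-coloring furnishes $y'$ with $(x,y')\in E(\G)$ and $\sigma(y')=\sigma(y)$; then $xy'|y\in\mathscr{R}(\G,\sigma)$ is displayed by $\hat T$, so $y$ is not a best match of $x$ in $\hat T$.

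The hard part is verifying that $\hat T$ avoids $\mathscr{F}(\G,\sigma)$, which rests on the minimality property of the Aho tree; the remainder is careful bookkeeping over the three rooted triples on each relevant triangle of vertices. The supplementary claims then drop out. Re-running the arguments for $(1)\Rightarrow(3)$ and the second half of $(3)\Rightarrow(2)$ with an arbitrary tree $(T,\sigma)$ in place of $\hat T$ characterizes the explaining trees of $(\G,\sigma)$ as precisely those that agree with $(\mathscr{R}(\G,\sigma),\mathscr{F}(\G,\sigma))$. Combining this with the uniqueness of the LRT from \cite[Thm.~8]{Geiss:19a} and the observation that every explaining tree is a refinement of $\hat T$ then identifies $\hat T$ as that unique LRT.
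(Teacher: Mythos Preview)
Your proof is correct, but there is nothing in the paper to compare it against: Proposition~\ref{prop:BMG-charac} is not proved in this paper. It is explicitly introduced as a summary of results quoted from \cite[Thm.~15]{BMG-corrigendum} and \cite[Lemma~3.4 and Thm.~3.5]{Schaller:20y}, and the paper simply uses it as a black box.

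That said, your argument stands on its own. The cycle $(1)\Rightarrow(3)\Rightarrow(2)\Rightarrow(1)$ is sound, and the crucial step---showing that $\hat T=\Aho(\mathscr{R}(\G,\sigma),L)$ avoids every forbidden triple---is handled cleanly via the minimality property of the \texttt{BUILD} tree (every tree on $L$ displaying $\mathscr{R}(\G,\sigma)$ refines $\hat T$, so a forbidden triple displayed by $\hat T$ would propagate to the witness $T^*$). The verification of $\G(\hat T,\sigma)=(\G,\sigma)$ by the three-case analysis on the possible triples over $\{x,y,y'\}$ is exactly the right bookkeeping, and your derivation of the supplementary claims (the characterization of explaining trees, and the identification of $\hat T$ as the unique LRT by combining the refinement property with \cite[Thm.~8]{Geiss:19a}) is correct as well. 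One small point worth making explicit in a polished write-up: when you argue that $\hat T$ has no redundant edge, you are implicitly using that the contraction $\hat T_e$ would be an explaining tree that fails to refine $\hat T$, contradicting the refinement property; this is straightforward but deserves a sentence.
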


\begin{figure}[htb]
  \centering
  \includegraphics[width=0.85\textwidth]{./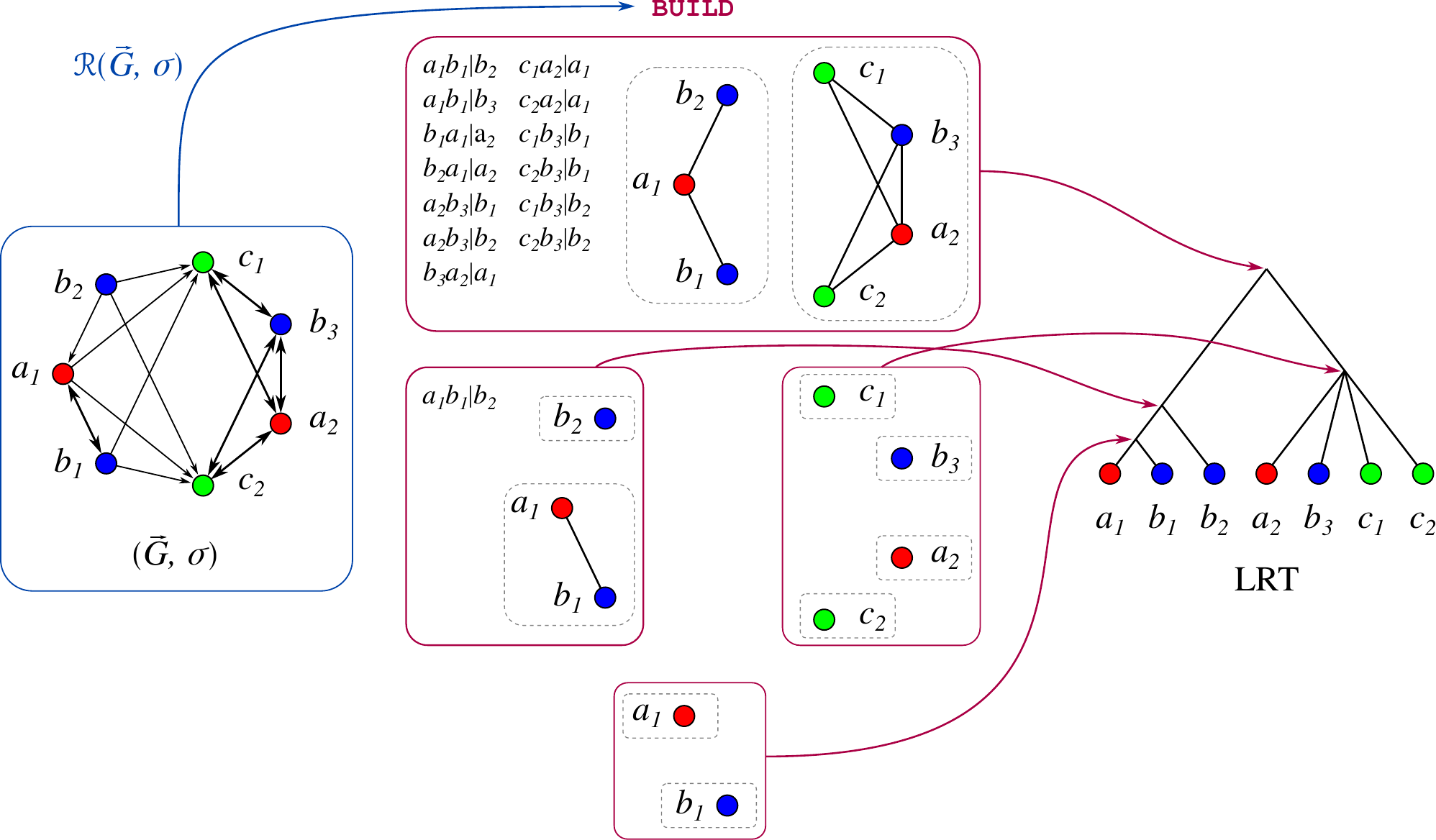}
  \caption[]{Construction of the LRT for a BMG $(\G,\sigma)$ using the
    \texttt{BUILD} algorithm. Each recursion step (pink boxes) corresponds
    to a vertex of the resulting tree (trivial steps on single vertices are
    omitted in the drawing).  The algorithm recurses on the connected
    components (gray dashed boxes) of the Aho graphs and the corresponding
    subsets of triples.}
  \label{fig:bmg-aho-example}
\end{figure}
Prop.~\ref{prop:BMG-charac} states that the set of informative triples
$\mathscr{R}(\G,\sigma)$ of a BMG $(\G,\sigma)$ is consistent. Therefore,
it can be used to construct its LRT by means of the \texttt{BUILD}
algorithm, see Fig.~\ref{fig:bmg-aho-example} for an example.
\begin{figure}[htb]
  \centering
  \includegraphics[width=0.85\textwidth]{./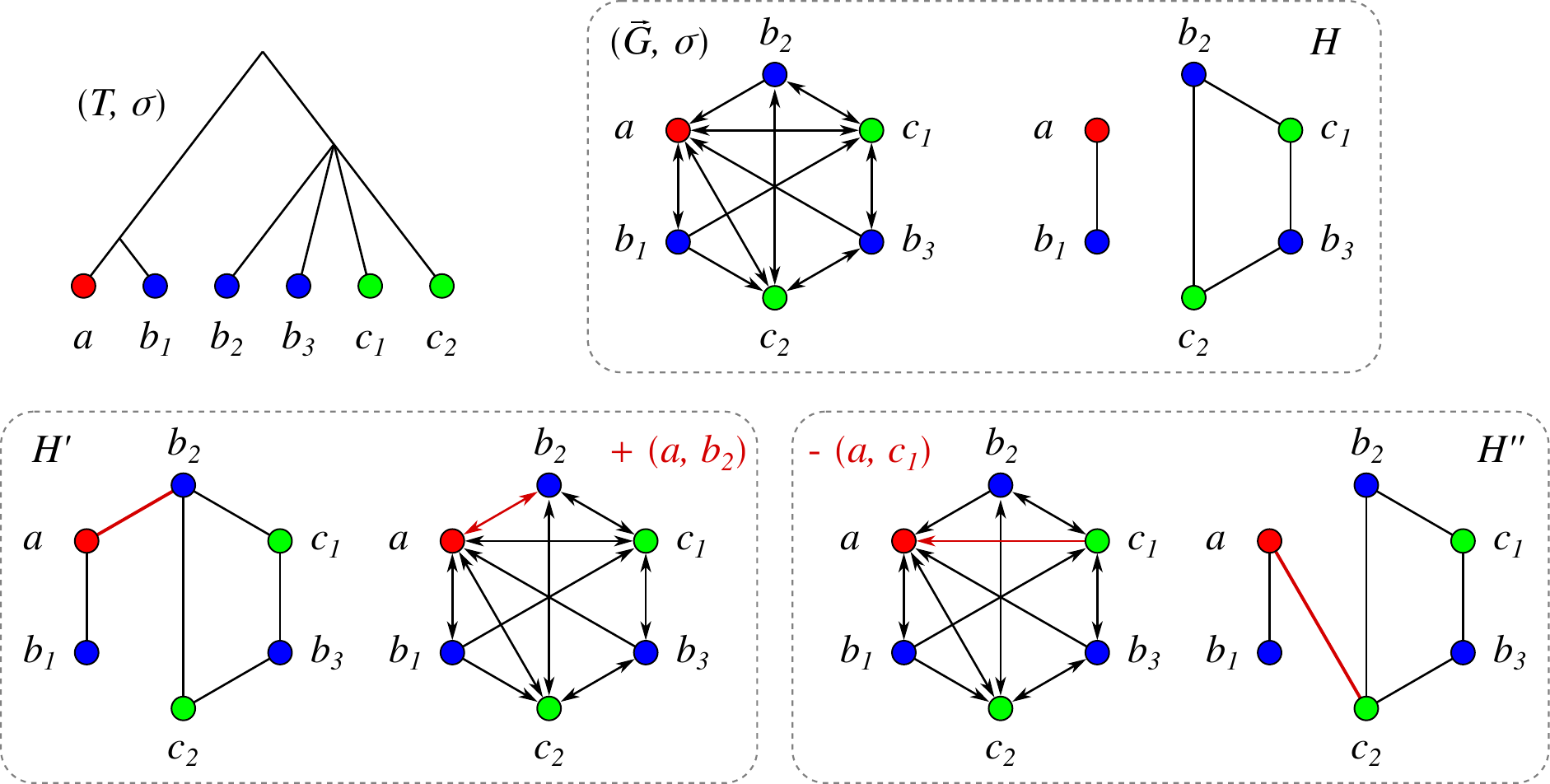}
  \caption[]{Both arc insertions and deletions into a BMG
    $(\G=(V,E),\sigma)$ can introduce inconsistencies into the set of
    informative triples.  Top row: Leaf-colored tree $(T,\sigma)$ explaining
    the BMG $(\G,\sigma)$.  Its set of informative triples is
    $\mathscr{R}(\G,\sigma)=\{ab_1|b_2,\, ab_1|b_3,\, c_1b_2|b_1,\,
    c_1b_3|b_1,\, c_2b_2|b_1,\, c_2b_3|b_1\}$ giving the Aho graph
    $H=[\mathscr{R}(\G,\sigma), V]$.  Bottom left: Insertion of the arc
    $(a, b_2)$ creates a new informative triple $ab_2|b_3$ ($ab_1|b_2$ gets
    lost) resulting in a connected Aho graph $H'$.  Bottom right: Deletion
    of the arc $(a, c_1)$ creates a new triple $ac_2|c_1$ resulting in a
    connected Aho graph $H''$.}
  \label{fig:introduce-inconsistency}
\end{figure}
It is important to note that both arc insertions and deletions
may lead to creation and loss of informative triples.  In particular, when
starting from a BMG, both types of modifications have the potential to make
the triple set inconsistent as the example in
Fig.~\ref{fig:introduce-inconsistency} shows.  This is indeed often the
case even for moderate disturbances of a BMG as we shall see in
Sec.~\ref{sec:heur-computational-exp}.

We expect that empirically estimated best match relations will typically
contain errors that correspond to both arc insertions and deletions w.r.t.\
the unknown underlying ``true'' best match graph.  This motivates the
problem of editing a given vertex-colored digraph to a BMG:

\begin{problem}[\PROBLEM{$\ell$-BMG Editing}]\ \\
  \label{prblm:ell-bmg-edit}
  \begin{tabular}{ll}
    \emph{Input:}    & A properly $\ell$-colored digraph $(\G =(V,E),\sigma)$
    and an integer $k$.\\
    \emph{Question:} & Is there a subset $F\subseteq V\times V \setminus
    \{(v,v)\mid v\in V\}$ such
    that $|F|\leq k$ and\\ & $(\G\symdiff F,\sigma)$
    is an $\ell$-BMG?
  \end{tabular}
\end{problem}
Natural variants are \PROBLEM{$\ell$-BMG Completion} and
\PROBLEM{$\ell$-BMG Deletion} where $\G\symdiff F$ is replaced by
$\G+F$ and $\G-F$, respectively, i.e., only addition or
deletion of arcs is allowed. Both \PROBLEM{$\ell$-BMG Editing} and its
variants are NP-complete \cite{Schaller:20y}.

The heuristic algorithms considered in this contribution can be thought of
as maps $\mathbb{A}$ on the set of finite vertex-colored digraphs such that
$\mathbb{A}(\G,\sigma)$ is a BMG for every vertex-colored input graph
$(\G,\sigma)$.  In particular, the following property of such algorithms is
desirable:
\begin{definition}
  A (BMG-editing) algorithm is \emph{consistent} if
  $\mathbb{A}(\G,\sigma)=(\G,\sigma)$ whenever $(\G,\sigma)$ is a BMG.
\end{definition}

\section{A simple, triple-based heuristic}
\label{sect:simple}

The triple-based characterization summarized by Prop.~\ref{prop:BMG-charac}
suggests a simple heuristic for BMG editing that relies on replacing the
consistency checks for triple sets by the extraction of maximal sets of
consistent triples (see Alg.~\ref{alg:simple}). Both \PROBLEM{MaxRTC}, the
problem of extracting from a given set $\mathscr{R}$ of rooted triples a
maximum-size consistent subset, and \PROBLEM{MinRTI}, the problem of
finding a minimum-size subset $\mathscr{I}$ such that
$\mathscr{R}\setminus\mathscr{I}$ is consistent, are NP-hard
\cite{Jansson:01}.  Furthermore, \PROBLEM{MaxRTC} is APX-hard and
\PROBLEM{MinRTI} is $\Omega(\ln n)$-inapproximable
\cite{Byrka:10}. However, because of their practical importance in
phylogenetics, a large number of practically useful heuristics have been
devised, see e.g.\ \cite{Gasieniec:99,Wu:04,Tazehkand:13}.

\begin{algorithm}
  \caption{Simple Heuristic for BMG editing.}
  \label{alg:simple}
  \KwIn{Properly colored digraph $(\G,\sigma)$}
  \KwOut{BMG $(\G^*,\sigma)$}
  $\mathscr{R}^*\leftarrow
  \PROBLEM{MaxRTC}(\mathscr{R}(\G,\sigma))$\\
  \Return{$\G(\Aho(\mathscr{R}^*, V(\G)),\sigma)$}
\end{algorithm}

As a consequence of Prop.~\ref{prop:BMG-charac}, Alg.~\ref{alg:simple}
is consistent, i.e., $(\G^*,\sigma)=(\G,\sigma)$ if and only if the input
graph $(G,\sigma)$ is a BMG, if a consistent heuristic is
employed to solve \PROBLEM{MaxRTG}/\PROBLEM{MinRTI}, i.e., if consistent
triple sets remain unchanged by the method approximating
\PROBLEM{MaxRTG}/\PROBLEM{MinRTI}.

\begin{figure}[htb]
  \centering
  \includegraphics[width=0.65\textwidth]{./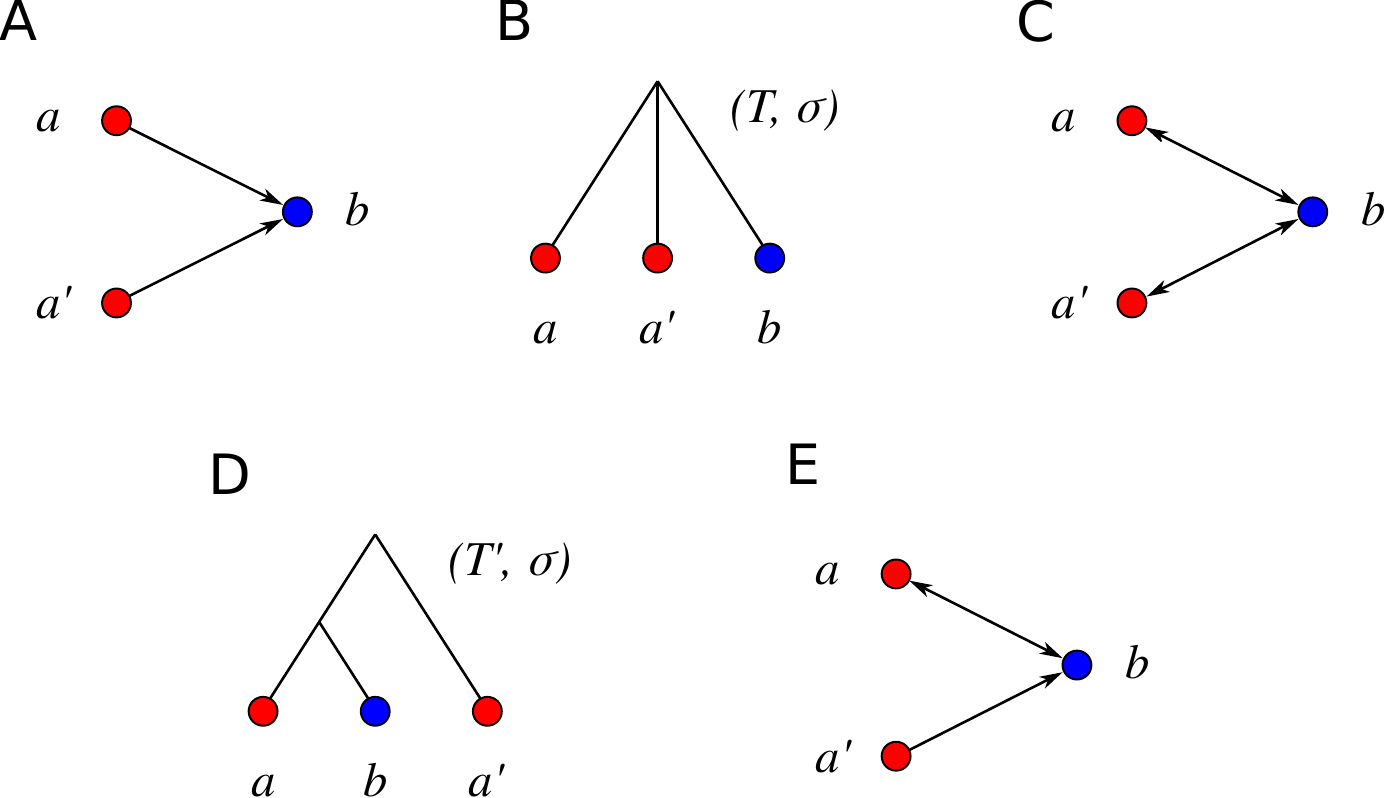}
  \caption[]{Example for a graph (A) where Alg.~\ref{alg:simple} does not
    lead to an optimal BMG editing.  The set $\mathscr{R}(\G,\sigma)$ is
    empty and thus consistent.  (B) The tree
    $T=\Aho(\mathscr{R}(\G,\sigma), V(\G))$ and (C) its corresponding BMG. The 
    two arcs $(b,a)$ and $(b,a')$ have been inserted.  (D) A tree $(T',\sigma)$
    and (E) its corresponding BMG $\G(T',\sigma)$ in which only the arc
    $(b,a)$ has been inserted.}
  \label{fig:heuristic_not_optimal}
\end{figure}

The heuristic Alg.~\ref{alg:simple} is not always optimal, even if
\PROBLEM{MaxRTC}/\PROBLEM{MinRTI} is solved optimally.
Fig.~\ref{fig:heuristic_not_optimal} shows an unconnected 2-colored graph
$(\G,\sigma)$ on three vertices that is not a BMG and does not contain
informative triples. The BMG $(\G^*,\sigma)$ produced by
Alg.~\ref{alg:simple} introduces two arcs into $(\G,\sigma)$. However,
$(\G,\sigma)$ can also be edited to a BMG by inserting only one arc.

A simple improvement is to start by enforcing obvious arcs: If $v$ is the
only vertex with color $\sigma(v)$, then by definition there must be an arc
$(x,v)$ for every vertex $x\ne v$. The computation then starts from the
sets of informative triples of the modified graph. We shall see below that
these are the only arcs that can safely be added to $\G$ without other
additional knowledge or constraints (cf.\ Thm.~\ref{thm:trivial-unsat-rel}
below).

\section{Locally optimal splits}
\label{sect:UR}

Finding an optimal BMG editing of a digraph $(\G=(V,E),\sigma)$ is equivalent 
to finding a tree $(T,\sigma)$ on $V$ that minimizes the cardinality of
\begin{equation}
  \begin{split}
    U(\G,T) \coloneqq \{ (x,y)\in V\times V \mid\ &
    (x,y)\in E \textrm{ and } (x,y)\notin E(\G(T,\sigma))
    \text{, or} \\
    & (x,y)\notin E \textrm{ and } (x,y)\in E(\G(T,\sigma)) \}.
  \end{split}
\end{equation}
Clearly, $U(\G,T) =\emptyset$ implies that $(\G,\sigma) = \G(T,\sigma)$ is
a BMG.  However, finding a tree $(T,\sigma)$ that minimizes $|U(\G,T)|$ is
intractable (unless $P=NP$) since \PROBLEM{$\ell$-BMG Editing},
Problem~\ref{prblm:ell-bmg-edit} above, is NP-complete
\cite{Schaller:20y}.

We may ask, nevertheless, if trees $(T,\sigma)$ on $V$ contain information
about arcs and non-arcs in $(\G,\sigma)$ that are ``unambiguously false''
in the sense that they are contained in every edit set that converts
$(\G,\sigma)$ into a BMG. Denote by $\mathscr{T}_V$ the set of all
phylogenetic trees on $V$. The set of these ``unambiguously false''
\hbox{(non-)}arcs can then be expressed as
\begin{equation}
  \label{eq:U-unambiguous}
  U^*(\G)\coloneqq \bigcap_{T\in \mathscr{T}_V} U(\G,T).
\end{equation}
Since there are in general exponentially many trees on $V$ and thus, the
problem of determining $U^*(\G)$ seems to be quite challenging on a first
glance. We shall see in Thm.~\ref{thm:trivial-unsat-rel}, however, that
$U^*(\G)$ can be computed efficiently. We start with a conceptually
simpler construction.

\begin{definition}
  \label{def:unsat-relations}
  Let $(\G=(V,E),\sigma)$ be a properly vertex-colored digraph and
  $\mathscr{V}$ a partition of $V$ with $|\mathscr{V}|\ge2$.  Moreover, let
  $\mathscr{T}(\mathscr{V})$ be the set of trees $T$ on $V$ that satisfy
  $\mathscr{V} = \{L(T(v)) \mid v\in\child_{T}(\rho_T) \}$.  The set of
  \emph{unsatisfiable relations} (\ur), denoted by $U(\G,\mathscr{V})$, is 
  defined as
  \begin{equation}
    \label{eq:U(G,V)}
    U(\G,\mathscr{V}) \coloneqq \bigcap_{T\in \mathscr{T}(\mathscr{V})} 
    U(\G,T).
  \end{equation}
  The associated \ur-cost is $c(\G,\mathscr{V})\coloneqq |U(\G,\mathscr{V})|$.
\end{definition}

The set of (phylogenetic) trees $\mathscr{T}(\mathscr{V})$ is non-empty
since $|\mathscr{V}|\ge2$ in Def.~\ref{def:unsat-relations}.  Moreover, by
construction, $(x,y) \in U(\G,\mathscr{V})$ if and only if
\begin{align*}
  & (x,y)\in E \textrm{ and } (x,y)\notin E(\G(T,\sigma)) \textrm{ for all } 
  T\in \mathscr{T}(\mathscr{V}) \textrm{, or}\\
  & (x,y)\notin E \textrm{ and } (x,y)\in E(\G(T,\sigma)) \textrm{ for all } 
  T\in \mathscr{T}(\mathscr{V}).
\end{align*}

Intriguingly, the set $U(\G,\mathscr{V})$, and thus the \ur-cost
$c(\G,\mathscr{V})$, can be computed in polynomial time without any
explicit knowledge of the possible trees to determine the set
$U(\G,\mathscr{V})$. To this end, we define the three sets
\begin{align*}
  U_1(\G,\mathscr{V}) &= \bigcup_{V_{i}\in\mathscr{V}} \{(x,y) \mid
  (x,y)\in E,\; x\in V_{i},\; 
  y\in V\setminus V_{i},\;
  \sigma(y)\in \sigma(V_{i})\},\\
  U_2(\G,\mathscr{V}) &= \bigcup_{V_{i}\in\mathscr{V}} \{(x,y) \mid
  (x,y)\notin E,\; x\in V_{i},\; 
  y\in V\setminus V_{i},\;
  \sigma(y)\notin \sigma(V_{i})\}, \textrm{ and}\\
  U_3(\G,\mathscr{V}) &= \bigcup_{V_{i}\in\mathscr{V}} \{(x,y) \mid
  (x,y)\notin E,\; \textrm{distinct }x,y\in V_{i},\;
  V_{i}[\sigma(y)]=\{y\}\}.
\end{align*}
\begin{lemma}
  \label{lem:unsat-rel-charac}
  Let $(\G=(V,E),\sigma)$ be a properly vertex-colored digraph and let
  $\mathscr{V}=\{V_1,\dots,V_k\}$ be a partition of $V$ with
  $|\mathscr{V}|=k\ge2$.  Then
  \begin{equation*}
    U(\G,\mathscr{V}) = U_1(\G,\mathscr{V}) \;\cupdot\;
    U_2(\G,\mathscr{V}) \;\cupdot\; U_3(\G,\mathscr{V})\,.
  \end{equation*}
\end{lemma}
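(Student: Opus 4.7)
}
My plan is a two-sided inclusion together with a quick check of disjointness. The disjointness of $U_1, U_2, U_3$ is immediate from the defining conditions: $U_1$ consists of arcs while $U_2, U_3$ consist of non-arcs, and $U_2$ involves pairs in different blocks of $\mathscr{V}$ whereas $U_3$ involves pairs in the same block. So the $\cupdot$ notation is justified, and it only remains to prove set equality with $U(\G,\mathscr{V})$.

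For the inclusion $U_1 \cupdot U_2 \cupdot U_3 \subseteq U(\G,\mathscr{V})$, I would fix an arbitrary $T \in \mathscr{T}(\mathscr{V})$ and verify, for each of the three types, that the arc/non-arc status is forced to disagree with $\G(T,\sigma)$. Concretely, if $(x,y)\in U_1$ with $x \in V_i$, $y \notin V_i$, and some $y' \in V_i$ satisfying $\sigma(y')=\sigma(y)$, then in any such $T$ one has $\lca_T(x,y')\prec_T \rho_T = \lca_T(x,y)$, so $y$ cannot be a best match of $x$ and thus $(x,y)\notin E(\G(T,\sigma))$. For $(x,y)\in U_2$, no vertex in $V_i$ carries color $\sigma(y)$, hence every vertex of color $\sigma(y)$ lies outside $V_i$ and has $\rho_T$ as its $\lca$ with $x$; so $y$ is a best match, forcing $(x,y)\in E(\G(T,\sigma))$. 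For $(x,y)\in U_3$, the vertex $y$ is the unique vertex of color $\sigma(y)$ in $V_i$, and any other vertex of that color is outside $V_i$ with $\lca_T$ to $x$ equal to $\rho_T$, strictly above $\lca_T(x,y)$; so again $y$ is a best match and $(x,y)\in E(\G(T,\sigma))$. In all three cases the disagreement is independent of $T\in\mathscr{T}(\mathscr{V})$, placing $(x,y)$ in $U(\G,\mathscr{V})$.

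For the reverse inclusion, I would use the contrapositive: given $(x,y)$ not in $U_1 \cup U_2 \cup U_3$, exhibit some $T\in\mathscr{T}(\mathscr{V})$ whose BMG agrees with $\G$ on $(x,y)$. Split into cases by whether $x,y$ lie in the same block of $\mathscr{V}$. If $x\in V_i$ and $y\in V_j$ with $i\ne j$, then for \emph{every} $T\in\mathscr{T}(\mathscr{V})$ one has $\lca_T(x,y)=\rho_T$, and the outcome in $\G(T,\sigma)$ is determined purely by whether $\sigma(y)\in\sigma(V_i)$ or not; by assumption the arc status of $(x,y)$ already agrees with this, so any $T$ works. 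The genuinely interesting case is $x,y\in V_i$: here I would construct $T$ by picking any tree on $V_i$ in which (if $(x,y)\notin E$) some witness $y'\in V_i\setminus\{y\}$ with $\sigma(y')=\sigma(y)$ — which exists because $(x,y)\notin U_3$ — is placed strictly ``closer'' to $x$ than $y$ (e.g., make $\{x,y'\}$ a cherry and attach $y$ higher), and (if $(x,y)\in E$) make $x,y$ siblings so that no same-colored competitor inside $V_i$ can beat $y$. Attaching the remaining blocks $V_j$, $j\ne i$, as sibling subtrees of the root then yields the required $T\in\mathscr{T}(\mathscr{V})$.

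I expect the mild obstacle is bookkeeping in the same-block case, since one must verify that the best-match relation inside $V_i$ between $x$ and $y$ is decided entirely within the subtree on $V_i$ (because other same-colored vertices outside $V_i$ always have $\rho_T$ as their $\lca$ with $x$, so never beat any vertex inside $V_i$). Once this is observed, the explicit construction of the internal tree as above delivers the desired tree and completes the equality.
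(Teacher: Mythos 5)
Your proposal is correct and takes essentially the same route as the paper: the forward inclusion is checked case-by-case against an arbitrary tree in $\mathscr{T}(\mathscr{V})$, and your contrapositive treatment of the reverse inclusion uses exactly the cherry-type tree constructions (pairing $x$ with $y$, or with a same-colored witness $y'$ placed below $y$) that the paper employs inside its contradiction arguments. The only point left implicit is the degenerate case $\sigma(x)=\sigma(y)$, where your $U_3$-witness could be $x$ itself; this is harmless because such pairs are never arcs in $\G$ or in any $\G(T,\sigma)$, and the paper dispatches it by observing up front that all pairs in either set have distinct colors.
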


\begin{figure}[htb]
  \centering
  \includegraphics[width=0.75\textwidth]{./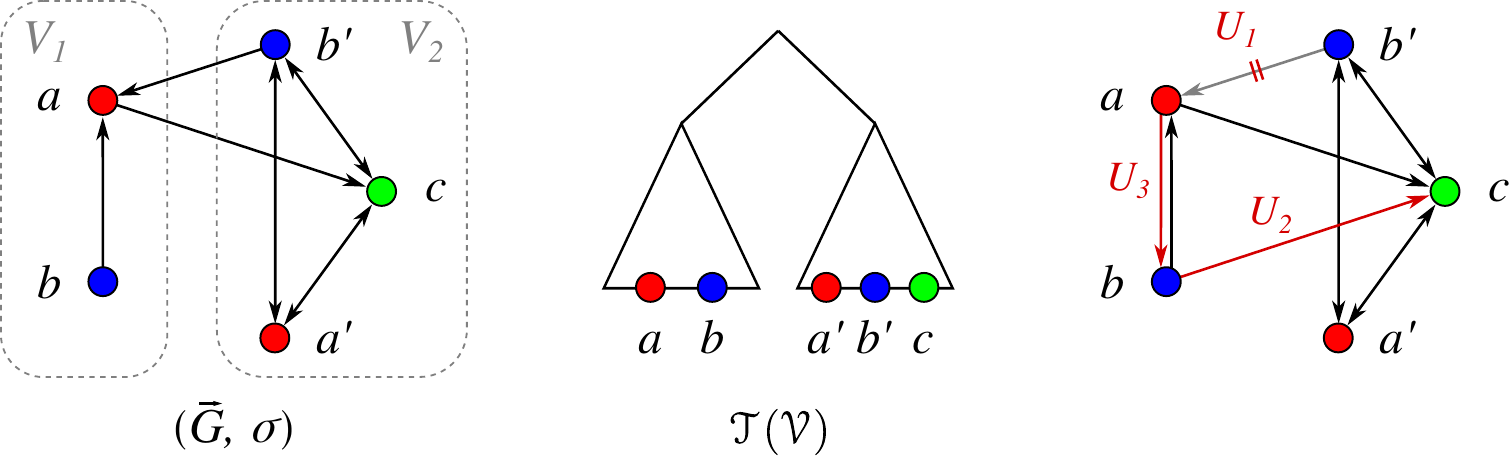}
  \caption[]{Example for unsatisfiable relations $U(\G,\mathscr{V})$ of a
    vertex-colored digraph $(\G=(V,E),\sigma)$ w.r.t.\ a partition
    $\mathscr{V}=\{V_1,V_2\}$ (indicated by the gray boxes).  In the
    middle, the set of trees $\mathscr{T}(\mathscr{V})$ is illustrated,
    i.e., the triangles represent all possible phylogenetic trees on the
    respective subset of leaves.  On the right, the arc modifications
    implied by $\mathscr{V}$ (i.e., $U(\G,\mathscr{V})$) are illustrated
    where $U_1$, $U_2$, and $U_3$ indicate the type according to
    Lemma~\ref{lem:unsat-rel-charac}.}
  \label{fig:unsat-rel-example}
\end{figure}

The proof of Lemma~\ref{lem:unsat-rel-charac} relates the possible cases
between $\mathscr{V}$ and the tree set $\mathscr{T}(\mathscr{V})$ in a
straightforward manner. Since it is rather lengthy it is relegated to
Appendix.  Fig.~\ref{fig:unsat-rel-example} gives examples for all three
types of unsatisfiable relations, i.e., for $U_1(\G,\mathscr{V})$,
$U_2(\G,\mathscr{V})$, and $U_3(\G,\mathscr{V})$.  In particular, we have
$(b', a)\in U_1(\G,\mathscr{V})$ since it is an arc in $\G$ but $V_2$
contains another red vertex $a'$. Moreover, $(b,c)\in U_2(\G,\mathscr{V})$
since it is not an arc in $\G$ but $V_1$ does not contain another green
vertex.  Finally, we have $(a,b)\in U_3(\G,\mathscr{V})$ since it is not an
arc in $\G$ but $b$ is the only blue vertex in $V_1$.  In the example, the
graph $(\G\symdiff U(\G,\mathscr{V}))$ is already a BMG which, however, is
not true in general.

\begin{corollary}
  \label{cor:U-polytime}
  The set $U(\G,\mathscr{V})$ can be computed in quadratic time.
\end{corollary}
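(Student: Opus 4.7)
The plan is to reduce the computation directly to the explicit characterization in Lemma~\ref{lem:unsat-rel-charac}, which expresses $U(\G,\mathscr{V})$ as the disjoint union $U_1(\G,\mathscr{V}) \cupdot U_2(\G,\mathscr{V}) \cupdot U_3(\G,\mathscr{V})$. Each of these three sets is defined by a local predicate on an ordered pair $(x,y)$ that, given suitable precomputed data, can be evaluated in constant time. Hence the proof reduces to specifying an $O(|V|^2)$-time evaluation of those predicates over all ordered pairs $(x,y)\in V\times V$ with $x\ne y$.

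First I would carry out a linear-time preprocessing phase. A single pass over the blocks of $\mathscr{V}$ produces the map $\beta:V\to\{1,\dots,k\}$ that sends each vertex to the index of its block in $O(|V|)$ time. A second pass, traversing every block $V_i$ and, for each $v\in V_i$, incrementing a counter $n_i(\sigma(v))$, yields tables that return, in $O(1)$, the value $n_i(s)=|V_i[s]|$ for any pair $(i,s)$. Stored sparsely (e.g.\ as hash maps keyed only by the pairs $(i,s)$ that actually occur), these tables use $O(|V|)$ space and are built in $O(|V|)$ total time.

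Next I would iterate once over all $|V|(|V|-1)$ ordered pairs $(x,y)$ with $x\ne y$. Using an adjacency-matrix or hash-based representation of $\G$, testing whether $(x,y)\in E$ is $O(1)$. Combining this with $\beta$ and the counters $n_i$, the three predicates of Lemma~\ref{lem:unsat-rel-charac} become:
$(x,y)\in U_1$ iff $(x,y)\in E$, $\beta(x)\ne\beta(y)$, and $n_{\beta(x)}(\sigma(y))\ge 1$;
$(x,y)\in U_2$ iff $(x,y)\notin E$, $\beta(x)\ne\beta(y)$, and $n_{\beta(x)}(\sigma(y))=0$;
$(x,y)\in U_3$ iff $(x,y)\notin E$, $\beta(x)=\beta(y)$, and $n_{\beta(x)}(\sigma(y))=1$ (noting that $y$ itself is counted, forcing it to be the unique vertex of colour $\sigma(y)$ in its block). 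Each test is $O(1)$, and the disjointness from the lemma means classifying pairs into the three sets involves no deduplication. Thus the total running time of the enumeration is $O(|V|^2)$, dominating the preprocessing.

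There is essentially no real obstacle: the work is pure bookkeeping once Lemma~\ref{lem:unsat-rel-charac} has provided the local characterization. The only subtlety worth flagging is storing the block/colour tables sparsely, so that no initialisation cost of the form $O(k\cdot|\sigma(V)|)$ leaks into the bound; a dense two-dimensional array would work only if $k\cdot|\sigma(V)|=O(|V|^2)$, which is always true but should be argued explicitly. Since any adjacency representation of $\G$ already needs $\Omega(|V|^2)$ worst-case space to answer arc queries in $O(1)$, the stated quadratic bound is also best possible up to constants.
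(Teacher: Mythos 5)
Your proposal is correct and follows essentially the same route as the paper's proof: precompute the per-block color counts $n_{i,\sigma(y)}$ in linear time, then decide membership in $U_1$, $U_2$, $U_3$ (via Lemma~\ref{lem:unsat-rel-charac}) in constant time per ordered pair, giving $O(|V|^2)$ overall. The extra details you add (block-index map, sparse storage, constant-time arc queries) are just a more explicit account of the same bookkeeping.
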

\begin{proof}
  We first compute all numbers $n_{i,A}$ of vertices in $V_i$ with a given
  color $A$. This can be done in $O(|V|)$ if we do not explicitly store the
  zero-entries. Now, $\sigma(y)\in\sigma(V_i)$, i.e.\ $n_{i,\sigma(y)}>0$,
  can be checked in constant time, and thus, it can also be decided in
  constant time whether or not a pair $(x,y)$ is contained in
  $U_1(\G,\mathscr{V})$ or $U_2(\G,\mathscr{V})$.  Since, given $y\in V_i$,
  the condition $V_i[\sigma(y)]=\{y\}$ is equivalent to
  $n_{i,\sigma(y)}=1$, membership in $U_3(\G,\mathscr{V})$ can also be
  decided in constant time.  Checking all ordered pairs $x,y\in V$ thus
  requires a total effort of $O(|V|^2)$.
  \qed
\end{proof}

\begin{algorithm}[t]
  \caption{General BMG editing via locally optimal steps.\newline The parts
    highlighted in color produce a tree $T$ explaining the edited graph
    $(\vec G^*,\sigma)$. If the tree is not needed, these steps can be
    omitted. The method for choosing the partition $\mathscr{V}$ (framed
    box) determines different variants of the algorithm.}
  \label{alg:general-local-optimal}
  
  \newcommand{\treeline}[1]{\begingroup\color{blue}#1\endgroup}
  
  \DontPrintSemicolon
  \SetKwFunction{FRecurs}{void FnRecursive}%
  \SetKwFunction{FRecurs}{Edit}
  \SetKwProg{Fn}{Function}{}{}
  
  \KwIn{Properly colored digraph $(\G=(V, E),\sigma)$}
  \KwOut{BMG $(\G^*,\sigma)$.}
  
  \BlankLine
  
  initialize $(\G^*,\sigma) \leftarrow (\G,\sigma)$ \label{line:init-G*}\;
  
  \BlankLine
  \Fn{\FRecurs{$V'$}}{
    
    \uIf{$|V'|>1$}{
      $\mathscr{V} \leftarrow$
      \framebox{suitably chosen partition of $V'$ with $|\mathscr{V}|\ge2$}
      \label{line:min-cost}\;
      $\G^{*} \leftarrow \G^{*} \, \triangle \,
      U(\G^{*}[V'],\mathscr{V})$ \label{line:apply-edits} \;
      \treeline{create a tree $T'$ with root $\rho'$\;}
      \ForEach{$V_i \in \mathscr{V}$}{
        \treeline{\SetNlSty{bfseries}{\color{black}}{} attach the tree} 
        \FRecurs{$V_i$} \treeline{to $\rho'$}\;
      }
      \treeline{\Return $T'$\;}
    }
    \treeline{
      \Else{
        \Return a tree with the single element in $V'$ as root\;
    }}
    
  }
  \BlankLine
  
  \treeline{\SetNlSty{bfseries}{\color{black}}{} $T \leftarrow$} 
  \FRecurs{$V(\G)$}\;
  \Return  $(\G^*,\sigma)$ \treeline{ and $T$} \label{line:return-edited}\;
\end{algorithm}

Our discussion so far suggests a recursive top-down approach, made precise
in Alg.~\ref{alg:general-local-optimal}.  In each step, one determines a
``suitably chosen'' partition $\mathscr{V}$ and then recurses on the
subgraphs of the edited graph $\G^* \triangle
U(\G^*[V'],\mathscr{V})$. More details on such suitable partitions
$\mathscr{V}$ will be given in Thm.~\ref{thm:algo-consistent} below.  The
parts in the algorithm highlighted in color can be omitted.  They are
useful, however, if one is also interested in a tree $(T,\sigma)$ that
explains the editing result $(\G^*,\sigma)$ and to show that
$(\G^*,\sigma)$ is indeed a BMG (see below).
Alg.~\ref{alg:general-local-optimal} is designed to accumulate the
edit sets in each step, Line~\ref{line:apply-edits}. In particular, the
total edit cost and the scores $c(\G^*[V'],\mathscr{V})$ are closely tied
together, which follows from the following result:
\begin{lemma}
  All edit sets $U(\G^*[V'],\mathscr{V})$ constructed in
  Alg.~\ref{alg:general-local-optimal} are pairwise disjoint.
  \label{lem:independent-U}
\end{lemma}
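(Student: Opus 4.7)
The plan is to show disjointness by a case analysis on the relative position of any two recursive calls of Alg.~\ref{alg:general-local-optimal} in its recursion tree. Given two distinct calls on subsets $V',V''\subseteq V$, one of the following holds: (i) $V'\cap V''=\emptyset$, i.e., the calls lie in different branches, or (ii) one is properly contained in the other, say $V''\subsetneq V'$. In case (i), the edit sets $U(\G^*[V'],\mathscr{V}')$ and $U(\G^*[V''],\mathscr{V}'')$ are subsets of $V'\times V'$ and $V''\times V''$, respectively, and are therefore trivially disjoint. Only case (ii) requires genuine work.

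For the nested case I would argue by contradiction. Assume $(x,y)\in U(\G^*[V'],\mathscr{V}')\cap U(\G^*[V''],\mathscr{V}'')$. Since $x,y\in V''\subseteq V_i$ for the unique part $V_i\in\mathscr{V}'$ containing $V''$, both endpoints lie in the same part of $\mathscr{V}'$. By Lemma~\ref{lem:unsat-rel-charac}, the sets $U_1$ and $U_2$ only record pairs that straddle the partition, so necessarily $(x,y)\in U_3(\G^*[V'],\mathscr{V}')$. The defining conditions of $U_3$ then force $(x,y)\notin E(\G^*)$ immediately before the edit at $V'$ is applied, and $V_i[\sigma(y)]=\{y\}$, i.e., $y$ is the unique vertex of color $\sigma(y)$ in $V_i$. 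After the edit at $V'$ is carried out, $(x,y)$ has become an arc of $\G^*$.

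Next I would track $(x,y)$ down the chain of nested calls $V_i=V^{(0)}\supseteq V^{(1)}\supseteq\cdots\supseteq V^{(m)}=V''$ by induction on $j$, establishing the invariant that immediately before the edit at $V^{(j)}$ is applied one has $(x,y)\in E(\G^*[V^{(j)}])$ and $V^{(j)}[\sigma(y)]=\{y\}$. The base case $j=0$ follows from the analysis above, and $V^{(j)}\subseteq V_i$ together with $y\in V^{(j)}$ (guaranteed by $x,y\in V''\subseteq V^{(j)}$) forces $V^{(j)}[\sigma(y)]\subseteq V_i[\sigma(y)]=\{y\}$, so the color-uniqueness part of the invariant holds at every $j$. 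Under these two invariants, none of the three pieces described in Lemma~\ref{lem:unsat-rel-charac} at level $V^{(j)}$ can contain $(x,y)$: membership in $U_1$ would require a second vertex of color $\sigma(y)$ in the part containing $x$, violating uniqueness, while $U_2$ and $U_3$ both require $(x,y)\notin E(\G^*[V^{(j)}])$, violating the arc invariant. Hence $(x,y)$ is untouched at level $V^{(j)}$, and the invariant propagates to $V^{(j+1)}$. In particular, $(x,y)\notin U(\G^*[V''],\mathscr{V}'')$, contradicting the choice of $(x,y)$.

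The hard part is little more than careful bookkeeping: once one notices that the $U_3$-edit at $V'$ simultaneously converts $(x,y)$ into an arc and that the uniqueness condition on $\sigma(y)$ in $V_i$ is inherited by every nested subset $V^{(j)}$ still containing $y$, the three cases of Lemma~\ref{lem:unsat-rel-charac} rule out $(x,y)$ reappearing in any deeper edit set. No further structural properties of BMGs or of the specific rule for choosing $\mathscr{V}$ are needed.
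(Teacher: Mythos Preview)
Your proposal is correct and follows essentially the same approach as the paper: both arguments split into the trivial disjoint-branches case and the nested case, and in the nested case both observe that $(x,y)$ must lie in $U_3$ at the outer level (since $U_1,U_2$ straddle the partition), then use the resulting color-uniqueness of $y$ together with the fact that $(x,y)$ has become an arc to rule out its reappearance in any deeper edit set. Your explicit induction along the chain $V_i=V^{(0)}\supseteq\cdots\supseteq V^{(m)}=V''$ with the two-part invariant is a slightly cleaner packaging of what the paper does in a more compressed form (via a separate helper lemma for the directly-consecutive case and a compressed argument for the general nested case), but the underlying idea is the same.
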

The proof of Lemma~\ref{lem:independent-U} and a technical result on which
it relies can be found in the Appendix.  As an immediate consequence of
Lemma~\ref{lem:independent-U}, we have
\begin{corollary}
  The edit cost of Alg.~\ref{alg:general-local-optimal} is the sum of
  the \ur-costs $c(\G^*[V'],\mathscr{V})$ in each recursion step.
  \label{cor:sum-c}
\end{corollary}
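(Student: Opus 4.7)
The plan is to derive the corollary directly from Lemma~\ref{lem:independent-U} together with two elementary properties of the symmetric difference: associativity/commutativity and the fact that $|A \symdiff B| = |A| + |B|$ whenever $A \cap B = \emptyset$. The edit cost of the algorithm is, by definition, the cardinality of the total set of modified (non-)arcs, namely $|E(\G) \symdiff E(\G^*)|$, where $\G^*$ denotes the final graph returned in Line~\ref{line:return-edited}.

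First I would track the evolution of $\G^*$ along the recursion. Enumerate the recursive calls in any order as $1, 2, \dots, N$, and let $U_i := U(\G^{*}[V'_i],\mathscr{V}_i)$ denote the edit set constructed in the $i$-th call in Line~\ref{line:apply-edits}. By the initialization in Line~\ref{line:init-G*} and the update rule in Line~\ref{line:apply-edits}, the arc set of $\G^*$ after all calls have completed is
\begin{equation*}
  E(\G^*) \;=\; E(\G) \,\symdiff\, U_1 \,\symdiff\, U_2 \,\symdiff\, \cdots \,\symdiff\, U_N.
\end{equation*}
Using associativity and commutativity of $\symdiff$, this rearranges to $E(\G^*) = E(\G) \symdiff \bigl(U_1 \symdiff \cdots \symdiff U_N\bigr)$, and hence $E(\G) \symdiff E(\G^*) = U_1 \symdiff \cdots \symdiff U_N$.

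Next I would apply Lemma~\ref{lem:independent-U}, which guarantees that the sets $U_1, \dots, U_N$ are pairwise disjoint. For pairwise disjoint sets, iterated symmetric difference coincides with the (disjoint) union, so
\begin{equation*}
  E(\G) \,\symdiff\, E(\G^*) \;=\; \bigcupdot_{i=1}^{N} U_i,
\end{equation*}
and therefore $|E(\G) \symdiff E(\G^*)| = \sum_{i=1}^{N} |U_i| = \sum_{i=1}^{N} c(\G^{*}[V'_i],\mathscr{V}_i)$, which is the claimed identity.

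The only non-trivial ingredient is Lemma~\ref{lem:independent-U}; everything else is bookkeeping, and there is no real obstacle beyond being careful that the $\G^*$ appearing inside each $U(\G^{*}[V'_i],\mathscr{V}_i)$ refers to the (already updated) state of $\G^*$ at the time of the $i$-th call, which is exactly the quantity whose disjointness the lemma asserts.
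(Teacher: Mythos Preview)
Your proof is correct and follows the same approach as the paper, which simply states that the corollary is ``an immediate consequence of Lemma~\ref{lem:independent-U}'' without further elaboration. You have spelled out exactly the bookkeeping the paper leaves implicit: that the accumulated edits form an iterated symmetric difference, and that pairwise disjointness (Lemma~\ref{lem:independent-U}) turns this into a disjoint union whose cardinality is the sum of the individual $c(\G^*[V'],\mathscr{V})$.
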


It is important to note that the edits $U(\G^*[V'],\mathscr{V})$ must be
applied immediately in each step (cf.\ Line~\ref{line:apply-edits} in
Alg.~\ref{alg:general-local-optimal}). In particular,
Lemma~\ref{lem:independent-U} and Cor.~\ref{cor:sum-c} pertain to the
partitioning of the edited graph $\G^*$, not to the original graph $\G$.

\begin{theorem}
  \label{thm:algo-tree}
  Every pair of edited graph $(\G^*,\sigma)$ and tree $T$ produced as
  output by Alg.~\ref{alg:general-local-optimal} satisfies
  $(\G^*,\sigma)=\G(T,\sigma)$.  In particular, $(\G^*,\sigma)$ is a BMG.
\end{theorem}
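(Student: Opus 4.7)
I would prove Thm.~\ref{thm:algo-tree} by strong induction on $|V'|$, establishing the stronger statement that whenever the recursive call \texttt{Edit}$(V')$ terminates, the tree $T'$ it returns and the current state of $\G^*$ satisfy $(\G^*[V'],\sigma_{|V'})=\G(T',\sigma_{|V'})$; the theorem itself is the special case $V'=V$. The base case $|V'|=1$ is immediate since both sides are single-vertex graphs without arcs. For the inductive step, let $\mathscr{V}=\{V_1,\dots,V_k\}$ be the partition chosen in the call on $V'$ and let $T_i$ be the tree returned by the recursive call on $V_i$, so that $T'$ has root $\rho'$ with the roots of $T_1,\dots,T_k$ as its children and $T'|_{V_i}=T_i$. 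By the inductive hypothesis, $(\G^*[V_i],\sigma_{|V_i})=\G(T_i,\sigma_{|V_i})$ once that call finishes. Since each recursive call produces edits only between pairs in its own input set, no cross-arc between two distinct blocks of $\mathscr{V}$ is ever touched after the top-level edit applied in the call on $V'$, in agreement with Lemma~\ref{lem:independent-U}.

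The equality for $V'$ then follows from a short case analysis on a pair $(x,y)\in V'\times V'$. If $x\in V_i$ and $y\in V_j$ with $i\ne j$, then $\lca_{T'}(x,y)=\rho'$; for any $y'$ with $\sigma(y')=\sigma(y)$ one gets $\lca_{T'}(x,y')\prec_{T'}\rho'$ when $y'\in V_i$, and $\lca_{T'}(x,y')=\rho'$ when $y'\notin V_i$. Hence $y$ is a best match of $x$ in $T'$ iff $\sigma(y)\notin\sigma(V_i)$, which is precisely the property enforced by the edit $\G^*\symdiff U(\G^*[V'],\mathscr{V})$: $U_1$ deletes the arcs with $\sigma(y)\in\sigma(V_i)$, and $U_2$ inserts the missing ones with $\sigma(y)\notin\sigma(V_i)$. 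If instead $x,y\in V_i$, then $\lca_{T'}(x,y)\in V(T_i)$, while any $y'$ of color $\sigma(y)$ outside $V_i$ satisfies $\lca_{T'}(x,y')=\rho'\succ_{T'}\lca_{T'}(x,y)$; consequently $y$ is a best match of $x$ in $T'$ iff it is a best match in $T_i$, which by induction corresponds to $(x,y)\in E(\G^*[V_i])=E(\G^*[V'])$.

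The only delicate point is the behavior of $U_3$: the top-level edit may insert within-block arcs $(x,y)\in V_i\times V_i$ with $V_i[\sigma(y)]=\{y\}$, which at first sight might appear to clash with the recursive edits subsequently applied on $V_i$. The conflict dissolves once one notes that the inductive hypothesis is applied to the recursive call as it sees the graph \emph{after} the top-level edit has already been committed (see Line~\ref{line:apply-edits}), so the $U_3$ additions are simply part of the input that $T_i$ is guaranteed to explain. Phylogenicity of $T'$ is inherited from the inductive step together with the algorithm's insistence that $|\mathscr{V}|\ge 2$, so the outermost output $(\G^*,\sigma)=\G(T,\sigma)$ is a BMG by Def.~\ref{def:BestMatchGraph}. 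The main obstacle is really the bookkeeping of which arcs are modified at which recursion level; once one commits to stating the inductive hypothesis for the edited graph rather than the original and leans on Lemma~\ref{lem:independent-U}, the case analysis becomes routine.
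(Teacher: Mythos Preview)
Your proof is correct and the underlying case analysis is the same as the paper's: cross-block pairs are settled by $U_1$/$U_2$, within-block pairs by the recursive subtree, and the $U_3$ insertions are absorbed because the recursion sees the already-edited graph. The organizational choice differs: you run a clean strong induction on $|V'|$ with the strengthened hypothesis $(\G^*[V'],\sigma_{|V'})=\G(T',\sigma_{|V'})$, whereas the paper argues by contradiction, picking a single offending pair $(x,y)\in E^*\symdiff E^T$, locating the unique recursion step at $u=\lca_T(x,y)$, and checking that $U_1$, $U_2$, $U_3$ set the arc correctly there and that no deeper step can undo it. Your inductive framing is the more standard way to certify a recursive algorithm and makes the ``no later step touches cross-block arcs'' fact structurally obvious; the paper's direct argument avoids the induction scaffolding but has to argue separately that the $\lca$ step is the last one affecting $(x,y)$. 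One small notational slip: writing ``$(x,y)\in E(\G^*[V_i])=E(\G^*[V'])$'' is not literally an equality of edge sets; you mean that for $x,y\in V_i$ membership in the two coincides, which is of course what you use.
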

\begin{proof}
  By construction, the tree $T$ is phylogenetic and there is a
  one-to-one correspondence between the vertices $u\in V(T)$ and the
  recursion steps, which operate on the sets $V'=L(T(u))$.  If
  $|V'|\ge 2$ (or, equivalently, $u$ is an inner vertex of $T$), we
  furthermore have $\mathscr{V}=\{L(T(v)) \mid v\in\child_{T}(u)\}$ for the
  partition $\mathscr{V}$ of $V'$ chosen in that recursion step.  In the
  following, we denote by $(\G^*,\sigma)$ the graph during the editing
  process, and by $(\G,\sigma)$ the input graph, i.e., as in
  Alg.~\ref{alg:general-local-optimal}. For brevity, we write
  $E^*$ for the arc set of the final edited graph and
  $E^T\coloneqq E(\G(T,\sigma))$.
  
  Let us assume, for contradiction, that there exists (a)
  $(x,y)\in E^*\setminus E^T\ne\emptyset$, or (b)
  $(x,y)\in E^T\setminus E^*\ne\emptyset$.  In either case, we set
  $u\coloneqq \lca_T(x,y)$ and consider the recursion step on
  $V'\coloneqq L(T(u))$ with the corresponding partition
  $\mathscr{V}\coloneqq\{L(T(v)) \mid v\in\child_{T}(u)\}$ chosen for $V'$.
  Note that $x\ne y$, and thus $u\in V^0(T)$.  Moreover, let $v_x$ be the
  child of $u$ such that $x\preceq_{T} v_x$, and
  $V_x\coloneqq L(T(v_x))\in \mathscr{V}$.
  
  Case~(a): $(x,y)\in E^*\setminus E^T\ne\emptyset$.  \newline Since
  $(x,y)\notin E^T$ and by the definition of best matches, there must be a
  vertex $y'\in V_x$ of color $\sigma(y)$ such that
  $\lca_T(x,y')\prec_T\lca_{T}(x,y)=u$, and thus
  $\sigma(y)\in \sigma(V_x)$.  Moreover, we have $V_x\in \mathscr{V}$,
  $x\in V_x$ and $y\in V'\setminus V_x$.  Two subcases need to be
  considered, depending on whether or not $(x,y)$ is an arc in $\G^*$ at
  the beginning of the recursion step.  In the first case, the
  arguments above imply that $(x,y)\in U_1(\G^*[V'], \mathscr{V})$,
  and thus, $(x,y)\in U(\G^*[V'], \mathscr{V})$ by
  Lemma~\ref{lem:unsat-rel-charac}. Hence, we delete the arc $(x,y)$ in
  this step.  In the second case, it is an easy task to verify that
  none of the definitions of $U_1(\G^*[V'], \mathscr{V})$,
  $U_2(\G^*[V'], \mathscr{V})$, and $U_3(\G^*[V'], \mathscr{V})$ matches
  for $(x,y)$.  Since this step is clearly the last one in the
  recursion hierarchy that can affect the (non-)arc $(x,y)$, it follows
  for both subcases that $(x,y)\notin E^*$; a contradiction.
  
  Case~(b): $(x,y)\in E^T\setminus E^*\ne\emptyset$. \newline Since
  $(x,y)\in E^T$ and by the definition of best matches, there cannot be a
  vertex $y'\in V_x$ of color $\sigma(y)$ such that
  $\lca_T(x,y')\prec_T\lca_{T}(x,y)=u$, and thus
  $\sigma(y)\notin \sigma(V_x)$.  Moreover, we have $V_x\in \mathscr{V}$,
  $x\in V_x$ and $y\in V'\setminus V_x$.  Again, two subcases need to
  be distinguished depending on whether or not $(x,y)$ is an arc in
  $\G^*$ at the beginning of the recursion step. In the first case,
  the arguments above make it easy to verify that none of the
  definitions of $U_1(\G^*[V'], \mathscr{V})$,
  $U_2(\G^*[V'], \mathscr{V})$, and $U_3(\G^*[V'], \mathscr{V})$ matches
  for $(x,y)$.  In the second case, we obtain
  $(x,y)\in U_2(\G^*[V'], \mathscr{V})$, and thus,
  $(x,y)\in U(\G^*[V'], \mathscr{V})$ by
  Lemma~\ref{lem:unsat-rel-charac}. Hence, we insert the arc $(x,y)$ in
  this step.  As before, the (non-)arc $(x,y)$ remains unaffected in any
  deeper recursion step. Therefore, we have $(x,y)\in E^*$ in both
  subcases; a contradiction.
  
  Finally, $(\G^*,\sigma)=\G(T,\sigma)$ immediately implies that 
  $(\G^*,\sigma)$ is a BMG.
  \qed
\end{proof}

\begin{figure}[htb]
  \centering
  \includegraphics[width=0.85\textwidth]{./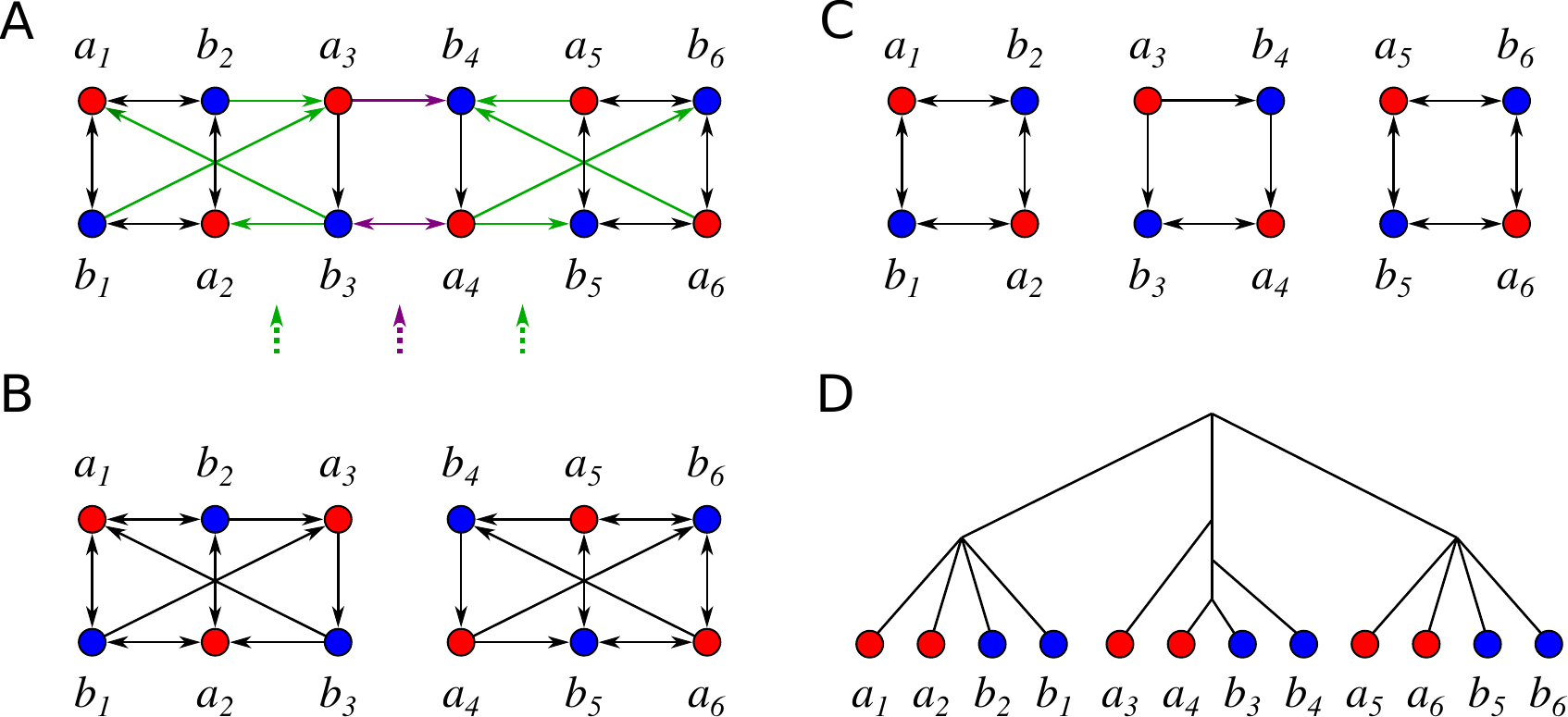}
  \caption[]{(A) Example for a colored digraph $(\G,\sigma)$ in which the
    ``locally'' optimal (first) split does not result in a global optimal
    BMG editing.  The minimal \ur-cost equals $3$ and is
    attained only for the partition
    $\mathscr{V}=\{ \{a_1,a_2,a_3,b_1,b_2,b_3\},
    \{a_4,a_5,a_6,b_4,b_5,b_6\}\}$, which was verified by full enumeration
    of all partitions and Lemma~\ref{lem:unsat-rel-charac}.  For this
    partition, $U(\G,\mathscr{V})$ comprises the three purple arcs.  (B)
    The two (isomorphic) induced subgraphs obtained by applying the locally
    optimal partition $\mathscr{V}$. Each of them has a (global) optimal
    BMG editing cost of $4$.  Therefore, the overall symmetric difference
    of an edited graph (using the initial split $\mathscr{V}$ as specified)
    comprises at least $c(\G,\mathscr{V})+2\cdot 4=11$ arcs.  (C) An
    optimal editing removes the 8 green arcs and results in a digraph that
    is explained by the tree in (D). The optimality of this solution was
    verified using an implementation of the ILP formulation for BMG editing
    given in \cite{Schaller:20y}.}
  \label{fig:global-vs-local-optimum}
\end{figure}

Cor.~\ref{cor:sum-c} suggests a greedy-like ``local'' approach. In each
step, the partition $\mathscr{V}$ is chosen to minimize the score
$c(\G, \mathscr{V})$ in Line~\ref{line:min-cost}. The example in
Fig.~\ref{fig:global-vs-local-optimum} shows, however, that the greedy-like
choice of $\mathscr{V}$ does not necessarily yield a globally optimal
edit set.

In order to identify arcs that must be contained in every edit set, we first
clarify the relationship between the partitions $\mathfrak{P}_{\ge 2}$ on
$V$ and the partitions defined by the phylogenetic trees on $V$.
\begin{lemma}
  \label{lem:tree-sets-equal}
  Let $V$ be a set with $|V|\ge 2$. Let $\mathfrak{P}_{\ge 2}$ be the set
  of all partitions $\mathscr{V}$ of $V$ with $|\mathscr{V}|\ge2$.  Then
  the set $\mathscr{T}_V$ of all phylogenetic trees with leaf set $V$
  satisfies
  $\mathscr{T}_V= \bigcup_{\mathscr{V}\in\mathfrak{P}_{\ge 2}}
  \mathscr{T}(\mathscr{V})$.
\end{lemma}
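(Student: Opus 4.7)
The plan is to prove the two inclusions separately, both of which follow rather directly from unwinding the definitions.

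For the inclusion $\bigcup_{\mathscr{V}\in\mathfrak{P}_{\ge 2}} \mathscr{T}(\mathscr{V}) \subseteq \mathscr{T}_V$, I would simply note that any $T \in \mathscr{T}(\mathscr{V})$ is by definition a phylogenetic tree on $V$, hence lies in $\mathscr{T}_V$.

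For the reverse inclusion $\mathscr{T}_V \subseteq \bigcup_{\mathscr{V}\in\mathfrak{P}_{\ge 2}} \mathscr{T}(\mathscr{V})$, I would take an arbitrary $T \in \mathscr{T}_V$ and construct an explicit partition $\mathscr{V}_T \in \mathfrak{P}_{\ge 2}$ with $T \in \mathscr{T}(\mathscr{V}_T)$. The natural choice is $\mathscr{V}_T \coloneqq \{L(T(v)) \mid v \in \child_T(\rho_T)\}$. The verification then has three parts. First, $\mathscr{V}_T$ is a partition of $V$: each $L(T(v))$ is non-empty since the subtree $T(v)$ is finite and, in a phylogenetic tree, recursing down children eventually reaches a leaf; their union is $L(T)=V$ since every leaf lies on a unique path to $\rho_T$ passing through exactly one child of $\rho_T$; and pairwise disjointness follows from the same uniqueness. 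Second, $|\mathscr{V}_T| \geq 2$: since $|V| \geq 2$, the root $\rho_T$ is an inner vertex, and as $T$ is phylogenetic, it has at least two children. Third, $T \in \mathscr{T}(\mathscr{V}_T)$ holds by the very definition of $\mathscr{V}_T$.

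I do not anticipate a genuine obstacle here; the statement is essentially the observation that the partition $\{L(T(v)) \mid v \in \child_T(\rho_T)\}$ gives a bijection between phylogenetic trees on $V$ (considered up to the grouping induced at the root) and partitions of $V$ with at least two blocks. The only minor care needed is to ensure the non-emptiness of each $L(T(v))$, which relies on the phylogenetic assumption that inner vertices have at least two children and the finiteness of $T$.
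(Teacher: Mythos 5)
Your proof is correct and follows essentially the same route as the paper: the forward inclusion is immediate from the definition of $\mathscr{T}(\mathscr{V})$, and for the converse the paper likewise takes the partition $\{L(T(v)) \mid v\in\child_T(\rho_T)\}$ induced by the children of the root and checks it lies in $\mathfrak{P}_{\ge 2}$ using the phylogenetic property. Your extra care about non-emptiness and disjointness of the blocks is fine but adds nothing beyond what the paper's argument implicitly uses.
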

\begin{proof}
  For every $\mathscr{V}\in\mathfrak{P}_{\ge 2}$,
  $\mathscr{T}(\mathscr{V})$ is a set of phylogenetic trees on $V$.  Hence,
  we conclude
  $\bigcup_{\mathscr{V}\in\mathfrak{P}_{\ge 2}} \mathscr{T}(\mathscr{V})
  \subseteq \mathscr{T}_V$. Conversely, assume that $T\in \mathscr{T}_V$.
  Since $T$ (with root $\rho_T$) is a phylogenetic tree and has at least
  two leaves, we have $|\child_{T}(\rho_T)|\ge 2$. Together with
  $L(T(\rho_T))=L(T)=V$, this implies
  $\mathscr{V}^*\coloneqq\{L(T(v)) \mid v\in \child_{T}(\rho_T)\} \in
  \mathfrak{P}_{\ge 2}$.  In particular, $T$ satisfies
  $T\in\mathscr{T}(\mathscr{V}^*)$ for some
  $\mathscr{V}^* \in \mathfrak{P}_{\ge 2}$, and is therefore contained in
  $\bigcup_{\mathscr{V}\in\mathfrak{P}_{\ge 2}} \mathscr{T}(\mathscr{V})$.
  \qed
\end{proof}

Using Lemma~\ref{lem:tree-sets-equal} and given that $|V|\ge 2$, we can 
express the set of relations
that are unsatisfiable for every partition as follows
\begin{equation}
  \begin{split}
    \bigcap_{\mathscr{V}\in\mathfrak{P}_{\ge2}} U(\G,\mathscr{V})
    &= \bigcap_{\mathscr{V}\in\mathfrak{P}_{\ge2}}
    \left( \bigcap_{T\in \mathscr{T}(\mathscr{V})} U(\G,T) \right) 
    = \bigcap_{T\in\bigcup_{\mathscr{V}\in\mathfrak{P}_{\ge 2}}
      \mathscr{T}(\mathscr{V})} U(\G,T)\\
    &= \bigcap_{T\in\mathscr{T}_V} U(\G,T) = U^*(\G)\;,
  \end{split}
  \label{eq:U*U-P}
\end{equation}
i.e., it coincides with the set of relations that are unsatisfiable for
every phylogenetic tree, and thus part of every edit set. 
Note that $U^*(\G)$ is trivially empty if $|V|<2$.
We next show that
$U^*(\G)$ can be computed without considering the partitions of $V$
explicitly.

\begin{theorem}
  \label{thm:trivial-unsat-rel}
  Let $(\G=(V,E),\sigma)$ be a properly vertex-colored digraph with
  $|V|\ge 2$ then
  \begin{equation}
    U^*(\G) = \left\{ (x,y)  \mid 
    (x,y)\notin E,\; x\ne y,\; V[\sigma(y)]=\{y\} \right\}.
    \label{eq:U-equalities}
  \end{equation}
\end{theorem}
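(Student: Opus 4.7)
The plan is to combine Lemma~\ref{lem:unsat-rel-charac} with the identity in Eq.~\eqref{eq:U*U-P}, which already shows $U^*(\G)=\bigcap_{\mathscr{V}\in\mathfrak{P}_{\ge 2}} U(\G,\mathscr{V})$. Thus it suffices to prove two inclusions. Throughout, let $R$ denote the right-hand side of Eq.~\eqref{eq:U-equalities}.

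For the inclusion $R\subseteq U^*(\G)$, take an arbitrary $(x,y)\in R$ and an arbitrary partition $\mathscr{V}\in\mathfrak{P}_{\ge 2}$. Let $V_i\in\mathscr{V}$ be the unique part with $x\in V_i$. If $y\in V_i$, then $x\ne y$, $(x,y)\notin E$, and the global condition $V[\sigma(y)]=\{y\}$ implies $V_i[\sigma(y)]=\{y\}$, so $(x,y)\in U_3(\G,\mathscr{V})$. Otherwise $y\in V\setminus V_i$; since $y$ is the unique vertex of color $\sigma(y)$ in all of $V$, we have $\sigma(y)\notin\sigma(V_i)$, and together with $(x,y)\notin E$ this gives $(x,y)\in U_2(\G,\mathscr{V})$. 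In either case, Lemma~\ref{lem:unsat-rel-charac} yields $(x,y)\in U(\G,\mathscr{V})$, and since $\mathscr{V}$ was arbitrary, $(x,y)\in U^*(\G)$.

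For the reverse inclusion $U^*(\G)\subseteq R$, I proceed by contraposition: given $(x,y)$ with $x\ne y$ and $(x,y)\notin R$, I must exhibit a single partition $\mathscr{V}\in\mathfrak{P}_{\ge 2}$ with $(x,y)\notin U(\G,\mathscr{V})$. There are two cases according to why $(x,y)\notin R$. \emph{Case 1: $(x,y)\in E$.} The goal here is a partition that avoids the $U_1$ condition (membership in $U_2,U_3$ is already excluded because $(x,y)\in E$). If $|V|=2$, take $\mathscr{V}=\{\{x\},\{y\}\}$; since $\sigma$ is proper and $(x,y)\in E$ forces $\sigma(x)\ne\sigma(y)$, one checks $\sigma(y)\notin\sigma(\{x\})$. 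If $|V|\ge 3$, take $\mathscr{V}=\{\{x,y\},V\setminus\{x,y\}\}$; putting $x$ and $y$ in the same part trivially blocks $U_1$. \emph{Case 2: $(x,y)\notin E$ and there exists $y'\in V\setminus\{y\}$ with $\sigma(y')=\sigma(y)$.} Here $(x,y)\notin U_1$ automatically, so I need to block $U_2$ and $U_3$. If $y'\ne x$, take $\mathscr{V}=\{\{x,y'\},V\setminus\{x,y'\}\}$: then $x$ and $y$ lie in different parts, the part containing $x$ already hosts color $\sigma(y)$ (via $y'$), killing $U_2$, and $U_3$ is moot because $y\notin V_x$. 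If $y'=x$ (so $\sigma(x)=\sigma(y)$), take $\mathscr{V}=\{\{x,y\},V\setminus\{x,y\}\}$ (for $|V|\ge 3$) or $\mathscr{V}=\{\{x\},\{y\}\}$ (for $|V|=2$, noting that in the latter subcase the $\{x\}$-part contains color $\sigma(y)=\sigma(x)$, killing $U_2$); in the former subcase, both $x$ and $y$ lie in the same part $V_i=\{x,y\}$ with $V_i[\sigma(y)]=\{x,y\}\ne\{y\}$, so $U_3$ does not apply either. In every subcase, $(x,y)\notin U(\G,\mathscr{V})$ by Lemma~\ref{lem:unsat-rel-charac}, hence $(x,y)\notin U^*(\G)$.

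The main obstacle is the bookkeeping in Case 2 when $y'=x$, because that forces a non-proper-looking coloring situation where $x$ itself witnesses the extra color; the fix is to select the two-element part $\{x,y\}$ so that $x$ supplies the redundant color inside the part, neutralizing both $U_2$ and $U_3$ simultaneously. Everything else reduces to a direct verification via Lemma~\ref{lem:unsat-rel-charac}.
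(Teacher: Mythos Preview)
Your proof is correct and follows essentially the same strategy as the paper: reduce to $\bigcap_{\mathscr{V}\in\mathfrak{P}_{\ge 2}} U(\G,\mathscr{V})$ via Eq.~\eqref{eq:U*U-P}, and for the nontrivial inclusion construct explicit bipartitions (with $\{x,y\}$ or $\{x,y'\}$ as one block) witnessing $(x,y)\notin U(\G,\mathscr{V})$. The only difference in execution is that you check (non-)membership through the $U_1/U_2/U_3$ decomposition of Lemma~\ref{lem:unsat-rel-charac}, whereas the paper argues directly with best matches in trees and first records that $\sigma(x)\ne\sigma(y)$ for every $(x,y)\in U^*(\G)$, which lets it skip your $y'=x$ subcase entirely.
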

\begin{proof}
  First note that $|V|\ge 2$ ensures that
  $\mathfrak{P}_{\ge 2}\ne\emptyset$.  Moreover, since $|\mathscr{V}|\ge2$
  for any $\mathscr{V}\in\mathfrak{P}_{\ge 2}$, the sets
  $\mathscr{T}(\mathscr{V})$ are all non-empty as well. With the
  abbreviation $\hat U(\G)$ for the right-hand side of
  Eq.~(\ref{eq:U-equalities}), we show that
  $\hat U(\G)= \bigcap_{\mathscr{V}\in\mathfrak{P}_{\ge2}}
  U(\G,\mathscr{V})$ which by Eq.~(\ref{eq:U*U-P}) equals $U^*(\G)$.
  
  Suppose first that $(x,y)\in \hat U(\G)$. Then $x\ne y$ and
  $V[\sigma(y)]=\{y\}$ imply that $\sigma(x)\ne\sigma(y)$.  This together
  with the facts that (i) $y$ is the only vertex of its color in $V$, and
  (ii) $L(T)=V$ for each $T\in \mathscr{T}(\mathscr{V})$ and any
  $\mathscr{V}\in\mathfrak{P}_{\ge 2}$ implies that $y$ is a best match of
  $x$ in every such tree $T$, i.e.\ $(x,y)\in E(\G(T,\sigma))$.  Since in
  addition $(x,y)\notin E$ by assumption, we conclude that
  $(x,y)\in U^{*}(\G)$.
  
  Now suppose that $(x,y)\in U^{*}(\G)$. Observe that
  $\sigma(x)\ne\sigma(y)$ (and thus $x\ne y$) as a consequence of
  Def.~\ref{def:unsat-relations} and the fact that $(\G,\sigma)$ and all
  BMGs are properly colored.  If $V=\{x,y\}$ and thus $\{\{x\},\{y\}\}$ is
  the only partition in $\mathfrak{P}_{\ge 2}$, the corresponding unique
  tree $T$ consists of $x$ and $y$ connected to the root. In this case, we
  clearly have $(x,y)\in E(\G(T,\sigma))$ since $\sigma(x)\ne\sigma(y)$.
  On the other hand, if $\{x,y\}\subsetneq V$, then we can find a partition
  $\mathscr{V}\in \mathfrak{P}_{\ge 2}$ such that $V_{i}=\{x,y\}$ for some
  $V_{i}\in\mathscr{V}$.  In this case, every tree
  $T\in\mathscr{T}(\mathscr{V})$ has a vertex $v_{i}\in\child_{T}(\rho_T)$
  with the leaves $x$ and $y$ as its single two children. Clearly,
  $(x,y)\in E(\G(T,\sigma))$ holds for any such tree.  In summary, there
  always exists a partition $\mathscr{V}\in\mathfrak{P}_{\ge 2}$ such that
  $(x,y)\in E(\G(T,\sigma))$ for some tree $T\in\mathscr{T}(\mathscr{V})$.
  Therefore, by
  $(x,y)\in\bigcap_{\mathscr{V}\in\mathfrak{P}_{\ge 2}} U(\G,\mathscr{V})$
  and Def.~\ref{def:unsat-relations}, we conclude that $(x,y)\notin E$.  In
  order to obtain $(x,y)\in \hat U(\G)$, it remains to show that
  $V[\sigma(y)]=\{y\}$.  Since $(x,y)\notin E$ and
  $(x,y)\in\bigcap_{\mathscr{V}\in\mathfrak{P}_{\ge 2}} U(\G,\mathscr{V})$,
  it must hold that $(x,y)\in E(\G(T,\sigma))$ for all
  $T\in\mathscr{T}(\mathscr{V})$ and all
  $\mathscr{V}\in\mathfrak{P}_{\ge 2}$.  Now assume, for contradiction,
  that there is a vertex $y'\ne y$ of color $\sigma(y')=\sigma(y)$.  Since
  $\sigma(x)\ne\sigma(y)$, the vertices $x,y,y'$ must be pairwise distinct.
  Hence, we can find a partition $\mathscr{V}\in \mathfrak{P}_{\ge 2}$ such
  that $V_{i}=\{x,y'\}$ for some $V_{i}\in\mathscr{V}$.  In this case,
  every tree $T\in\mathscr{T}(\mathscr{V})$ has a vertex
  $v_{i}\in\child_{T}(\rho_T)$ with only the leaves $x$ and $y'$ as its
  children. Clearly, $\lca_T(x,y')=v_{i}\prec_{T}\rho_{T}=\lca_{T}(x,y)$,
  and thus $(x,y)\notin E(\G(T,\sigma))$; a contradiction.  Therefore, we
  conclude that $y$ is the only vertex of its color in $V$, and hence,
  $(x,y)\in \hat U(\G)$. In summary, therefore, we have
  $U^{*}(\G)=\hat U(\G)$.  \qed
\end{proof}
As a consequence of Thm.~\ref{thm:trivial-unsat-rel} and by similar
arguments as in the proof of Cor.~\ref{cor:U-polytime}, we observe
\begin{corollary}
  The set $U^*(\G)$ can be computed in quadratic time.
\end{corollary}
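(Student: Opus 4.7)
The plan is to exploit the explicit characterization of $U^*(\G)$ given by Thm.~\ref{thm:trivial-unsat-rel}, so that no enumeration over trees or partitions is required. Concretely, I would argue that each of the three conditions defining membership in the right-hand side of Eq.~(\ref{eq:U-equalities}) can be checked in constant time after linear-time preprocessing.

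First I would perform a single pass over $V$ to compute, for every color $s \in \sigma(V)$, the number $n_s = |V[s]|$ of vertices of color $s$. Storing these counts in a hash table or array indexed by colors takes $O(|V|)$ time and space. From this, the predicate ``$V[\sigma(y)] = \{y\}$'' reduces to the $O(1)$ test ``$n_{\sigma(y)} = 1$'', since this equality already implies that $y$ is the unique vertex of color $\sigma(y)$. The condition $x \ne y$ is trivially checkable, and arc membership $(x,y) \in E$ can be tested in constant time assuming the standard adjacency-matrix (or equivalent) representation of the digraph.

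Next, I would iterate over all ordered pairs $(x,y) \in V \times V$, at a total cost of $O(|V|^2)$, and collect those for which the three tests succeed. By Thm.~\ref{thm:trivial-unsat-rel}, the resulting set is exactly $U^*(\G)$. Since the output itself can have size $\Theta(|V|^2)$ (e.g.\ when many colors are singletons), this quadratic bound is also optimal in the worst case. The main potential obstacle is merely a matter of data-structure housekeeping: ensuring that the color counts are maintained in a way that permits $O(1)$ lookups and that arc queries are genuinely constant-time. Both are standard and do not affect the asymptotic bound, so the total running time is $O(|V|^2)$, as claimed.
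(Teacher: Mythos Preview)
Your proposal is correct and follows essentially the same approach as the paper: the paper simply invokes Thm.~\ref{thm:trivial-unsat-rel} and refers back to the argument of Cor.~\ref{cor:U-polytime}, which amounts to precomputing per-color counts in $O(|V|)$ and then testing each ordered pair in constant time. Your additional remark that the quadratic bound is also a lower bound in the worst case is a nice touch not present in the paper.
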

By Thm.~\ref{thm:trivial-unsat-rel}, $U^*(\G)$ contains only non-arcs,
more precisely, missing arcs pointing towards a vertex that is the
only one of its color and thus, by definition, a best match of every
other vertex irrespective of the details of the gene tree. By definition,
furthermore, $U^*(\G)$ is a subset of every edit set for 
$(\G,\sigma)$. We
therefore have the lower bound
\begin{equation}
  |U^*(\G)| \le c(\G,\mathscr{V}) 
\end{equation}
for every $\mathscr{V}\in\mathfrak{P}_{\ge2}$. 

The following result shows that if $(\G,\sigma)$ is a BMG, then a suitable
partition $\mathscr{V}$ can be chosen such that
$c(\G,\mathscr{V})=|U^*(\G)|=0$.

\begin{lemma}
  \label{lem:cost-0-BMG}
  Let $(\G=(V,E),\sigma)$ be a BMG with $|V|\ge 2$ and $\mathscr{V}$ be the
  connected components of the Aho graph $[\mathscr{R}(\G,\sigma), V]$. Then
  the partition $\mathscr{V}$ of $V$ satisfies $|\mathscr{V}|\ge 2$ and
  $c(\G,\mathscr{V})=0$.
\end{lemma}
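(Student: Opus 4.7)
The plan is to reduce the claim to two facts: the partition given by the connected components of the Aho graph is exactly the root-level partition produced by \texttt{BUILD} for $(\G,\sigma)$, and the resulting Aho tree explains $(\G,\sigma)$. Both fall out of Prop.~\ref{prop:BMG-charac}.

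More concretely, first I would invoke Prop.~\ref{prop:BMG-charac} to obtain that $\mathscr{R}(\G,\sigma)$ is consistent and that $\hat T \coloneqq \Aho(\mathscr{R}(\G,\sigma), V)$ explains $(\G,\sigma)$, i.e.\ $\G(\hat T,\sigma) = (\G,\sigma)$. Next, I would recall the construction of $\hat T$ by \texttt{BUILD}: at the top-level call on $V$, the algorithm forms the Aho graph $[\mathscr{R}(\G,\sigma), V]$ and attaches one subtree under the root $\rho_{\hat T}$ for each of its connected components. Consequently, the vertex sets of the subtrees rooted at the children of $\rho_{\hat T}$ are exactly the elements of $\mathscr{V}$, so that
\[
  \mathscr{V} = \{ L(\hat T(v)) \mid v \in \child_{\hat T}(\rho_{\hat T}) \},
\]
which is precisely the defining condition for $\hat T \in \mathscr{T}(\mathscr{V})$.

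To see $|\mathscr{V}|\ge 2$, I would note that $\hat T$ is phylogenetic, so its root has at least two children. (Alternatively, if the Aho graph were connected for $|V|\ge 2$, \texttt{BUILD} would fail on the top-level call, contradicting consistency of $\mathscr{R}(\G,\sigma)$.) Hence $\mathscr{V}$ is a valid partition with at least two blocks, and in particular $\mathscr{T}(\mathscr{V})\ne\emptyset$.

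Finally, since $\G(\hat T,\sigma)=(\G,\sigma)$, no arc or non-arc of $\G$ differs from $\G(\hat T,\sigma)$, so $U(\G,\hat T)=\emptyset$. Combining this with $\hat T\in\mathscr{T}(\mathscr{V})$ and Def.~\ref{def:unsat-relations} yields
\[
  U(\G,\mathscr{V}) \;=\; \bigcap_{T\in\mathscr{T}(\mathscr{V})} U(\G,T) \;\subseteq\; U(\G,\hat T) \;=\; \emptyset,
\]
hence $c(\G,\mathscr{V})=0$. The only mildly subtle point is identifying the root-level partition of \texttt{BUILD} with the connected components of the Aho graph $[\mathscr{R}(\G,\sigma),V]$, but this is exactly how $\hat T$ is built, so no real obstacle arises.
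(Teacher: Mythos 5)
Your proposal is correct and follows essentially the same route as the paper's proof: apply Prop.~\ref{prop:BMG-charac} to get consistency of $\mathscr{R}(\G,\sigma)$ and that the Aho tree explains $(\G,\sigma)$, note that this tree lies in $\mathscr{T}(\mathscr{V})$ by the construction in \texttt{BUILD}, conclude $|\mathscr{V}|\ge 2$ from consistency (the paper uses your parenthetical argument, citing Aho et al.), and deduce $U(\G,\mathscr{V})\subseteq U(\G,\hat T)=\emptyset$. No gaps.
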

\begin{proof}
  Since $(\G,\sigma)$ is a BMG, we can apply Prop.~\ref{prop:BMG-charac} to
  conclude that $\mathscr{R}\coloneqq\mathscr{R}(\G,\sigma)$ is consistent
  and that $(T,\sigma)\coloneqq (\Aho(\mathscr{R}, V),\sigma)$ explains
  $(\G,\sigma)$, i.e., $\G(T,\sigma)=(\G,\sigma)$. Hence,
  $U(\G,T)=\emptyset$.  From $|V|\ge 2$ and consistency of $\mathscr{R}$,
  it follows that $[\mathscr{R}, V]$ has at least two connected components
  \cite{Aho:81}, and thus, by construction, $|\mathscr{V}|\ge 2$.
  Moreover, we clearly have $T\in \mathscr{T}(\mathscr{V})$ by the
  construction of $T$ via \texttt{BUILD}. Together with
  $U(\G,T)=\emptyset$, the latter implies $U(\G,\mathscr{V})=\emptyset$,
  and thus $c(\G,\mathscr{V})=0$.  \qed
\end{proof}

\begin{lemma}
  \label{lem:cost-0-subgraphs-BMG}
  Let $(\G=(V,E),\sigma)$ be a BMG, and $\mathscr{V}$ a partition of $V$
  such that $c(\G,\mathscr{V})=0$. Then the induced subgraph
  $(\G[V'],\sigma_{|V'})$ is a BMG for every $V'\in\mathscr{V}$.
\end{lemma}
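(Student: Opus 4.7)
The plan is to verify the characterization in Proposition~\ref{prop:BMG-charac}(3) for each induced subgraph $(\G[V'],\sigma_{|V'})$: namely, that it is sf-colored and that its pair of informative and forbidden triples is consistent. Throughout, I will exploit that $c(\G,\mathscr{V})=0$ forces $U(\G,\mathscr{V})=\emptyset$, and hence by Lemma~\ref{lem:unsat-rel-charac} the three sets $U_1(\G,\mathscr{V})$, $U_2(\G,\mathscr{V})$, $U_3(\G,\mathscr{V})$ are all empty. The case $|V'|=1$ is trivial since a single-vertex colored graph with no arcs is a BMG, so I would focus on $|V'|\ge 2$.

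First I would dispose of the triple consistency condition. Since $(\G,\sigma)$ is a BMG, Proposition~\ref{prop:BMG-charac} ensures that $(\mathscr{R}(\G,\sigma), \mathscr{F}(\G,\sigma))$ is consistent. By Observation~\ref{obs:R-restriction}, we have $\mathscr{R}(\G,\sigma)_{|V'} = \mathscr{R}(\G[V'],\sigma_{|V'})$ and $\mathscr{F}(\G,\sigma)_{|V'} = \mathscr{F}(\G[V'],\sigma_{|V'})$. Since these restrictions are subsets of the original triple sets, Observation~\ref{obs:R-F-subsets} then yields that the pair $(\mathscr{R}(\G[V'],\sigma_{|V'}), \mathscr{F}(\G[V'],\sigma_{|V'}))$ is consistent.

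The main task, and the place where the assumption $c(\G,\mathscr{V})=0$ does real work, is to show that $(\G[V'],\sigma_{|V'})$ is sf-colored. Properness is inherited directly from $\sigma$. For the sink-free condition, I would fix $x\in V'$ and an arbitrary color $s\in\sigma(V')\setminus\{\sigma(x)\}$, and produce some $y\in V'$ with $\sigma(y)=s$ and $(x,y)\in E$. If $V'$ contains a unique vertex $y$ of color $s$, then $y\ne x$ (distinct colors), and the emptiness of $U_3(\G,\mathscr{V})$ immediately forces $(x,y)\in E$. If instead $V'$ contains at least two vertices of color $s$, I would appeal to Observation~\ref{fact:allcolors-out} applied to the BMG $(\G,\sigma)$ to obtain some $y\in V$ with $\sigma(y)=s$ and $(x,y)\in E$. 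The potential obstacle here is that this $y$ might lie outside $V'$; but if $y\in V\setminus V'$, then $(x,y)$ would be a cross-partition arc with $\sigma(y)=s\in\sigma(V')$, directly contradicting $U_1(\G,\mathscr{V})=\emptyset$. Hence $y\in V'$ in both subcases.

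Once both conditions of Proposition~\ref{prop:BMG-charac}(3) are established, the conclusion that $(\G[V'],\sigma_{|V'})$ is a BMG is immediate. It is worth noting that the set $U_2(\G,\mathscr{V})$ does not enter this argument at all; only $U_1$ (to keep the sf-condition from escaping $V'$) and $U_3$ (to handle singleton color classes inside $V'$) are needed, which is the main subtlety in writing up the proof cleanly.
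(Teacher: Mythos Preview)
Your proposal is correct and follows essentially the same approach as the paper: verify Proposition~\ref{prop:BMG-charac}(3) by establishing triple consistency via Observations~\ref{obs:R-restriction} and~\ref{obs:R-F-subsets}, and sf-coloredness via the emptiness of the $U$-sets. The one minor difference is that the paper's proof of sf-coloredness is slightly more economical: it argues uniformly by contradiction using only $U_1$ (if $x$ has no out-neighbor of color $s$ inside $V'$, then the out-neighbor guaranteed by sf-coloredness of the full BMG lies outside $V'$ and lands in $U_1$), so your case split and appeal to $U_3$ for singleton color classes, while valid, is not actually needed.
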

\begin{proof}
  Set $\mathscr{R}\coloneqq\mathscr{R}(\G,\sigma)$ and
  $\mathscr{F}\coloneqq\mathscr{F}(\G,\sigma)$ for the sets of informative
  and forbidden triples of $(\G,\sigma)$, respectively.  Since
  $(\G,\sigma)$ is a BMG, we can apply Prop.~\ref{prop:BMG-charac} to
  conclude that $(\mathscr{R},\mathscr{F})$ is consistent. Now we
  choose an arbitrary set $V'\in\mathscr{V}$ and set
  $(\G',\sigma')\coloneqq (\G[V'],\sigma_{|V'})$.  By
  Obs.~\ref{obs:R-restriction}, we obtain
  $\mathscr{R}(\G',\sigma')=\mathscr{R}_{|V'}$ and
  $\mathscr{F}(\G',\sigma')=\mathscr{F}_{|V'}$. This together with the fact
  that $\mathscr{R}_{|V'}\subseteq\mathscr{R}$ and
  $\mathscr{F}_{|V'}\subseteq\mathscr{F}$ and Obs.~\ref{obs:R-F-subsets}
  implies that
  $(\mathscr{R}_{|V'},\mathscr{F}_{|V'})=(\mathscr{R}(\G',\sigma'),
  \mathscr{F}(\G',\sigma'))$ is consistent.
  
  By Prop.~\ref{prop:BMG-charac}, it remains to show that $(\G',\sigma')$
  is sf-colored to prove that it is a BMG.  To this end, assume for
  contradiction that there is a vertex $x\in V'$ and a color
  $s\in\sigma(V')$ such that $x$ has no out-neighbor of color
  $s\ne\sigma(x)$ in $V'$.  However, since the color $s$ is contained in
  $\sigma(V)$ and $(\G,\sigma)$ is a BMG, and thus sf-colored, we conclude
  that there must be a vertex $y\in V\setminus V'$ of color $s$ such that
  $(x,y)\in E$.  In summary, we obtain $(x,y)\in E$, $x\in V'$,
  $y\in V\setminus V'$ and $\sigma(y)=s\in\sigma(V')$.  Thus, we have
  $(x,y)\in U_1(\G,\mathscr{V})$.  Hence, Lemma~\ref{lem:unsat-rel-charac}
  implies that $(x,y)\in U(\G,\mathscr{V})$ and, hence,
  $c(\G,\mathscr{V})>0$; a contradiction.  Therefore, $(\G',\sigma')$ must
  be sf-colored, which concludes the proof.  \qed
\end{proof}

Lemma \ref{lem:cost-0-BMG} and \ref{lem:cost-0-subgraphs-BMG} allow us to
choose the partition $\mathscr{V}$ in each step of
Alg.~\ref{alg:general-local-optimal} in such a way that
Alg.~\ref{alg:general-local-optimal} is consistent, i.e., BMGs remain
unchanged.
\begin{theorem}
  \label{thm:algo-consistent}
  Alg.~\ref{alg:general-local-optimal} is consistent if, in each step
  on $V'$ with $|V'|\ge 2$, the partition $\mathscr{V}$ in
  Line~\ref{line:min-cost} is chosen according to one of the following
  rules:
  \begin{enumerate}
    \item $\mathscr{V}$ has minimal \ur-cost among all possible partitions
    $\mathscr{V}'$ of $V'$ with $|\mathscr{V}'|\ge 2$.
    \item If the Aho graph $[\mathscr{R}(\G^*[V'],\sigma_{|V'}),V']$ is
    disconnected with the set of connected components $\mathscr{V}_{\Aho}$,
    and moreover $c(\G^*[V'],\mathscr{V}_{\Aho})=0$, then
    $\mathscr{V}=\mathscr{V}_{\Aho}$.
  \end{enumerate}
\end{theorem}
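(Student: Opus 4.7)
My plan is to prove consistency by induction along the recursion tree of Algorithm~\ref{alg:general-local-optimal}, using the invariant that at the start of every recursive call on a set $V'$ the currently edited graph restricted to $V'$ equals the restriction of the input $(\G,\sigma)$ to $V'$, and (under the assumption that $(\G,\sigma)$ is a BMG) is itself a BMG. The base case $|V'|=1$ is immediate since no partition is formed and no edit is applied.

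For the inductive step with $|V'|\ge 2$, I would first invoke Lemma~\ref{lem:cost-0-BMG} on $(\G^*[V'],\sigma_{|V'})$ to produce the partition $\mathscr{V}_{\Aho}$ consisting of the connected components of the Aho graph $[\mathscr{R}(\G^*[V'],\sigma_{|V'}),V']$; this partition satisfies $|\mathscr{V}_{\Aho}|\ge 2$ and $c(\G^*[V'],\mathscr{V}_{\Aho})=0$. Under Rule~1, the partition $\mathscr{V}$ chosen in Line~\ref{line:min-cost} has minimum \ur-cost, hence $0 \le c(\G^*[V'],\mathscr{V}) \le c(\G^*[V'],\mathscr{V}_{\Aho}) = 0$, so $U(\G^*[V'],\mathscr{V})=\emptyset$ and no arc is modified in Line~\ref{line:apply-edits}. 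Under Rule~2, the Aho graph is disconnected (a standard consequence of the \texttt{BUILD} construction for consistent triple sets on at least two leaves, cf.\ \cite{Aho:81}) and carries \ur-cost $0$, so the rule's precondition is met; it forces $\mathscr{V}=\mathscr{V}_{\Aho}$ and again no edit is applied. In either case Lemma~\ref{lem:cost-0-subgraphs-BMG} ensures that $(\G^*[V_i],\sigma_{|V_i})$ is a BMG for every $V_i\in\mathscr{V}$, so the inductive hypothesis applies to each subcall.

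To close the argument I would note that $U(\G^*[V'],\mathscr{V})$ only ever contains pairs with both endpoints in $V'$, and deeper recursions on a block $V_i$ only modify arcs with both endpoints in $V_i$; hence edits made along one branch cannot disturb what a sibling branch eventually inspects, which is what the invariant actually needs to survive sibling calls. Unrolling the induction, no edit is ever applied when the input is a BMG, so $\G^*=\G$ at termination and the algorithm is consistent. The main thing to be careful about is that Rule~2 is stated only conditionally, so I must verify that its precondition is triggered in every inductive step; this reduces to disconnectedness of the Aho graph for any consistent triple set with at least two leaves, together with Lemma~\ref{lem:cost-0-BMG}. Once this is in place, the rest of the argument is a direct appeal to Lemmas~\ref{lem:cost-0-BMG} and~\ref{lem:cost-0-subgraphs-BMG}.
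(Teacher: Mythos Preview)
Your proposal is correct and follows essentially the same approach as the paper: both proofs proceed by induction along the recursion, invoke Lemma~\ref{lem:cost-0-BMG} to obtain a cost-zero partition (namely $\mathscr{V}_{\Aho}$), argue that either rule then selects a partition with \ur-cost zero, and use Lemma~\ref{lem:cost-0-subgraphs-BMG} to propagate the BMG property to the recursive subcalls. The only cosmetic difference is that you track an explicit invariant (``the edited graph restricted to $V'$ still equals the input'') and note sibling-branch non-interference directly, whereas the paper phrases the same conclusion via Cor.~\ref{cor:sum-c} (total edit cost is the sum of the per-step \ur-costs, hence zero).
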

\begin{proof}
  We have to show that the final edited graph $(\G^*,\sigma)$ returned in
  Line~\ref{line:return-edited} equals the input graph $(\G=(V,E),\sigma)$
  whenever $(\G,\sigma)$ already is a BMG, i.e., nothing is edited.  Thus
  suppose that $(\G,\sigma)$ is a BMG and first consider the top-level
  recursion step on $V$ (where initially $\G^*=\G$ still holds at
  Line~\ref{line:init-G*}).  If $|V|=1$, neither $(\G,\sigma)$ nor
  $(\G^*,\sigma)$ contain any arcs, and thus, the edit cost is trivially
  zero. Now suppose $|V|\ge 2$.  Since $(\G,\sigma)$ is a BMG,
  Lemma~\ref{lem:cost-0-BMG} guarantees the existence of a partition
  $\mathscr{V}$ satisfying $c(\G,\mathscr{V})=0$, in particular, the
  connected components $\mathscr{V}_{\Aho}$ of the Aho graph
  $[\mathscr{R}(\G,\sigma), V]$ form such a partition.  Hence, for both
  rules~(1) and~(2), we choose a partition $\mathscr{V}$ with (minimal)
  \ur-cost $c(\G,\mathscr{V})=0$.  Now,
  Lemma~\ref{lem:cost-0-subgraphs-BMG} implies that the induced subgraph
  $(\G[V'],\sigma_{|V'})$ is a BMG for every $V'\in\mathscr{V}$.  Since we
  recurse on these subgraphs, we can repeat the arguments above along the
  recursion hierarchy to conclude that the \ur-cost
  $c(\G^*[V'],\mathscr{V}')$ vanishes in every recursion step.  By
  Cor.~\ref{cor:sum-c}, the total edit cost of
  Alg.~\ref{alg:general-local-optimal} is the sum of the \ur-costs
  $c(\G^*[V'],\mathscr{V}')$ in each recursion step, and thus, also zero.
  Therefore, we conclude that we still have $(\G^*,\sigma)=(\G,\sigma)$ in
  Line~\ref{line:return-edited}.  \qed
\end{proof}

By Thm.~\ref{thm:algo-consistent}, Alg.~\ref{alg:general-local-optimal} is
consistent whenever the choice of $\mathscr{V}$ minimizes the \ur-cost of
$\mathscr{V}$ in each step. We shall see in Sec.~\ref{sect:UR-NP} that
minimizing $c(\G,\mathscr{V})$ is a difficult optimization problem in
general. Therefore, a good heuristic will be required for this step. This,
however, may not guarantee consistency of
Alg.~\ref{alg:general-local-optimal} in general.  The second rule in
Thm.~\ref{thm:algo-consistent} provides a remedy: the Aho graph
$[\mathscr{R}(\G^*[V'],\sigma_{|V'}), V']$ can be computed
efficiently. Whenever $[\mathscr{R}(\G^*[V'],\sigma_{|V'}), V']$ is not
connected, the partition $\mathscr{V}_{\Aho}$ defined by the connected
components $[\mathscr{R}(\G^*[V'],\sigma_{|V'}), V']$ is chosen provided it
has \ur-cost zero.  This procedure is effectively a generalization of the
algorithm \texttt{BUILD} using as input the set of informative triples
$\mathscr{R}(\G,\sigma)$ of a properly vertex-colored graph
$(\G,\sigma)$. If $(\G,\sigma)$ is already a BMG, then the recursion in
Alg.~\ref{alg:general-local-optimal} is exactly the same as in
\texttt{BUILD}: it recurses on the connected components of the Aho graph
(cf.\ Prop.~\ref{prop:BMG-charac}). We can summarize this discussion as
\begin{corollary}
  $(\G,\sigma)$ is a BMG if and only if, in every step of the
  \texttt{BUILD} algorithm operating on $\mathscr{R}(\G,\sigma)_{|V'}$ and
  $V'$, either $|V'|=1$, or $c(\G^*[V'], \mathscr{V}_{\Aho})=0$ for the
  connected component partition $\mathscr{V}_{\Aho}$ of the disconnected
  Aho graph $[\mathscr{R}(\G^*[V'],\sigma_{|V'}), V']$.
\end{corollary}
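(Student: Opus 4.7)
The plan is to prove the two directions separately, using Lemmas~\ref{lem:cost-0-BMG} and~\ref{lem:cost-0-subgraphs-BMG} for the ``only if'' implication and Thms.~\ref{thm:algo-consistent} and~\ref{thm:algo-tree} for the ``if'' implication. In both directions the key observation is that the described run of \texttt{BUILD} on $\mathscr{R}(\G,\sigma)$ can be identified with a run of Alg.~\ref{alg:general-local-optimal} using rule~2 of Thm.~\ref{thm:algo-consistent} at every step, provided $\G^*=\G$ is maintained throughout.

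For the \emph{only if} direction, assume $(\G,\sigma)$ is a BMG and proceed by induction on $|V'|$ along the recursion hierarchy of \texttt{BUILD}. The base case $|V'|=1$ is trivial. Otherwise, Lemma~\ref{lem:cost-0-BMG} applied to the current BMG guarantees that the connected-component partition $\mathscr{V}_{\Aho}$ of its Aho graph satisfies $|\mathscr{V}_{\Aho}|\ge 2$ and has \ur-cost zero, which establishes the condition of the corollary at the current step. By Obs.~\ref{obs:R-restriction}, the next-level \texttt{BUILD} call on $V'\in\mathscr{V}_{\Aho}$ operates exactly on the informative triples of the induced subgraph $(\G[V'],\sigma_{|V'})$, which by Lemma~\ref{lem:cost-0-subgraphs-BMG} is again a BMG; hence the inductive hypothesis applies. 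Since the \ur-cost is zero at every step, no arcs are edited and $\G^*=\G$ throughout, so the hypothesis on $c(\G^*[V'],\mathscr{V}_{\Aho})$ is genuinely the same as $c(\G[V'],\mathscr{V}_{\Aho})=0$.

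For the \emph{if} direction, I would simulate Alg.~\ref{alg:general-local-optimal} using rule~2 of Thm.~\ref{thm:algo-consistent}. The hypothesis implies that at every non-trivial step the Aho graph is disconnected (so $\mathscr{V}_{\Aho}$ is a partition with at least two blocks and \texttt{BUILD} does not abort) and that $U(\G^*[V'],\mathscr{V}_{\Aho})=\emptyset$, so Line~\ref{line:apply-edits} makes no changes and $\G^*=\G$ is preserved. Alg.~\ref{alg:general-local-optimal} therefore terminates with a tree $T$ equal to $\Aho(\mathscr{R}(\G,\sigma),V)$, and Thm.~\ref{thm:algo-tree} gives $(\G,\sigma)=(\G^*,\sigma)=\G(T,\sigma)$, so $(\G,\sigma)$ is a BMG.

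The main point requiring care — and therefore the only real obstacle — is the self-referential formulation in terms of $\G^*$ rather than $\G$: one has to verify that the zero-\ur-cost assumption is self-consistent with $\G^*=\G$ at every recursion step. This is immediate from Line~\ref{line:apply-edits} of Alg.~\ref{alg:general-local-optimal} together with the assumption $c(\G^*[V'],\mathscr{V}_{\Aho})=0$, after which the rest of the argument reduces to straightforward bookkeeping on top of the machinery of Sec.~\ref{sect:UR}.
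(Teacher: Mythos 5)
Your proof is correct and follows essentially the same route as the paper, which presents this corollary as a summary of the preceding discussion: the ``only if'' direction is exactly the inductive combination of Lemma~\ref{lem:cost-0-BMG}, Lemma~\ref{lem:cost-0-subgraphs-BMG} and Obs.~\ref{obs:R-restriction} (the content of rule~2 in Thm.~\ref{thm:algo-consistent}), and the ``if'' direction is the observation that a zero-cost run of Alg.~\ref{alg:general-local-optimal} coincides with \texttt{BUILD} and leaves $\G^*=\G$, so Thm.~\ref{thm:algo-tree} applies. Your explicit handling of the self-referential $\G^*$ versus $\G$ issue is exactly the bookkeeping the paper leaves implicit.
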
    
For recursion steps in which the Aho graph
$[\mathscr{R}(\G^*[V'],\sigma_{|V'}), V']$ is connected, and possibly also
in steps with non-zero \ur-cost, another (heuristic) rule has to be
employed. As a by-product, we obtain an approach for the case that
$\mathscr{R}(\G,\sigma)$ is consistent: Following \texttt{BUILD} yields the
approximation $\G(\Aho(\mathscr{R}(\G,\sigma),V(\G)),\sigma)$ as a natural
choice.

\section{Binary-explainable BMGs}
\label{sect:beBMG}

Phylogenetic trees are often binary. Multifurcations are in many cases --
but not always -- the consequence of insufficient data
\cite{DeSalle:94,Sayyari:18,Schaller:20p}. It is therefore of practical
interest to consider BMGs that can be explained by a binary tree:
\begin{definition}
  A properly colored digraph $(\G,\sigma)$ is a \emph{binary-explainable best
    match graph} (\emph{beBMG}) if there is a binary tree $T$ such that
  $\G(T,\sigma)=(\G,\sigma)$.
\end{definition}

Correspondingly, it is of interest to edit a properly colored digraph to a 
beBMG, which translates to the following decision problem:
\begin{problem}[\PROBLEM{$\ell$-BMG Editing restricted to 
    Binary-Explainable\newline
    Graphs (EBEG)}]\ \\
  \label{prblm:ell-bmg-EBEG}
  \begin{tabular}{ll}
    \emph{Input:}    & A properly $\ell$-colored digraph $(\G =(V,E),\sigma)$
    and an integer $k$.\\
    \emph{Question:} & Is there a subset $F\subseteq V\times V \setminus
    \{(v,v)\mid v\in V\}$ such
    that $|F|\leq k$ and\\ & $(\G\symdiff F,\sigma)$
    is a binary-explainable $\ell$-BMG?
  \end{tabular}
\end{problem}
We call the corresponding completion and deletion problem
\PROBLEM{$\ell$-BMG CBEG} and \PROBLEM{$\ell$-BMG DBEG}, respectively.  As
their more general counterparts, all three variants are NP-complete as well,
cf.\ \cite[Cor.~6.2]{Schaller:20y} and \cite[Thm.~5]{Schaller:20p}.

Since the recursive partitioning in Alg.~\ref{alg:general-local-optimal}
defines a tree that explains the edited BMG, see Thm.~\ref{thm:algo-tree},
it is reasonable to restrict the optimization of $\mathscr{V}$ in
Line~\ref{line:min-cost} to bipartitions. The problem still remains hard,
however, since the corresponding decision problem (problem \PROBLEM{BPURC}
in Sec.~\ref{sect:UR-NP}) is NP-complete as shown in
Thm.~\ref{thm:BPURC-NPc} below.  Similar to BMGs in general, beBMGs have a
characterization in terms of informative triples:
\begin{proposition}{\cite[Thm.~3.5]{Schaller:20p}}
  \label{prop:Rbin-MAIN}
  A properly vertex-colored graph $(\G,\sigma)$ with vertex set $V$ is
  binary-explainable if and only if (i) $(\G,\sigma)$ is sf-colored, and
  (ii) the triple set $\Rbin(\G,\sigma)$ is consistent. In this case, the
  BMG $(\G,\sigma)$ is explained by every refinement of the \emph{binary
    refinable tree} $(\Aho(\Rbin(\G,\sigma), V), \sigma)$.
\end{proposition}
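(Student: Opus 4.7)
The plan is to prove the two directions of the equivalence separately, and then extract the additional claim about refinements of $T_b \coloneqq \Aho(\Rbin(\G,\sigma), V)$. Throughout I abbreviate $\mathscr{R} \coloneqq \mathscr{R}(\G,\sigma)$ and $\mathscr{F} \coloneqq \mathscr{F}(\G,\sigma)$.

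For the forward direction, I would assume $(\G,\sigma)$ is explained by a binary tree $T$. Then $(\G,\sigma)$ is a BMG, so sf-coloredness is immediate from Fact~\ref{fact:allcolors-out}; by Prop.~\ref{prop:BMG-charac}, $T$ agrees with $(\mathscr{R}, \mathscr{F})$, i.e., displays all of $\mathscr{R}$ and none of $\mathscr{F}$. To get consistency of $\Rbin$ it remains to show that $T$ displays each added triple $bb'|a$ obtained from some $ab|b'\in\mathscr{F}$ with $\sigma(b)=\sigma(b')$. Here the binary hypothesis is doing the key work: on the three pairwise distinct leaves $a, b, b'$, the tree $T$ must display exactly one of $ab|b'$, $ab'|b$, $bb'|a$. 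The first is forbidden by assumption, and the second is forbidden by symmetry of the definition of $\mathscr{F}$ in $b$ and $b'$, so $T$ is forced to display $bb'|a$. Hence $T$ displays all of $\Rbin$ and the triple set is consistent.

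For the backward direction, suppose $(\G,\sigma)$ is sf-colored and $\Rbin$ is consistent. Then $T_b$ is well-defined and displays every triple in $\Rbin$. Let $T$ be any refinement of $T_b$; since refinement only subdivides multifurcations and therefore preserves all $\preceq_T$-comparisons of $\lca$s, $T$ also displays every triple in $\Rbin$, in particular every triple in $\mathscr{R}$. To see that $T$ displays no triple in $\mathscr{F}$, take $ab|b'\in\mathscr{F}$; by the definition of $\Rbin$ we have $bb'|a\in\Rbin$, so $T$ displays $bb'|a$, which entails $\lca_T(b,b')\prec_T \lca_T(a,b)=\lca_T(a,b')$. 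This is incompatible with $T$ displaying $ab|b'$, which would require $\lca_T(a,b)\prec_T\lca_T(a,b')=\lca_T(b,b')$. Thus $T$ agrees with $(\mathscr{R},\mathscr{F})$. Combined with sf-coloredness, Prop.~\ref{prop:BMG-charac} then yields both that $(\G,\sigma)$ is a BMG and that every such $T$ explains it; choosing $T$ to be a binary refinement of $T_b$ (which always exists, since any multifurcation can be resolved arbitrarily) proves binary-explainability.

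The main subtle point is the interaction between the binary hypothesis and the extra triples in $\Rbin$: the forward direction uses essentially that a binary tree must display one of three triples on any three distinct leaves, which is precisely why the additional triples $bb'|a$ are what is needed to make the characterization work (and why non-binary explainability does not force them). The backward direction only needs the much weaker consequence that displaying $bb'|a$ rules out displaying $ab|b'$, which is what lets the argument apply uniformly to every refinement of $T_b$ rather than only to binary ones.
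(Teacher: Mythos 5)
Your proof is correct. Note that the paper does not prove Prop.~\ref{prop:Rbin-MAIN} itself but imports it from \cite[Thm.~3.5]{Schaller:20p}, so there is no in-paper argument to compare against; your derivation from Prop.~\ref{prop:BMG-charac} is sound and self-contained. In particular, the forward direction correctly isolates where binarity is used (a binary tree displays exactly one triple on the three pairwise distinct leaves $a,b,b'$, and both $ab|b'$ and $ab'|b$ lie in $\mathscr{F}(\G,\sigma)$ by the symmetry of its definition), and the backward direction correctly combines three facts: a refinement of $\Aho(\Rbin(\G,\sigma),V)$ displays every triple the latter displays; displaying $bb'|a\in\Rbin(\G,\sigma)$ excludes displaying the forbidden triple $ab|b'$; and agreement with $(\mathscr{R}(\G,\sigma),\mathscr{F}(\G,\sigma))$ together with sf-coloring yields, via Prop.~\ref{prop:BMG-charac}, both that $(\G,\sigma)$ is a BMG and that every such refinement (in particular any binary one, which always exists) explains it.
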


Using Prop.~\ref{prop:Rbin-MAIN}, we can apply analogous arguments as in
the proof of Lemma~\ref{lem:cost-0-BMG} for $\Rbin(\G,\sigma)$ instead of
$\mathscr{R}(\G,\sigma)$ to obtain
\begin{corollary}
  \label{cor:cost-0-beBMG}
  Let $(\G=(V,E),\sigma)$ be a beBMG with $|V|\ge 2$ and $\mathscr{V}$ be
  the connected components of the Aho graph $[\Rbin(\G,\sigma), V]$. Then
  the partition $\mathscr{V}$ of $V$ satisfies $|\mathscr{V}|\ge 2$ and
  $c(\G,\mathscr{V})=0$.
\end{corollary}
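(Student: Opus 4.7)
The plan is to mirror the proof of Lemma~\ref{lem:cost-0-BMG} step by step, substituting $\Rbin(\G,\sigma)$ for $\mathscr{R}(\G,\sigma)$ and invoking Prop.~\ref{prop:Rbin-MAIN} instead of Prop.~\ref{prop:BMG-charac}. Write $\mathscr{R}^{\textrm{B}}\coloneqq\Rbin(\G,\sigma)$ and let $T\coloneqq\Aho(\mathscr{R}^{\textrm{B}}, V)$.

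First I would use Prop.~\ref{prop:Rbin-MAIN} to conclude that $\mathscr{R}^{\textrm{B}}$ is consistent and that $(T,\sigma)$ explains $(\G,\sigma)$, since $T$ is trivially a refinement of the binary refinable tree (namely itself). In particular, $\G(T,\sigma)=(\G,\sigma)$ and therefore $U(\G,T)=\emptyset$.

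Next I would argue that $|\mathscr{V}|\ge 2$. Because $|V|\ge 2$ and $\mathscr{R}^{\textrm{B}}$ is consistent, the classical result underlying \texttt{BUILD} \cite{Aho:81} guarantees that the Aho graph $[\mathscr{R}^{\textrm{B}}, V]$ is disconnected, so its connected components form a partition with at least two parts, which is exactly $\mathscr{V}$.

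Finally I would establish $c(\G,\mathscr{V})=0$. By construction, the top-level step of \texttt{BUILD} applied to $\mathscr{R}^{\textrm{B}}$ uses precisely the connected-component partition $\mathscr{V}$ of the Aho graph to define the children of the root of $T$; that is, $\mathscr{V}=\{L(T(v))\mid v\in\child_{T}(\rho_T)\}$, hence $T\in\mathscr{T}(\mathscr{V})$. Combining this with $U(\G,T)=\emptyset$ and the defining intersection in Def.~\ref{def:unsat-relations} gives
\[
U(\G,\mathscr{V}) \;=\; \bigcap_{T'\in\mathscr{T}(\mathscr{V})} U(\G,T') \;\subseteq\; U(\G,T) \;=\; \emptyset,
\]
so $c(\G,\mathscr{V})=|U(\G,\mathscr{V})|=0$, as desired.

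There is no real obstacle here; the only point worth noting is that Prop.~\ref{prop:Rbin-MAIN} formally states that $(\G,\sigma)$ is explained by every \emph{refinement} of the binary refinable tree, so one has to observe that $T$ itself qualifies as the trivial refinement in order to conclude $\G(T,\sigma)=(\G,\sigma)$. Everything else is a direct transcription of the argument for Lemma~\ref{lem:cost-0-BMG}.
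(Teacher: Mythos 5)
Your proof is correct and follows exactly the route the paper intends: the paper derives Cor.~\ref{cor:cost-0-beBMG} by "applying analogous arguments as in the proof of Lemma~\ref{lem:cost-0-BMG} for $\Rbin(\G,\sigma)$ instead of $\mathscr{R}(\G,\sigma)$" via Prop.~\ref{prop:Rbin-MAIN}, which is precisely your transcription. Your added observation that $\Aho(\Rbin(\G,\sigma),V)$ qualifies as a (trivial) refinement of itself, so that Prop.~\ref{prop:Rbin-MAIN} yields $\G(T,\sigma)=(\G,\sigma)$, is a correct and welcome clarification of the only point left implicit in the paper.
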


Since a beBMG $(\G,\sigma)$ is explained by every refinement of the Aho
tree constructed from $\Rbin(\G,\sigma)$ (cf.\ Prop.~\ref{prop:Rbin-MAIN}),
we can obtain a slightly more general result.
\begin{lemma}
  \label{lem:cost-0-beBMG-coarsements}
  Let $(\G=(V,E),\sigma)$ be a beBMG with $|V|\ge 2$ and $\mathscr{V}$ be
  the connected components of the Aho graph $[\Rbin(\G,\sigma),V]$. Then,
  every coarse-graining $\mathscr{V}'$ of $\mathscr{V}$ with
  $|\mathscr{V}'|\ge 2$ satisfies $c(\G,\mathscr{V}')=0$.
\end{lemma}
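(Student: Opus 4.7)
The plan is to exhibit, for every coarse-graining $\mathscr{V}'$ of $\mathscr{V}$ with $|\mathscr{V}'|\ge 2$, a single tree $T'\in\mathscr{T}(\mathscr{V}')$ which already explains $(\G,\sigma)$. Once this is done, $U(\G,T')=\emptyset$, and since $U(\G,\mathscr{V}')=\bigcap_{S\in\mathscr{T}(\mathscr{V}')}U(\G,S)\subseteq U(\G,T')$, the statement $c(\G,\mathscr{V}')=0$ follows immediately.

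To construct $T'$, I first set $T\coloneqq \Aho(\Rbin(\G,\sigma),V)$. By the definition of the \texttt{BUILD} recursion, the connected components of the Aho graph $[\Rbin(\G,\sigma),V]$ are exactly the leaf sets of the subtrees hanging off $\rho_T$; hence $\mathscr{V}=\{L(T(v))\mid v\in\child_T(\rho_T)\}$. Writing $\mathscr{V}=\{V_1,\dots,V_k\}$ and $\mathscr{V}'=\{V'_1,\dots,V'_m\}$, each $V'_j$ is a union of some $V_i$'s. I build $T'$ from $T$ as follows: for every $V'_j$ that contains at least two of the $V_i$, insert a new vertex $u_j$ between $\rho_T$ and the corresponding children $v\in\child_T(\rho_T)$ with $L(T(v))\subseteq V'_j$ (so that these subtrees are now attached to $u_j$, and $u_j$ is attached to $\rho_{T'}$); for every $V'_j$ that equals some single $V_i$, leave the corresponding child of the root untouched. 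The resulting tree $T'$ is phylogenetic (every $u_j$ has at least two children by construction, and $\rho_{T'}$ has $m=|\mathscr{V}'|\ge 2$ children), and by construction $\{L(T'(v))\mid v\in\child_{T'}(\rho_{T'})\}=\mathscr{V}'$, so $T'\in\mathscr{T}(\mathscr{V}')$.

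The key observation is that $T'$ is a refinement of $T$: contracting each of the newly inserted edges $\rho_{T'}u_j$ recovers $T$. Since $(\G,\sigma)$ is a beBMG, Prop.~\ref{prop:Rbin-MAIN} tells us that every refinement of the binary refinable tree $T=\Aho(\Rbin(\G,\sigma),V)$ explains $(\G,\sigma)$. In particular, $\G(T',\sigma)=(\G,\sigma)$, so $U(\G,T')=\emptyset$. Combined with the inclusion from the first paragraph, we obtain $U(\G,\mathscr{V}')=\emptyset$ and hence $c(\G,\mathscr{V}')=0$.

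The only step that requires any real care is the verification that the tree $T'$ I build is truly a refinement of $T$ in the sense of the paper (suppression of degree-two vertices, restriction, and edge contractions). This is the step I expect to be the main technical obstacle; however, because the modification is purely local at the root and each inserted vertex $u_j$ already has at least two children, the only contractions needed are those of the $k-m$ edges $\rho_{T'}u_j$ that were just introduced, so the argument should reduce to a short combinatorial check rather than a delicate case analysis.
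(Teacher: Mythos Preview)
Your proposal is correct and follows essentially the same approach as the paper: both construct the tree $T=\Aho(\Rbin(\G,\sigma),V)$, insert new vertices between the root and groups of its children to realize the coarse-graining $\mathscr{V}'$, observe that the result is a refinement of $T$ lying in $\mathscr{T}(\mathscr{V}')$, and then invoke Prop.~\ref{prop:Rbin-MAIN} to conclude that this refinement still explains $(\G,\sigma)$, forcing $U(\G,\mathscr{V}')=\emptyset$. The only cosmetic difference is that the paper treats the trivial case $\mathscr{V}'=\mathscr{V}$ separately via Cor.~\ref{cor:cost-0-beBMG}, whereas your uniform construction already covers it (no vertices are inserted and $T'=T$).
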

\begin{proof}
  First note that $\Rbin(\G,\sigma)$ is consistent by
  Prop.~\ref{prop:Rbin-MAIN} since $(\G,\sigma)$ is a beBMG. Therefore,
  $|V|\ge 2$ implies $|\mathscr{V}|\ge 2$ \cite{Aho:81}.  For the trivial
  coarse-graining $\mathscr{V}'=\mathscr{V}$, Cor.~\ref{cor:cost-0-beBMG}
  already implies the statement.  Now assume $\mathscr{V}'\ne\mathscr{V}$.
  Observe that the tree
  $(T,\sigma) \coloneqq (\Aho(\Rbin(\G,\sigma), V), \sigma)$ exists and
  explains $(\G,\sigma)$ by Prop.~\ref{prop:Rbin-MAIN}.  Moreover, there
  is, by construction, a one-to-one correspondence between the children $v_i$
  of its root $\rho$ and the elements in $V_i\in\mathscr{V}$ given by
  $L(T(v_i))=V_i$.  We construct a refinement (tree) $T'$ of $T$ as follows:
  Whenever we have multiple sets $V_i\in\mathscr{V}$ that are subsets of
  the same set $V_j\in\mathscr{V}'$, we remove the edges $\rho v_i$ to the
  corresponding vertices $v_i\in\child_{T}(\rho)$ in $T$, and collectively
  connect these $v_i$ to a newly created vertex $w_j$. These vertices $w_j$ are
  then reattached to the root $\rho$.  Since $|\mathscr{V}'|\ge 2$ by 
  assumption, the
  so-constructed tree $T'$ is still phylogenetic. Moreover, it satisfies
  $\mathscr{V}'=\{L(T'(v)) \mid v\in\child_{T'}(\rho)\}$, and thus,
  $T'\in \mathscr{T}(\mathscr{V}')$.  It is a refinement of $T$ since
  contraction of the edges $\rho w_j$ again yields $T$.  Hence, we can
  apply Prop.~\ref{prop:Rbin-MAIN} to conclude that $(T',\sigma)$ also
  explains $(\G,\sigma)$.  It follows immediately that
  $U(\G,T')=\emptyset$. The latter together with
  $T'\in \mathscr{T}(\mathscr{V}')$ implies $U(\G,\mathscr{V}')=\emptyset$,
  and thus $c(\G,\mathscr{V}')=0$.  \qed
\end{proof}

We are now in the position to formulate an analogue of
Thm.~\ref{thm:algo-consistent} for variants of
Alg.~\ref{alg:general-local-optimal} that aim to edit a properly-colored
digraph $(\G,\sigma)$ to a beBMG.
\begin{theorem}
  \label{thm:algo-consistent-binary}
  Alg.~\ref{alg:general-local-optimal} is consistent for beBMGs
  $(\G,\sigma)$ if, in each step on $V'$ with $|V'|\ge 2$, a bipartition
  $\mathscr{V}$ in Line~\ref{line:min-cost} is chosen according to one of
  the following rules:
  \begin{enumerate}
    \item $\mathscr{V}$ has minimal \ur-cost among all possible bipartitions
    $\mathscr{V}'$ of $V'$.
    \item If the Aho graph $[\Rbin(\G^*[V'],\sigma_{|V'}),V']$ is
    disconnected with the set of connected components $\mathscr{V}_{\Aho}$,
    and moreover $c(\G^*[V'],\mathscr{V}_{\Aho})=0$, then $\mathscr{V}$ is
    a coarse-graining of $\mathscr{V}_{\Aho}$.
  \end{enumerate}
\end{theorem}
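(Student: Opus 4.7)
The plan is to mimic the proof of Theorem~\ref{thm:algo-consistent}, but with $\Rbin(\vec G,\sigma)$ replacing $\mathscr{R}(\vec G,\sigma)$ and with the additional requirement that the chosen partition is a bipartition. As in that proof, it suffices to show that whenever the input $(\vec G,\sigma)$ is a beBMG, every recursion step on some $V'$ with $|V'|\ge 2$ chooses a bipartition $\mathscr V$ with $c(\vec G^*[V'],\mathscr V)=0$, so that by Cor.~\ref{cor:sum-c} the total edit cost vanishes and $(\vec G^*,\sigma)=(\vec G,\sigma)$ is returned in Line~\ref{line:return-edited}.

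The first step is to observe that a suitable bipartition always exists. By Prop.~\ref{prop:Rbin-MAIN} the triple set $\Rbin(\vec G,\sigma)$ is consistent, so by a standard property of the \texttt{BUILD} construction \cite{Aho:81} the Aho graph $[\Rbin(\vec G,\sigma),V]$ is disconnected, i.e.\ the partition $\mathscr V_{\Aho}$ into its connected components satisfies $|\mathscr V_{\Aho}|\ge 2$. By Cor.~\ref{cor:cost-0-beBMG}, $c(\vec G,\mathscr V_{\Aho})=0$, and by Lemma~\ref{lem:cost-0-beBMG-coarsements} every coarse-graining $\mathscr V'$ of $\mathscr V_{\Aho}$ with $|\mathscr V'|\ge 2$ still satisfies $c(\vec G,\mathscr V')=0$; in particular, any bipartition obtained by lumping the blocks of $\mathscr V_{\Aho}$ into two non-empty groups has \ur-cost $0$. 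Under rule~(2) we explicitly choose such a bipartition, and under rule~(1) we note that the minimum of $c$ over all bipartitions is thus bounded above by $0$, hence equals $0$.

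The second step is a beBMG-analogue of Lemma~\ref{lem:cost-0-subgraphs-BMG}: if $(\vec G,\sigma)$ is a beBMG and $\mathscr V$ is a partition of $V$ with $c(\vec G,\mathscr V)=0$, then every induced subgraph $(\vec G[V'],\sigma_{|V'})$, $V'\in\mathscr V$, is again a beBMG. The argument is parallel to the proof of Lemma~\ref{lem:cost-0-subgraphs-BMG}: by Prop.~\ref{prop:Rbin-MAIN}, $\Rbin(\vec G,\sigma)$ is consistent; by Obs.~\ref{obs:R-restriction}, $\Rbin(\vec G[V'],\sigma_{|V'})=\Rbin(\vec G,\sigma)_{|V'}$, and consistency is inherited by restriction (the tree that displays $\Rbin(\vec G,\sigma)$ also displays every sub-triple on $V'$). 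It remains to show that $(\vec G[V'],\sigma_{|V'})$ is sf-colored. If some $x\in V'$ had no out-neighbour of some colour $s\in\sigma(V')\setminus\{\sigma(x)\}$ in $V'$, then the sf-colouring of $(\vec G,\sigma)$ (guaranteed by Prop.~\ref{prop:Rbin-MAIN}) would supply a vertex $y\in V\setminus V'$ with $\sigma(y)=s\in\sigma(V')$ and $(x,y)\in E$, yielding $(x,y)\in U_1(\vec G,\mathscr V)\subseteq U(\vec G,\mathscr V)$ by Lemma~\ref{lem:unsat-rel-charac}, which contradicts $c(\vec G,\mathscr V)=0$.

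With these two ingredients in hand, the inductive argument of Theorem~\ref{thm:algo-consistent} goes through verbatim: the top-level step chooses a bipartition of $V$ of \ur-cost $0$ (so no edits are performed), every block $V'\in\mathscr V$ induces a beBMG on $(\vec G^*[V'],\sigma_{|V'})=(\vec G[V'],\sigma_{|V'})$, and the same reasoning applies recursively; by Cor.~\ref{cor:sum-c} the accumulated edit cost is $0$. The only genuinely new work is the beBMG-analogue of Lemma~\ref{lem:cost-0-subgraphs-BMG}, and in particular verifying that the Aho partition may legitimately be coarsened to a bipartition without losing \ur-cost zero; this is the one step I expect to require the most care, but Lemma~\ref{lem:cost-0-beBMG-coarsements} has already done the heavy lifting.
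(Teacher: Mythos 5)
Your proposal is correct and follows essentially the same route as the paper's proof: choose a cost-zero bipartition via Cor.~\ref{cor:cost-0-beBMG} and Lemma~\ref{lem:cost-0-beBMG-coarsements} (covering both rules), show the blocks induce beBMGs (consistency of the restricted $\Rbin$ via Obs.~\ref{obs:R-restriction}/\ref{obs:R-F-subsets} plus sf-coloring, then Prop.~\ref{prop:Rbin-MAIN}), and recurse with Cor.~\ref{cor:sum-c}. The only cosmetic difference is that you re-derive the sf-coloring step directly, whereas the paper obtains it by invoking Lemma~\ref{lem:cost-0-subgraphs-BMG} together with Prop.~\ref{prop:BMG-charac}; the substance is identical.
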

\begin{proof}
  We have to show that the final edited graph $(\G^*,\sigma)$ returned in
  Line~\ref{line:return-edited} equals the input graph $(\G=(V,E),\sigma)$
  whenever $(\G,\sigma)$ already is a beBMG, i.e., nothing is edited.  Thus
  suppose that $(\G,\sigma)$ is a beBMG and first consider the top-level
  recursion step on $V$ (where initially $\G^*=\G$ still holds at
  Line~\ref{line:init-G*}).  If $|V|=1$, neither $(\G,\sigma)$ nor
  $(\G^*,\sigma)$ contain any arcs, and thus, the edit cost is trivially
  zero. Now suppose $|V|\ge 2$.  Since $(\G,\sigma)$ is a beBMG,
  $\Rbin\coloneqq\Rbin(\G,\sigma)$ is consistent, and thus, the set of
  connected components $\mathscr{V}_{\Aho}$ of the Aho graph $[\Rbin, V]$
  has a cardinality of at least two. If $|\mathscr{V}_{\Aho}|=2$,
  $\mathscr{V}\coloneqq \mathscr{V}_{\Aho}$ is a bipartition satisfying
  $c(\G,\mathscr{V})=0$ by Cor.~\ref{cor:cost-0-beBMG}.  If
  $|\mathscr{V}_{\Aho}|>2$, we can find an arbitrary bipartition
  $\mathscr{V}$ that is a coarsement of $\mathscr{V}_{\Aho}$.  By
  Lemma~\ref{lem:cost-0-beBMG-coarsements}, $\mathscr{V}$ also satisfies
  $c(\G,\mathscr{V})=0$ in this case.  Hence, for both rules~(1) and~(2),
  we choose a bipartition $\mathscr{V}$ with (minimal) \ur-cost
  $c(\G,\mathscr{V})=0$.  Now, Lemma~\ref{lem:cost-0-subgraphs-BMG} implies
  that the induced subgraph $(\G[V'],\sigma_{|V'})$ is a BMG for every
  $V'\in\mathscr{V}$.  To see that $(\G[V'],\sigma_{|V'})$ is also
  binary-explainable, first note that
  $\Rbin(\G[V'],\sigma_{|V'})=\Rbin_{|V'}$ by Obs.~\ref{obs:R-restriction}.
  This together with the fact that $\Rbin_{|V'}\subseteq\Rbin$ and
  Obs.~\ref{obs:R-F-subsets} implies that $\Rbin(\G[V'],\sigma_{|V'})$ is
  consistent.  Moreover, Prop.~\ref{prop:BMG-charac} and
  $(\G[V'],\sigma_{|V'})$ being a BMG together imply that
  $(\G[V'],\sigma_{|V'})$ is sf-colored.  Hence, we can apply
  Prop.~\ref{prop:Rbin-MAIN} to conclude that $(\G[V'],\sigma_{|V'})$ is a
  beBMG.
  
  Since we recurse on the subgraphs $(\G[V'],\sigma_{|V'})$, which are
  again beBMGs, we can repeat the arguments above along the recursion
  hierarchy to conclude that the \ur-cost $c(\G^*[V'],\mathscr{V}')$
  vanishes in every recursion step.  By Cor.~\ref{cor:sum-c}, the total
  edit cost of Alg.~\ref{alg:general-local-optimal} is the sum of the
  \ur-costs $c(\G^*[V'],\mathscr{V}')$ in each recursion step, and thus,
  also zero.  Therefore, we conclude that we still have
  $(\G^*,\sigma)=(\G,\sigma)$ in Line~\ref{line:return-edited}.  \qed
\end{proof}

\section{Minimizing the \ur-cost $c(\G,\mathscr{V})$}
\label{sect:UR-NP}

The problem of minimizing $c(\G,\mathscr{V})$ for a given properly
colored graph $(\G,\sigma)$ corresponds to the following decision
problem.

\begin{problem}[\PROBLEM{(Bi)Partition with \ur-Cost ((B)PURC)}]\ \\
  \begin{tabular}{ll}
    \emph{Input:}    & A properly $\ell$-colored digraph $(\G =(V,E),\sigma)$
    and an integer $k\geq 0$.\\
    \emph{Question:} & Is there a (bi)partition $\mathscr{V}$ of $V$ 
    such that $c(\G,\mathscr{V})\leq k$?
  \end{tabular}
\end{problem}
In the Appendix, we show that \PROBLEM{(B)PURC} is NP-hard by reduction 
from \PROBLEM{Set Splitting}, one of \citeauthor{Garey:79}'s
\citeyearpar{Garey:79} classical NP-complete problems.

\begin{theorem}
  \label{thm:BPURC-NPc}
  \PROBLEM{BPURC} is NP-complete.
\end{theorem}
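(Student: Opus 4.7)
The plan is to prove the two halves of NP-completeness separately. Membership in NP is immediate from Cor.~\ref{cor:U-polytime}: a bipartition $\mathscr{V}=\{V_1,V_2\}$ of $V(\G)$ is a polynomial-size certificate, and we can compute $c(\G,\mathscr{V})=|U(\G,\mathscr{V})|$ in quadratic time and compare it to $k$.

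For NP-hardness I would reduce from \PROBLEM{Set Splitting}: given a finite ground set $S$ and a collection $\mathcal{C}$ of subsets of $S$, decide whether there is a bipartition $\{S_1,S_2\}$ of $S$ such that no $C\in\mathcal{C}$ is contained entirely in $S_1$ or entirely in $S_2$. From an instance $(S,\mathcal{C})$ I would construct a properly vertex-colored digraph $(\G,\sigma)$ containing a ``selector'' vertex $v_s$ for each $s\in S$ together with auxiliary ``gadget'' vertices attached to each $C\in\mathcal{C}$. The arc set and coloring are chosen so that every bipartition $\mathscr{V}=\{V_1,V_2\}$ of $V(\G)$ induces a bipartition $\{S_1,S_2\}$ of $S$ via $S_j=\{s : v_s\in V_j\}$, and conversely every bipartition of $S$ extends to a placement of gadget vertices that minimizes the cost. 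The goal is to arrange that
\begin{equation*}
c(\G,\mathscr{V}) \;=\; B \;+\; \sum_{C\in\mathcal{C}} \phi(C,\mathscr{V}),
\end{equation*}
where $B$ is a baseline independent of $\mathscr{V}$ (after an optimal placement of the gadget vertices) and $\phi(C,\mathscr{V})$ is strictly positive if and only if $C$ becomes monochromatic under the induced bipartition of $S$. Setting $k=B$ then converts ``$c(\G,\mathscr{V})\le k$'' into ``every $C\in\mathcal{C}$ is split,'' yielding the reduction.

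Correctness is established in the two standard directions. Given a valid set splitting of $S$, place the selector and gadget vertices accordingly to attain cost exactly $B\le k$. Conversely, given any bipartition $\mathscr{V}$ with $c(\G,\mathscr{V})\le B$, a short swap argument first repositions the gadget vertices into their designated slots without increasing the cost, after which the positions of the $v_s$ read off a valid bipartition of $S$. Both directions rely on the explicit decomposition $U(\G,\mathscr{V})=U_1\cupdot U_2\cupdot U_3$ from Lemma~\ref{lem:unsat-rel-charac} to compute the contribution of each gadget to the total cost locally.

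The main obstacle is the design of the gadget itself, because the three terms in Lemma~\ref{lem:unsat-rel-charac} pull in different directions: $U_1$ penalizes cross-arcs whose target color is duplicated on the source side, $U_2$ penalizes missing cross-arcs when the target color lives only on the target side, and $U_3$ penalizes within-part non-arcs to locally unique vertices. A careful mix of unique and replicated colors, together with a judiciously chosen arc pattern connecting selectors to gadget vertices, is required so that the three $U_i$-contributions cancel cleanly everywhere except for the intended ``is $C$ monochromatic?'' indicator; the restriction to bipartitions rather than arbitrary partitions simplifies this bookkeeping but is also what must be exploited, so that a single reduction can only handle the \PROBLEM{BPURC} variant stated in the theorem.
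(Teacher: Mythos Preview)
Your NP-membership argument and the choice of \PROBLEM{Set Splitting} as the source problem both match the paper. However, the proposal as written is an outline rather than a proof: you explicitly flag ``the design of the gadget itself'' as the main obstacle and do not specify the arc pattern, the coloring, or why the three $U_i$-contributions cancel to leave exactly the monochromaticity indicator. Without a concrete gadget and a verification via Lemma~\ref{lem:unsat-rel-charac}, the crucial step is missing.

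It is also worth noting that the paper's reduction is architecturally different from what you sketch, and cleaner. Rather than one selector vertex $v_s$ per $s\in S$ plus gadget vertices attached to each $C\in\mathcal{C}$, the paper builds one \emph{gadget per element} $s$: for every $C\ni s$ it adds four fresh vertices, two of color $(C,1)$ and two of color $(C,2)$, and makes each $s$-gadget internally a complete properly-colored digraph with no arcs between different gadgets. The color set is $\mathcal{C}\times\{1,2\}$. This design achieves threshold $k=0$, not a baseline $B>0$: a bipartition has zero \ur-cost iff (i) no gadget is split (forced by $U_1$, since any cross-gadget arc hits a color duplicated on its own side) and (ii) every color appears on both sides (forced by $U_2$ and $U_3$), which happens iff every $C$ meets both parts of the induced bipartition of $S$. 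There is no swap argument and no optimal placement of auxiliary vertices to worry about; the $U_i$ do not need to ``cancel'' anywhere. If you continue with your selector-plus-set-gadget architecture you will likely have to tolerate a nonzero baseline and carry out the exchange argument you describe, which is doable but noticeably messier than the paper's $k=0$ construction.
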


Thm.~\ref{thm:algo-consistent} suggests to consider heuristics
for \PROBLEM{(B)PURC} that make use of the Aho graph in the following
manner:
\begin{enumerate}
  \item Construct the Aho graph $H\coloneqq [\mathscr{R}(\G,\sigma),V]$
  based on the set of informative triples $\mathscr{R}(\G,\sigma)$.
  \item If $H$ has more than one connected component, we use the set of
  connected components as the partition $\mathscr{V}$.
  \item If $H$ is connected, a heuristic that operates on the Aho graph $H$
  is used to find a partition $\mathscr{V}$ with small \ur-cost
  $c(\G,\mathscr{V})$.
\end{enumerate}
Plugging any algorithm of this type into Line~\ref{line:min-cost} of
Alg.~\ref{alg:general-local-optimal} reduces
the algorithm to \texttt{BUILD} if a BMG is used as
input and thus guarantees consistency (cf.\ Prop.~\ref{prop:BMG-charac}).
We note, however, that the connected components of a disconnected Aho graph
are not guaranteed to correspond to an optimal solution for
\PROBLEM{(B)PURC} in the general case.

\section{Computational experiments}
\label{sec:heur-computational-exp}

In this section, we compare different heuristics for the \PROBLEM{(B)PURC}
Problem and their performance in the context of BMG editing. Somewhat
unexpectedly, but in accordance with
Fig.~\ref{fig:global-vs-local-optimum}, our results suggest that a good (or
bad) performance of \PROBLEM{(B)PURC} is not directly linked to a good (or
bad) performance for BMG editing. Moreover, we find that, even for noisy
data, all analyzed methods are able to capture the tree structure of the
underlying ``true'' BMG at least to some extent. As we shall see, community
detection approaches in combination with the \ur-cost appear to be more
promising for BMG editing than optimal solutions of \PROBLEM{(B)PURC} alone.

\subsection{Heuristics for \PROBLEM{(B)PURC}}\label{subsec:exp_1}

\PROBLEM{(B)PURC} is a variation on graph partitioning problems. It seems
reasonable, therefore, to adapt graph partitioning algorithms for our
purposes. 

\paragraph{MinCut.}
We solve the minimum edge cut problem for the connected undirected graph
$H$, i.e., we want to find a bipartition $\mathscr{V}=\{V_1,V_2\}$ such
that the number of edges between $V_1$ and $V_2$ is minimal in $H$.  The
problem can be solved exactly in polynomial time using the Stoer-Wagner
algorithm \cite{Stoer:97}. Note, however, that the minimum edge cut in $H$
will in general not deliver an optimal solution of \PROBLEM{(B)PURC}.

\paragraph{Karger's algorithm} is a randomized algorithm that, in its
original form, also aims to find a minimum edge cut \cite{Karger:93}.  In
brief, it merges vertices of the graph by randomly choosing and contracting
edges, until only two vertices remain, which induce a bipartition
$\mathscr{V}$ according to the vertices that were merged into them.  By
repeating this process a sufficient number of times, a minimum edge cut can
be found with high probability. Here, we use the \ur-cost
$c(\G,\mathscr{V})$ instead of the size of the edge cut as objective
function to select the best solution over multiple runs.

\paragraph{A simple greedy approach} starts with
$\mathscr{V}=\{V_1=\emptyset, V_2=V'\}$ and stepwise moves a vertex
$v\in V_2$ to $V_1$ such that $c(\G,\{V_1\cup\{v\},V_2\setminus\{v\}\})$ is
optimized. Ties are broken at random. This produces $|V|-1$ ``locally
optimal'' bipartitions, from which the best one is selected.

\paragraph{Gradient walks.}
Here we interpret the space of all bipartitions $\mathscr{V}$ endowed with
the objective function $c(\G,\mathscr{V})$ as a fitness landscape.  We
start with a random but balanced bipartition $\mathscr{V}=\{V_1, V_2\}$.
As move set we allow moving one vertex from $V_1$ to $V_2$ or \textit{vice
  versa}.  In each step, we execute the move that best improves the
objective function, and stop when we reach a local optimum. 

\paragraph{Louvain method.} This method for community detection in graphs
greedily optimizes the so-called modularity of a vertex partition
$\mathscr{V}$ \cite{Blondel:08}. Its objective function is
$q(\mathscr{V}) = \sum_{W\in\mathscr{V}}\sum_{u,v\in W}
(a_{uv}-d_ud_v/(2m))$, where $a_{uv}$ are the entries of the (possibly
weighted) adjacency matrix of a graph $H$, $d_u=\sum_v a_{uv}$ the vertex 
degrees,
and $m$ is the sum of all edge weights in the graph. This favors so-called
\emph{communities} or \emph{modules} $W$ that are highly connected
internally but have only few edges between them.  The Louvain method
operates in two phases starting from the discrete partition
$\mathscr{V}=\{\{u\}\,|\, u\in V\}$. In the first phase, it repeatedly
iterates over all vertices $x$ and moves $x$ into the community of one of
its neighbors that leads to the highest gain in modularity as long as a
move that increases $q(\mathscr{V})$ can be found. The second phase repeats
the first one on the weighted quotient graph $H/\mathscr{V}$ whose vertices
are the sets of $\mathscr{V}$ and whose edge weights are the sum of the
original weights between the communities.  In addition to maximizing the
modularity, we also investigate a variant of the Louvain method that moves
vertices into the community of one of their neighbors if this results in a
lower \ur-cost $c(\G,\mathscr{V})$, and otherwise proceeds analogously. We
exclude the merging of the last two vertices to ensure that a non-trivial
partition is returned. Since the Louvain method is sensitive to the order
in which the vertex set is traversed, we randomly permute the order of
vertices to allow multiple runs on the same input.

\smallskip With the exception of the Stoer-Wagner algorithm for solving the
minimum edge cut problem, all of these partitioning methods include random
decisions.  One may therefore run them multiple times and use the partition
corresponding to the best objective value, i.e., the lowest \ur-cost
$c(\G,\mathscr{V})$ or the highest modularity. If not stated otherwise, we
apply five runs for each of these methods in each recursion step (with a
connected Aho graph) in the following analyses.

\subsubsection*{Construction of test instances} 

Since we are interested in the \PROBLEM{(B)PURC} problem in the context of
BMG editing, we test the heuristics on ensembles of perturbed BMGs that were
constructed as follows: We first generate leaf-colored trees $(T,\sigma)$
with a predefined number of vertices $N$ and colors $\ell$ and then compute
their BMGs $\G(T,\sigma)$.  For each tree, we start from a single
vertex. We then repeatedly choose one of the existing vertices $v$
randomly, and, depending on whether $v$ is currently an inner vertex or a 
leaf, attach either a single or two new leaves to it, respectively.
Hence, the number
of leaves increases by exactly one and the tree remains phylogenetic in
each step.  We stop when the desired number $N$ of leaves is reached.  In
the next step, colors are assigned randomly to the leaves under the
constraint that each of the $\ell$ colors appears at least once.  We note
that trees created in this manner are usually not least resolved, and their
BMGs are in general not binary-explainable.  Finally, we disturb these BMGs
by inserting and deleting arcs according to a specified insertion and
deletion probability, respectively.  Since arcs between vertices of the
same color trivially cannot correspond to best matches, we do not insert
arcs between such vertices, i.e., the input graphs for the editing are all
properly vertex-colored digraphs.

For the purpose of benchmarking the heuristics for the \PROBLEM{(B)PURC}
problem, we only retain perturbed BMGs $(\G,\sigma)$ with a connected Aho
graph $H\coloneqq[\mathscr{R}(\G,\sigma), V(\G)]$ because the heuristics
are not applied to instances with a disconnected Aho graph~$H$.  Depending on
the insertion and deletion probabilities, we retained 93\% to
100\% of the initial sample, except in the case where arcs were only
inserted to obtain a disturbed graph. Here, the Aho graph $H$ was connected
in 60\% of the initial sample. Thus, even moderate perturbation of a BMG
introduces inconsistencies into the triple set $\mathscr{R}(\G,\sigma)$ and
results in a connected Aho graph $H$ in the majority of cases. As shown in
Fig.~\ref{fig:introduce-inconsistency}, both arc insertions and deletions
can cause triple inconsistencies.

\begin{figure}[!t]
  \centering
  \includegraphics[width=0.85\textwidth]{./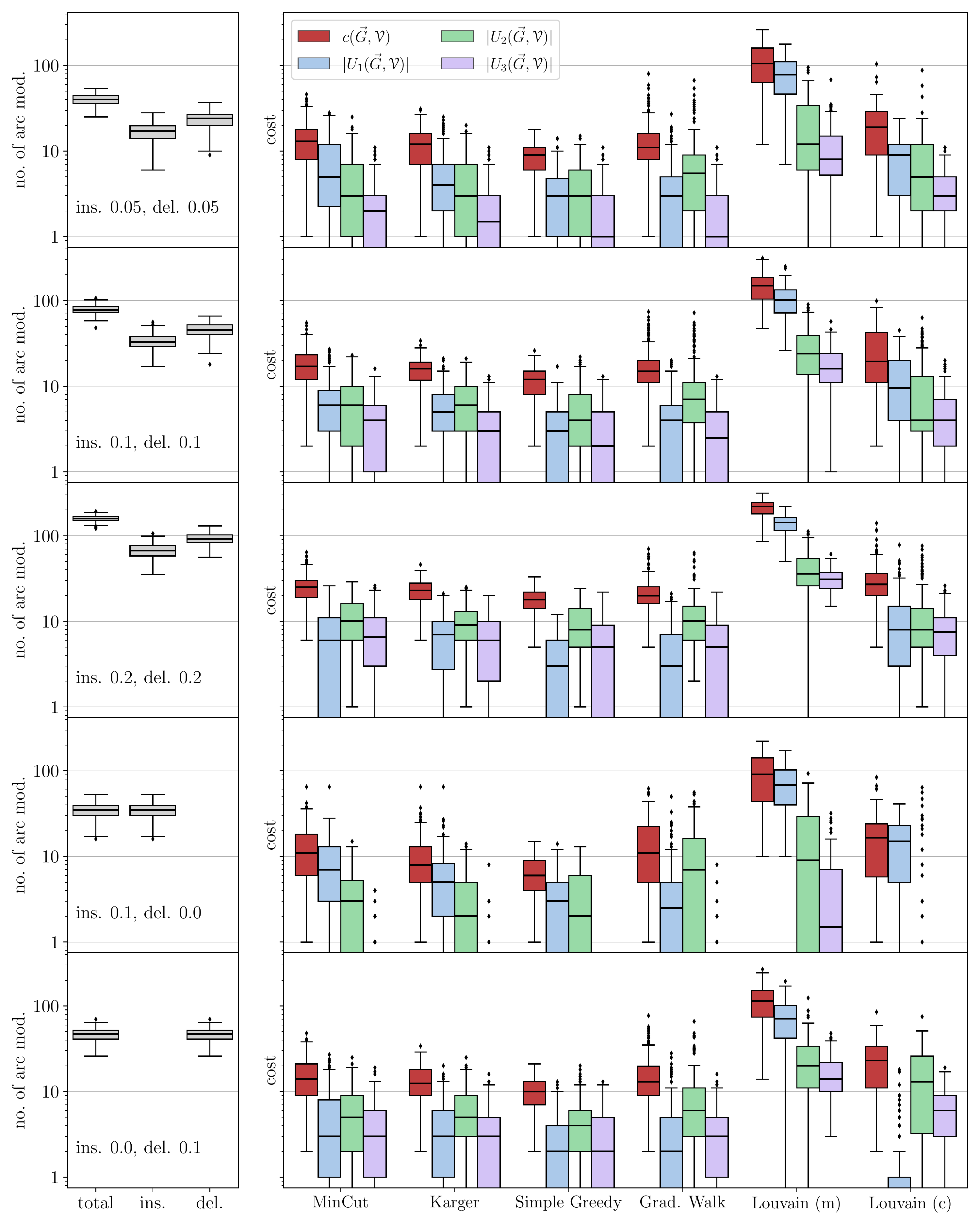}
  \caption{Performance of partitioning methods for minimizing
    $c(\G,\mathscr{V})$ on perturbed BMGs $(\G,\sigma)$.  The rows
    correspond to different insertion and deletion probabilities (indicated
    in the l.h.s.\ panels) used to disturb the original BMGs.  The l.h.s.\
    panels show the distribution of the no.\ of arc modifications in total,
    arc insertions and arc deletions of the disturbed graphs w.r.t.\ the
    original BMGs.  The r.h.s.\ panels show the distribution of \ur-costs
    $c(\G,\mathscr{V})$ (red) obtained for each method, and of the no.\ of
    arcs in $U_1(\G,\mathscr{V})$, $U_2(\G,\mathscr{V})$, and
    $U_3(\G,\mathscr{V})$ (i.e., the sets that contribute to the \ur-cost).
    Example plot for $|V|=30$ vertices and $|\sigma(V)|=10$ colors in each
    graph.  Among the 200 generated graphs, only those with a connected Aho
    graph $[\mathscr{R}(\G,\sigma), V(\G)]$ are included in each of the
    five rows (93\%, 100\%, 100\%, 60\%, 95\%).}
  \label{fig:part-quality}
\end{figure}

\subsubsection*{Benchmarking Results} 

Fig.~\ref{fig:part-quality} suggests that the \emph{Simple Greedy} approach
is best suitable for the minimization of the \ur-cost $c(\G,\mathscr{V})$
for any of the considered parameters for BMG disturbance.  The Louvain
method based on graph modularity (\emph{Louvain (m)}) appears to have by
far the worst performance which, moreover, quickly produces higher
\ur-costs with an increasing intensity of the perturbations.

In order to better understand the behavior of the repeated application
of the partitioning heuristics of Alg.~\ref{alg:general-local-optimal}, it
is instructive to consider not only the score but also the structure of
partitions. We observe a strong tendency of some of the partitioning
methods to produce \emph{single-leaf splits}, i.e.,
(bi)partitions $\mathscr{V}$ in which at least one set $W\in\mathscr{V}$
is a singleton (i.e., $|W|=1$). Single-leaf splits in general seem to have
relatively low \ur-costs. Further details on the propensity of the
partitioning heuristics to produce single-leaf splits are given in
Appendix~\ref{sect:app-single}.

\subsection{Heuristics for BMG Editing}

In this section, we explore the performance of several variants of
Alg.~\ref{alg:simple} and~\ref{alg:general-local-optimal} for BMG
editing. The variants of Alg.~\ref{alg:general-local-optimal} correspond to
using the heuristics for \PROBLEM{(B)PURC} discussed above for processing
a connected Aho graph
$H\coloneqq [\mathscr{R}(\G^*[V'],\sigma_{|V'}),V']$ for the informative
triples $\mathscr{R}(\G^*[V'],\sigma_{|V'})$ in each step of the recursion.
We note that Alg.~\ref{alg:general-local-optimal} in combination with any
of the heuristics for \PROBLEM{(B)PURC} also serves as a heuristic for
\PROBLEM{MaxRTC} because the choice of the partition $\mathscr{V}$ in each
recursion step determines a set of 
included triples $xy|z$, namely those
for which $x$ and $y$ are contained in one set of $\mathscr{V}$ while $z$
is contained in another. Another way of expressing that same fact is that
an approximation to \PROBLEM{MaxRTC} is given by the subset
$\mathscr{R}^*\subseteq \mathscr{R}(\G,\sigma)$ of the informative triples
of the input graph $(\G,\sigma)$ that are displayed by the tree $T$
constructed in Alg.~\ref{alg:general-local-optimal}. In particular,
Alg.~\ref{alg:general-local-optimal} together with the \emph{MinCut}
method has been described as a heuristic for \PROBLEM{MaxRTC} in earlier
work \cite{Gasieniec:99,Byrka:10}. For comparison, we will also consider
the following bottom-up approach as a component of Alg.~\ref{alg:simple}:

\paragraph{Best-Pair-Merge-First (BPMF)} was described by \citet{Wu:04},
and constructs a tree from a set of triples $\mathscr{R}$ in a bottom-up
fashion. We use here a modified version introduced by
\citet{Byrka:10}. BPMF operates similar to the well-known UPGMA clustering
algorithm \cite{Sokal:58}. Starting with each vertex $x\in V$ as its own
cluster, pairs of clusters are merged iteratively, thereby defining a
rooted binary tree with leaf set $V$.  The choice of the two clusters to
merge depends on a similarity score with the property that any triple
$xy|z$ with $x$, $y$, and $z$ lying in distinct clusters $S_x$, $S_y$, and
$S_z$ contributes positively to $\textrm{score}(S_x,S_y)$ and negatively to
$\textrm{score}(S_x,S_z)$ and $\textrm{score}(S_y,S_z)$. Since BPMF
constructs the tree $T$ from the bottom, it does not imply a vertex
partitioning scheme that could be plugged into the top-down procedure of
Alg.~\ref{alg:general-local-optimal}.  Importantly, BPMF is not a
consistent heuristic for \PROBLEM{MaxRTC}, i.e.\ it does not necessarily
recognize consistent triples sets. Hence, consistency in the application to
BMG editing is also not guaranteed, see Fig.~\ref{fig:BPMF-not-consistent}
in Appendix~\ref{sect:app-BPMF} for an example.

\smallskip
In summary, we have two distinct ways to obtain an edited BMG: We may
take either 
\begin{enumerate}
  \item $\G(T,\sigma)$, where $T$ is the output tree of
  Alg.~\ref{alg:general-local-optimal} or BPMF, respectively, or
  \item $\G(T^*, \sigma)$, where $T^*=\Aho(\mathscr{R}^*,V(\G))$ is
  constructed from the consistent triple subset of triples
  $\mathscr{R}^*$. This corresponds to Alg.~\ref{alg:simple}.
\end{enumerate}
Somewhat surprisingly, the results in Fig.~\ref{fig:edit_comparison1}
suggest that it is in general beneficial to extract the triple set
$\mathscr{R}^*$ and rerun the \texttt{BUILD} algorithm, i.e., to use
$\G(T^*,\sigma)$.

\subsubsection*{Benchmarking Results}

\begin{figure}[!t]
  \centering
  \includegraphics[width=0.85\textwidth]{./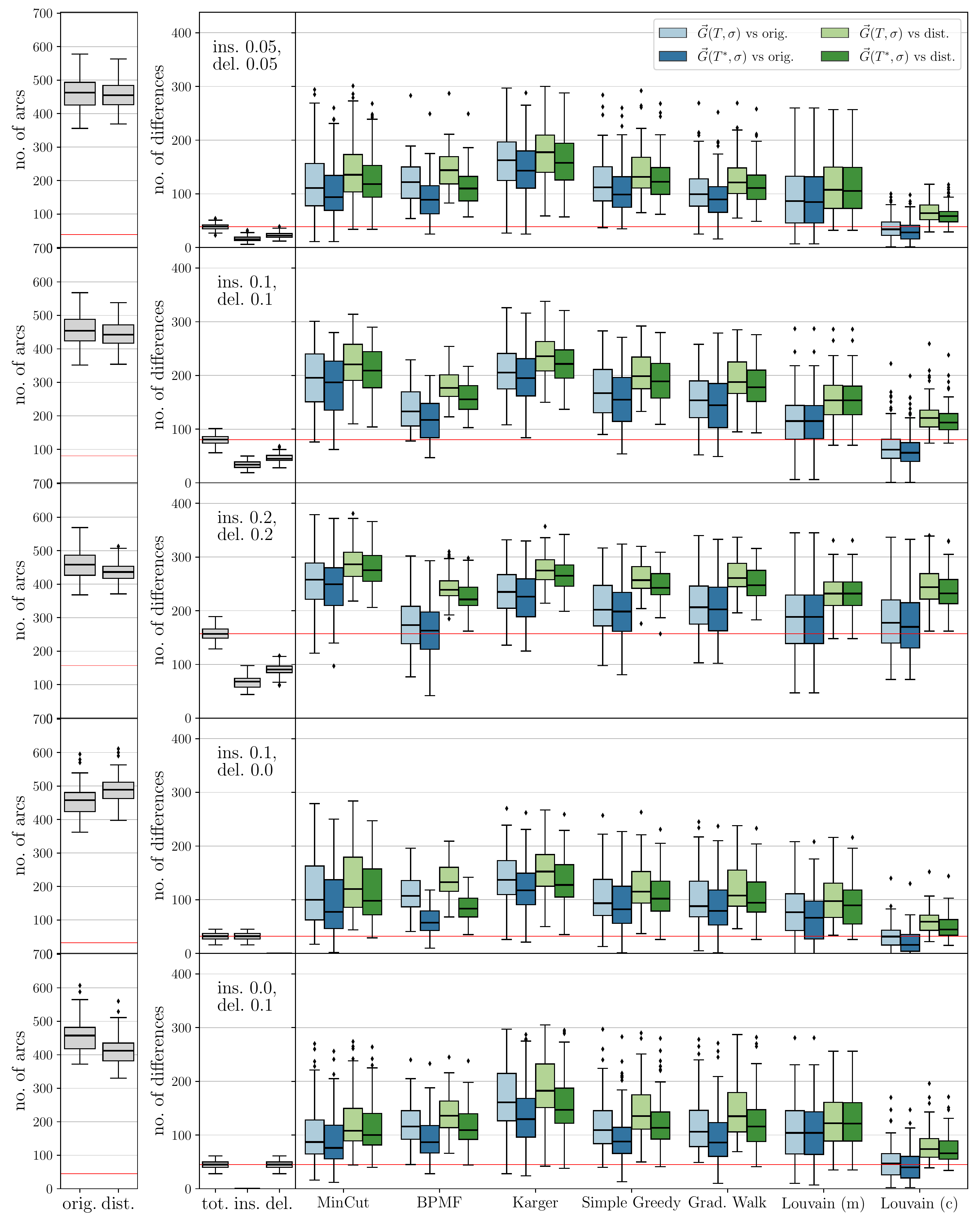}
  \caption{Performance comparison of several BMG editing heuristics based
    on the no.\ of arc differences. The rows correspond to different
    insertion and deletion probabilities (indicated in the second column
    panels) used to perturb the original BMGs.  The l.h.s.\ panels show the 
    distribution of the no.\ of arcs in the original BMG and in the
    perturbed graph.  The second column panels show the distribution of the
    no.\ of arc modifications in total, arc insertions and arc deletions of
    the perturbed graphs w.r.t.\ the original BMGs.  The red lines mark the
    median values of the total no.\ of modifications.  The r.h.s.\ panels
    show the total no.\ of arc differences w.r.t.\ the original random BMGs
    (blue) and the perturbed graphs (green).  The light colors indicate the
    ``direct'' performance of each method, i.e., the graph $\G(T,\sigma)$
    where $T$ is the tree that is directly constructed by each method.  The
    darker colors indicate the results if the methods are used as heuristic
    for \PROBLEM{MaxRTC} in Alg.~\ref{alg:simple}.  Example plot for
    $|V|=30$ vertices and $|\sigma(V)|=10$ colors in each graph, 100 graphs
    per row.}
  \label{fig:edit_comparison1}
\end{figure}

\begin{figure}[!t]
  \centering
  \includegraphics[width=0.85\textwidth]{./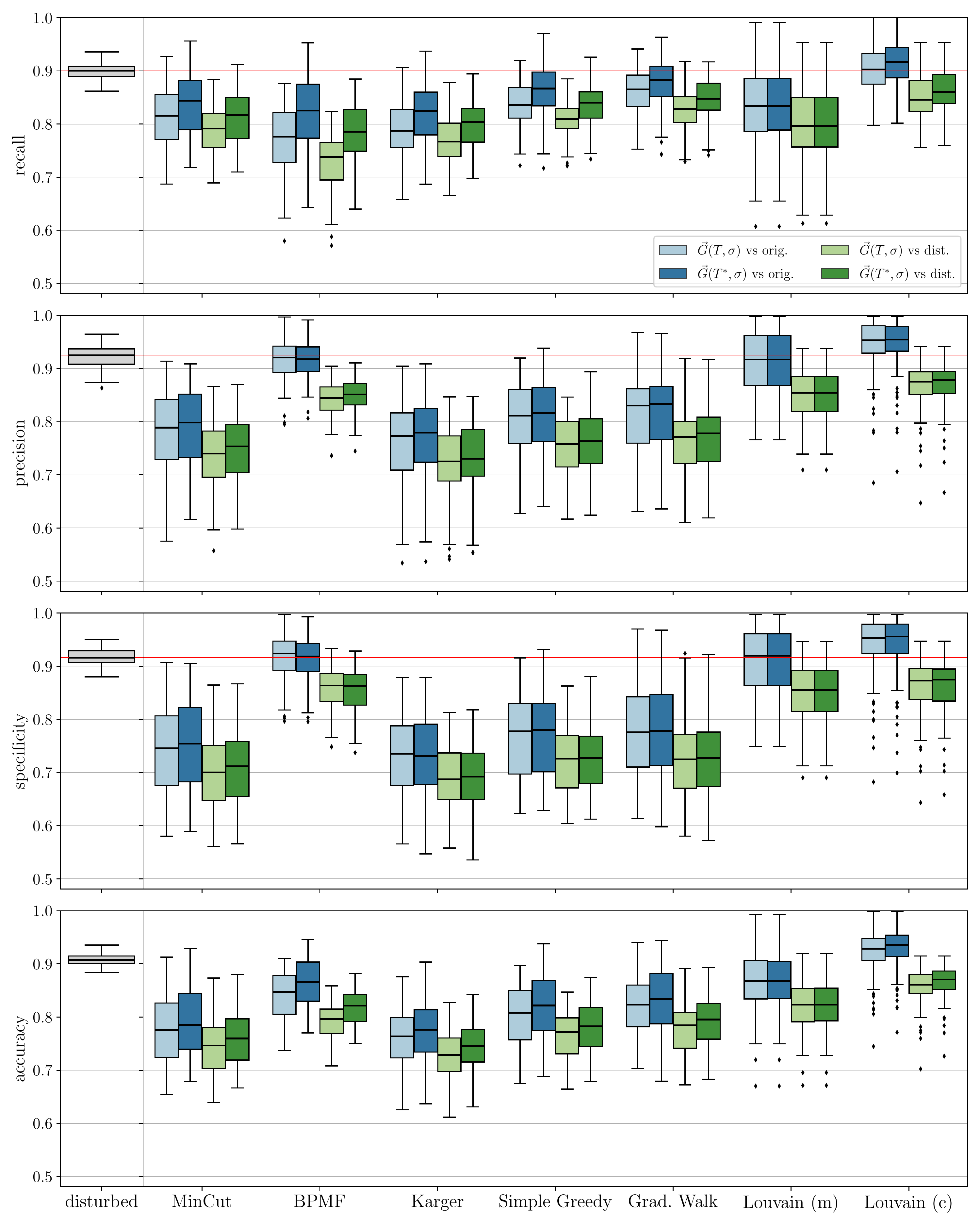}
  \caption{Performance comparison of several BMG editing heuristics based
    on recall, precision, specificity, and accuracy (rows 1 to 4).  The
    l.h.s.\ panels show the respective measure for the perturbed graph
    w.r.t.\ the original random BMG.  The red lines marks the median values
    of the latter.  The r.h.s.\ panels show the results for the edited
    graphs w.r.t.\ the original BMGs (blue) and the perturbed graphs
    (green).  The light colors indicate the ``direct'' performance of each
    method, i.e., the graph $\G(T,\sigma)$ where $T$ is the tree that is
    directly constructed by each method.  The darker colors indicate the
    results if the methods are used as heuristic for \PROBLEM{MaxRTC} in
    Alg.~\ref{alg:simple}.  Example plot for $|V|=30$ vertices and
    $|\sigma(V)|=10$ colors in each graph, insertion and deletion
    probability 0.1, and 100 graphs.}
  \label{fig:edit_comparison2}
\end{figure}

To assess the performance of the various heuristics, we consider the
differences between the editing result $(G^*,\sigma)$ from both the
original BMG $(G_{orig},\sigma)$ and the perturbed input graphs
$(G,\sigma)$. In Fig.~\ref{fig:edit_comparison1}, we summarize the absolute
values of the symmetric differences of the arc sets
$d_{orig} \coloneqq |E(G^*)\symdiff E(G_{orig})|$ and
$d \coloneqq |E(G^*)\symdiff E(G)|$, respectively.  These results are
translated to usual normalized performance indicators (recall, precision,
specificity, and accuracy; all defined in terms of the arc sets) in
Fig.~\ref{fig:edit_comparison2}.

Comparing the distances $d_{orig}$ (blue boxplots) and $d$ (green 
boxplots) of the editing result
$(G^*,\sigma)$ to original unperturbed BMG and the input graph, resp., we
find that, for the methods investigated here, on average $d_{orig}$ is
smaller than $d$. This indicates that all methods are able to capture the
underlying tree structure of the original BMG at least to some extent.  The
discrepancy between $d_{orig}$ and $d$ tends to increase with the level of
perturbation, a trend that is most pronounced for \emph{Louvain (c)}.  This
result is encouraging for practical applications of BMG modification to
correcting noisy best match data, where the eventual goal is to obtain a
good estimate of the underlying true BMG.

Intriguingly, the extraction of consistent informative triples
$\mathscr{R}^*$ from the reconstructed tree $T$ and rerunning
\texttt{BUILD}, i.e., using $\G(T^*,\sigma)$, in general improves the
estimation results for the majority of methods.  In particular, this
increases the recall without a notable negative impact on precision and
specificity (cf.\ Fig.~\ref{fig:edit_comparison2}).  A better recall,
corresponding to a higher proportion of correctly inferred arcs, is not
surprising in this context, since this additional step in essence reduces
the number of triples.  We therefore expect the tree
$T^*=\Aho(\mathscr{R}^*,V(\G))$ to be on average less resolved than $T$.
The BMGs of less resolved trees tend to have more arcs than BMGs of highly
resolved tree (cf.\ \cite[Lemma~8]{Schaller:20x}).  In good accordance with
this prediction, \emph{BPMF}, which shows a strong increase of recall,
always constructs a binary, i.e., fully-resolved, tree $T$ -- whereas the
corresponding tree $T^*$ in general is much less resolved.

Somewhat surprisingly, a good or bad performance for minimizing the
\ur-cost in individual steps apparently does not directly translate to the
performance in the overall editing procedure.  In particular, the
modularity-based \emph{Louvain (m)} method seems to be a better choice than
the \emph{Simple Greedy} approach.  The methods \emph{MinCut} and
\emph{Karger} do not seem to be suitable components for
Alg.~\ref{alg:general-local-optimal}, with the exception of the case where
perturbations are arc deletions only (Fig.~\ref{fig:edit_comparison1},
bottom row). Here, \emph{MinCut} produces reasonable estimates that compare
well with other methods.  The bottom-up method for the \PROBLEM{MaxRTC}
problem \emph{BPMF} also produces relatively good results. It appears to be
robust at high levels of perturbation.  For most of the parameter
combinations, we obtain the best results with the \ur-cost-based Louvain
method (\emph{Louvain~(c)}).  Here, we often observe a symmetric difference
(w.r.t.\ the arcs sets) that is better than the difference between the
original and the perturbed graph.  This trend is illustrated by the red
median lines in Fig.~\ref{fig:edit_comparison1}
and~\ref{fig:edit_comparison2}.  Hence, we achieve two goals of BMG
editing: (i) the resulting graph $(\G^*,\sigma)$ is a BMG, i.e., it
satisfies Def.~\ref{def:BMG}, and (ii) it is closer to the original BMG
than the perturbed graph.  We note that we observed similar trends across
all investigated combinations for the numbers of leaves $N$ (ranging from
$10$ to $40$) and of colors $\ell$ ($\ell<N$ ranging from $2$ to at most
$20$).

Our results show that minimization of the \ur-cost in each step is not the
best approach to BMG editing because this often produces very unbalanced
partitions. As a consequence, more recursion steps are needed in
Alg.~\ref{alg:general-local-optimal} resulting in higher accumulated number
of arc edits.  Fig.~\ref{fig:louvain_better} shows that better solutions to
the BMG editing problem are not necessarily composed of vertex partitions
with minimal \ur-cost in each step. The perturbed graph $(\G,\sigma)$ in
Fig.~\ref{fig:louvain_better} was obtained from the randomly simulated BMG
$(\G_\textrm{orig},\sigma)$ as described above using equal insertion and
deletion probabilities of $0.1$.  As an example, the partitions
$\mathscr{V}_1$ and $\mathscr{V}_2$ as constructed by the \emph{MinCut} and
the \emph{Louvain~(c)} method in the first iteration step of
Alg.~\ref{alg:general-local-optimal} are shown as pink and green frames,
respectively.  \emph{MinCut} produces a single-leaf split $\mathscr{V}_1$
with an isolated vertex $b_2$ and \ur-cost $c(\G,\mathscr{V}_1)=1$
deriving from $U_1(\G,\mathscr{V}_1)=\{(b_2,a_2)\}$. \emph{Louvain~(c)}
identifies the partition $\mathscr{V}_2$ with $c(\G,\mathscr{V}_2)=3$
originating from
$U_2(\G,\mathscr{V}_1)=\{(b_3,a_1),(c_2,a_1), (c_2,b_1)\}$, which
corresponds to the connected components of the Aho graph $H_\textrm{orig}$
of the unperturbed BMG and thus identifies the split in the original tree
$(T,\sigma)$. Here, the correct partition $\mathscr{V}_2$ has a strictly
larger \ur-cost than the misleading choice of $\mathscr{V}_1$. However,
\emph{MinCut} results in a higher total edit cost than \emph{Louvain~(c)}
for $(\G,\sigma)$.

\begin{figure}[t]
  \centering
  \includegraphics[width=0.85\textwidth]{./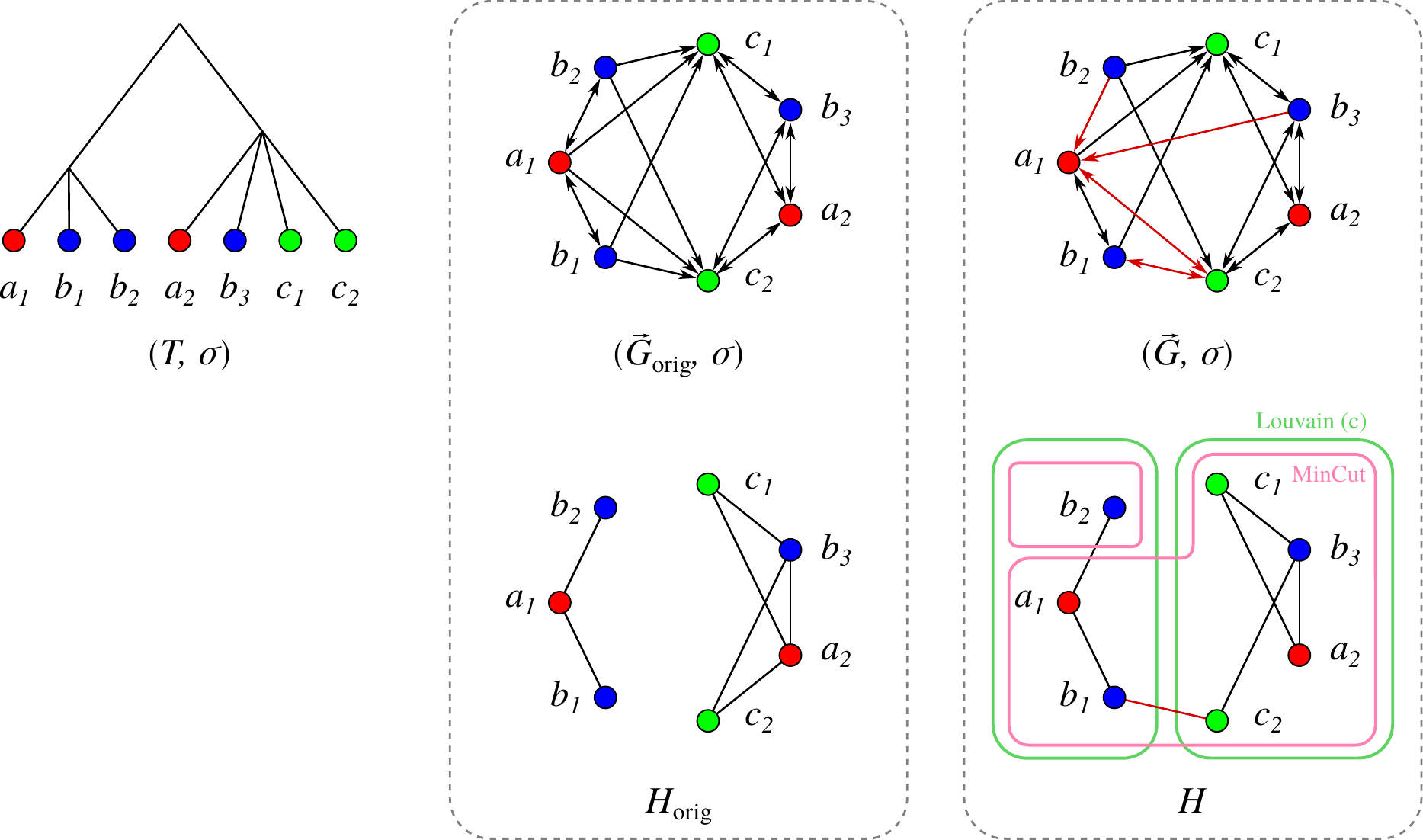}
  \caption{Example of an instance where the Louvain method performs
    better due to more balanced partitions.  The (least resolved) tree
    $(T,\sigma)$ explains the BMG $(\G_\textrm{orig},\sigma)$ with vertex
    set $V$.  The graph
    $H_\textrm{orig}=[\mathscr{R}(\G_\textrm{orig},\sigma), V]$ is the Aho
    graph corresponding to the informative triple set
    $\mathscr{R}(\G_\textrm{orig},\sigma)$.  The perturbed graph
    $(\G,\sigma)$ is obtained from $(\G_\textrm{orig},\sigma)$ by inserting
    the arcs $(b_3,a_1)$, $(c_2,a_1)$, and $(c_2, b_1)$ and deletion of
    $(a_1,b_2)$. The corresponding Aho graph
    $H=[\mathscr{R}(\G,\sigma), V]$ is connected because the perturbation
    introduced the additional informative triple $c_2b_1|b_2$.  The green
    and pink frames correspond to the partitions $\mathscr{V}_1$ and
    $\mathscr{V}_2$ of $V$ constructed by the methods \emph{Louvain~(c)} and
    \emph{MinCut}, respectively.}
  \label{fig:louvain_better}
\end{figure}

In order to account for the issue of unbalanced partitions, we performed a
cursory analysis on maximizing a gain function rather than minimizing the
\ur-cost. In analogy to $c(\G,\mathscr{V})$, we defined $g(\G,\mathscr{V})$
as the number of arcs and non-arcs that are satisfied by the BMGs of
\emph{all} trees in $\mathscr{T}(\mathscr{V})$.  Recapitulating the
arguments in the proof of Lemma~\ref{lem:unsat-rel-charac}, one can show
that these relations can also be determined as the union of three sets by
replacing ``$(x,y)\in E$'' with ``$(x,y)\notin E$'' and \textit{vice versa}
in the definitions of $U_1(\G,\mathscr{V})$, $U_2(\G,\mathscr{V})$, and
$U_3(\G,\mathscr{V})$.  The gain function $g(\G,\mathscr{V})$ can be used
instead of the \ur-cost with \emph{Karger}, \emph{Simple Greedy},
\emph{Gradient Walk}, and in a gain-function-based \emph{Louvain}
method. For all these algorithms, however, maximizing $g(\G,\mathscr{V})$
leads to partitions that appear to be \emph{too} balanced, and a
performance for BMG editing that is worse than the use of the \ur-cost.  A
possible explanation for both unbalanced and too balanced partitions as
produced with a cost and gain function, resp., is the fact that
$U_1(\G,\mathscr{V})$ and $U_2(\G,\mathscr{V})$ (and their gain function
counterparts) contain pairs of vertices $(x,y)$ that lie in distinct sets
of $\mathscr{V}$. Hence, both single-leaf splits and perfectly balanced
partitions minimize (maximize, resp.)  the number of pairs that could
potentially be contained in these arc sets.

\smallskip

\begin{figure}[t]
  \centering
  \includegraphics[width=0.85\textwidth]{./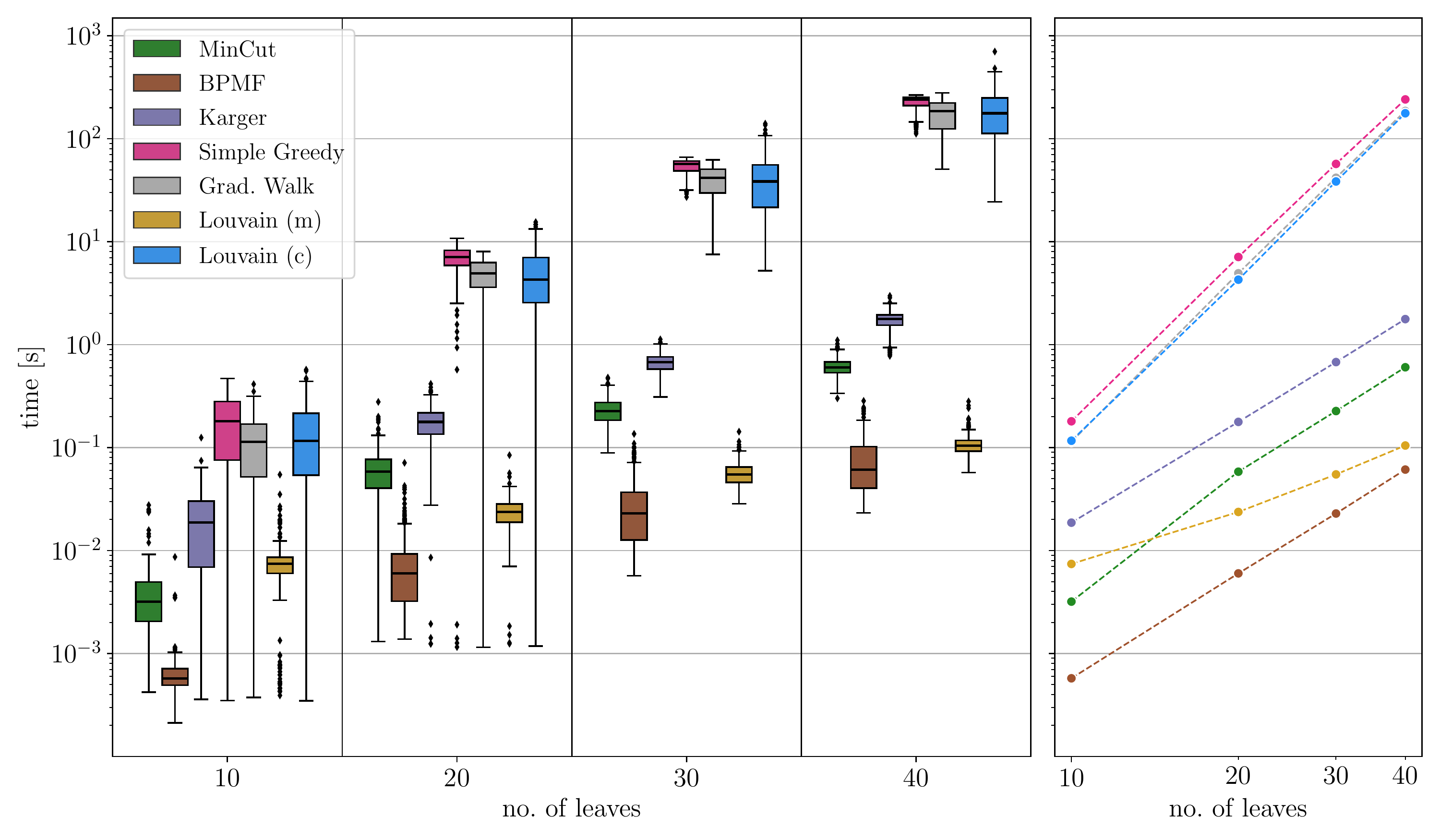}
  \caption{Running times of the different methods for BMG editing.  The
    time only includes the construction of the tree $T$, i.e.,
    Alg.~\ref{alg:general-local-optimal} or \emph{BPMF}, resp., but not the
    extraction of the triple set $\mathscr{R}^*$ followed by rerunning
    \texttt{BUILD}.  For each number of leaves $N\in\{10,20,30,40\}$ and
    each number of colors $\ell$ (taken from $\{2,5,10,20\}$ such that
    $\ell<N$), 100 perturbed BMGs were generated using equal insertion and
    deletion probabilities of $0.1$.  In the right panel, the median values
    are shown with logarithmic axes.}
  \label{fig:runtime}
\end{figure}

All methods for BMG editing were implemented and compared using Python on
an off-the-shelf laptop. Fig.~\ref{fig:runtime} summarizes the running
times. The right panel shows that all methods appear to scale polynomially
in the size $|V|$ of the vertex set of the input graph.  The methods that
explicitly rely on the \ur-cost are much slower than the other methods.  We
suspect that this is largely due to the repeated $O(|V'|^2)$-computation of
$c(\G,\mathscr{V})$ whenever a vertex is moved between the sets/communities
in $\mathscr{V}$. This could possibly be improved by an incremental
algorithm.

\subsection{Heuristics for binary-explainable BMG Editing}
\label{sect:beBMG-editing-results}

In order to test the heuristics for the slightly different task of
obtaining a binary-explainable BMG $(\G^*,\sigma)$, we constructed a
similar set of test instances. The only difference is that we ensured that
$T_\textrm{orig}$ is binary by modifying the attachment procedure (cf.\
Section \ref{subsec:exp_1}) such that in each growth step we only choose
among the vertices that are currently leaves for attaching two new
leaves. Thus, $(\G_\textrm{orig},\sigma)=\G(T_\textrm{orig},\sigma)$ is
binary-explainable.  The editing heuristics are analogous, with two
straightforward modifications:
\begin{itemize}
  \item In the Aho graphs, $\Rbin(\G,\sigma)$ is used instead of
  $\mathscr{R}(\G,\sigma)$.
  \item If we encounter a partition $\mathscr{V}$ of cardinality greater than
  two in some recursion step, we use a coarse-graining $\mathscr{V}'$ of
  $\mathscr{V}$ such that $|\mathscr{V}'|=2$ instead. This modification is
  necessary whenever $[\Rbin(\G,\sigma)[V'], V']$ itself has more than two
  connected components, and for the partitions with $|\mathscr{V}|\ge 3$
  returned by the Louvain method.
\end{itemize}

By Thm.~\ref{thm:algo-consistent-binary}, this procedure is consistent for
binary-explainable BMGs.  Thm.~\ref{thm:algo-consistent-binary}, moreover,
guarantees some freedom in the choice of a coarse-graining
$\mathscr{V}'=\{V_1, V_2\}$ whenever $\mathscr{V}$ is not a bipartition.
We therefore aim to produce (locally) balanced trees in such situations,
i.e., we seek to minimize the difference of $|V_1|$ and $|V_2|$.  Formally,
this corresponds to the well-known \PROBLEM{Number Partitioning} problem
with the multiset $\{|V_i| \,\mid\, V_i \in \mathscr{V}\}$ as input.  We
use the efficient heuristic described by \citet{Karmarkar:83}, which in
general appears to yield very good solutions of the \PROBLEM{Number
  Partitioning} problem \cite{Boettcher:08}.

To construct the second binary tree $T^*$ based on subset of triples
$\mathscr{R}^*\subseteq\Rbin(\G,\sigma)$ that are displayed by $T$, we
employ an analogous coarse-graining in an otherwise unmodified
\texttt{BUILD} algorithm.  We note, however, that one could incorporate
more sophisticated approaches which e.g.\ use some greedy coarse-graining
method based on the \ur-cost.

The results for beBMG editing in essence recapitulate the observations
for general BMG editing, see Appendix~\ref{sect:app-beBMG}:
Alg.~\ref{alg:general-local-optimal} in combination with \emph{Louvain~(c)}
appears to be the best choice for the majority of parameter combinations.
However, it is outperformed by the \emph{BPMF} heuristic at high levels of
perturbation (insertion and deletion probability $0.2$). As in the general
case, construction of $T^*$ and using
$(\G^*,\sigma)\coloneqq\G(T^*,\sigma)$ as editing result appears to be
advantageous. Moreover, the difference of the editing result
$(\G^*,\sigma)$ to the original beBMG $(\G_\textrm{orig},\sigma)$ is on
average smaller than the difference of $(\G^*,\sigma)$ to the perturbed
graph $(\G,\sigma)$.

\section{Summary and discussion}

In this contribution, we have described a large class of heuristics for BMG
editing that operate in a recursive top-down fashion to (at least
implicitly) construct a tree $(T,\sigma)$ capturing the underlying
BMG-structure of an arbitrary input graph $(\G,\sigma)$.  We have shown
that this is closely related to a specific notion of locally good edits,
which we assess using the \ur-cost.  The \ur-cost counts the minimum number
of arc insertions and deletions of the BMG-editing for $(\G,\sigma)$ that
are linked to each inner node (and thus to their corresponding leaf
partitions) in $(T,\sigma)$ and cannot be reversed in subsequent recursion
steps. In particular, we showed that an optimal solution among all possible
partitions guarantees consistency of this class of heuristics (cf.\
Thm.~\ref{thm:algo-consistent} and~\ref{thm:algo-consistent-binary}).
Unfortunately, the corresponding problem \PROBLEM{BPURC} is itself
NP-complete.

We therefore suggested a number of approximation methods for finding
suitable partitions, and compared their performances in the context of
Alg.~\ref{alg:general-local-optimal}.  We find that, even though good
solutions for \PROBLEM{(B)PURC} alone do not seem to be the most adequate
approach, the value of the \ur-costs appears most clearly in a combination
with a method for community detection, more precisely, a modification of
the Louvain method \cite{Blondel:08}.

For all of the methods investigated here, we found that the Aho graph
$H\coloneqq [\mathscr{R}(\G,\sigma)[V'], V']$ serves as a useful starting
point for finding a suitable partition.  This choice is based on the idea
that, due to the properties of BMGs and in particular the construction of
the tree $(T,\sigma)$ from informative triples of the BMG
$(\G,\sigma)=\G(T,\sigma)$, arc insertions and deletions in
$(\G,\sigma)$ should not add too many new edges between the connected
components of the originally disconnected Aho graph of
$\mathscr{R}(\G,\sigma)$ (cf.\
Fig.~\ref{fig:introduce-inconsistency}). Therefore, we suggest that there
is a correlation between good partitions $\mathscr{V}$ of $V'$, i.e.\
partitions linked to few edits, and the minimization of the number of edges
in $H$ connecting vertices in distinct sets of $\mathscr{V}$.

For the general BMG editing problem, we did not make use of the information
contained in the set of forbidden triples $\mathscr{F}(\G,\sigma)$ of the
input graph $(\G,\sigma)$. It might be possible to adapt the algorithm
\texttt{MTT} \cite{He:06}, which identifies consistent pairs
$(\mathscr{R},\mathscr{F})$, instead of \texttt{BUILD}. \texttt{MTT}
constructs a coarse-graining $\mathscr{V}_\textrm{MTT}$ of the set of
connected components of the Aho graph (on $\mathscr{R}$) in order to
account for the forbidden triples in $\mathscr{F}$ in each recursion step.
Possibly, $\mathscr{V}_\textrm{MTT}$ (or some suitable graph
representation) yields a further improvement.  However, in case of beBMG
editing, the extended triple set $\Rbin(\G,\sigma)$ and thus the
corresponding Aho graphs by construction already cover the information
contained in $\mathscr{F}(\G,\sigma)$. Since no substantial improvement over 
the general case was observed in this case (cf.\
Fig.~\ref{fig:edit-comparison-binary}), we opted against more detailed
benchmarking of $\mathscr{V}_\textrm{MTT}$ in comparison to partitions
based on the Aho graph.

The purpose of this contribution is to establish a sound theoretical
foundation for practical approaches to BMG editing and to demonstrate that
the problem can solved for interestingly large instances at reasonable
accuracy. In computational biology, however, much larger problems than
the ones considered here would also be of interest. Less emphasis has been
placed here on computational efficiency and scalability of different
variants. We leave this as topic for future research. Given the
performance advantage of community detection over minimization of the
\ur-cost in each step, it seems most promising to focus on community
detection methods that scale well for very large system. The Louvain
method seems to be a promising candidate, since it has been applied 
successfully to large networks in the past \cite{Blondel:08}. This is largely 
due to the fact that the change of modularity in response to moving a
vertex between modules can be computed efficiently. We suspect that a
comparably fast computation of the \ur-cost may also be possible; this
does not appear to be trivial, however.  Moreover, the method could
probably be accelerated by moving vertices into the community of the first
neighbor such that this results in a (not necessarily optimal) improvement
of the \ur-cost.  A similar randomization approach has already shown to
only slightly affect the clustering quality in terms of modularity
\cite{Traag:2015}.

Since the restriction of a (be)BMG to a subset of colors is again a
(be)BMG, it may also be possible to remove large parts of the noise by
editing induced subgraph on a moderate number of colors, possibly using
information of the phylogeny of the species to select species (= color)
sets. Presumably, color sets with sufficient overlaps will need to be
considered. A systematic analysis of this idea, however, depends on
scalable BMG editing for large instances and goes beyond the scope of
this contribution.

A potential shortcoming of the empirical analysis in
Sec.~\ref{sec:heur-computational-exp} is the simplistic error model, i.e.,
the independent perturbation of arcs (and non-arcs). Better models will
depend on the investigation of BMGs derived from real-life sequence data.
Such data is often burdened with systematic errors arising e.g.\ from the
fact that a common ancestry often cannot be detected for very large
evolutionary distances and from unequal mutation rates during the evolution
of gene families, see e.g.\ \cite{Rost:99,Lafond:18,Stadler:20a} for more
in-depth discussions of these issues.  Benchmarking using real-life data,
however, is a difficult task because the ground truth is unknown and large,
well-curated data sets are not available. Our results so far suggest that a
good performance w.r.t.\ the input graph is also an indicator for a good
performance w.r.t.\ the true graph (cf.\ Fig.~\ref{fig:edit_comparison1}
and~\ref{fig:edit_comparison2}, green vs.\ blue boxplots). Moreover, they
at least suggest that realistic BMG data can be processed with
sufficient accuracy and efficiency to make BMGs an attractive alternative
to classical phylogenetic methods. The construction of bioinformatics
workflows to process best hit data, e.g.\ at the first processing stage of
\texttt{ProteinOrtho} \cite{Lechner:14a}, is a logical next step.

\subsection*{Acknowledgements}
This work was supported in part by the \emph{Deutsche
Forschungs\-gemeinschaft}, the Austrian Federal Ministries
BMK and BMDW, and the Province of Upper Austria in the frame of the COMET
Programme managed by FFG.

\subsection*{Conflict of interest}

The authors declare that they have no conflict of interest.

\begin{appendix}
  
  \section{Proofs}
  
  \subsection*{Proof of Lemma~\ref{lem:unsat-rel-charac}}
  
  \begin{clemma}{\ref{lem:unsat-rel-charac}}
    Let $(\G=(V,E),\sigma)$ be a properly vertex-colored digraph and let
    $\mathscr{V}=\{V_1,\dots,V_k\}$ be a partition of $V$ with
    $|\mathscr{V}|=k\ge2$.  Then
    \begin{equation*}
      U(\G,\mathscr{V}) = U_1(\G,\mathscr{V}) \;\cupdot\;
      U_2(\G,\mathscr{V}) \;\cupdot\; U_3(\G,\mathscr{V})\,.
    \end{equation*}
  \end{clemma}
  \begin{proof}
    We first note that $U_1\coloneqq U_1(\G,\mathscr{V})$,
    $U_2\coloneqq U_2(\G,\mathscr{V})$ and $U_3\coloneqq U_3(\G,\mathscr{V})$
    are pairwise disjoint. Furthermore, we have $x\ne y$ and
    $\sigma(x)\ne\sigma(y)$ for every
    $(x,y)\in U_1 \,\cupdot\, U_2 \,\cupdot\, U_3 $ and every
    $(x,y)\in U(\G,\mathscr{V})$.  Moreover, recall that
    $\mathscr{T}(\mathscr{V})$ is the set of trees $T$ on $V$ that satisfy
    $\mathscr{V} = \{L(T(v)) \mid v\in\child_{T}(\rho_T) \}$. Therefore,
    there is a one-to-one correspondence between the $k\ge 2$ sets in
    $\mathscr{V}$ and the children $\child_{T}(\rho_{T})$ of the root
    $\rho_{T}$ for any $T\in \mathscr{T}(\mathscr{V})$.  We denote by $v_{i}$
    the child corresponding to $V_{i}\in\mathscr{V}$; thus
    $V_{i}=L(T(v_{i}))$.
    
    We first show that $(x,y)\in U_1 \,\cupdot\, U_2 \,\cupdot\, U_3$ implies
    $(x,y)\in U(\G,\mathscr{V})$. Let $T\in \mathscr{T}(\mathscr{V})$
    be chosen arbitrarily, and let $\rho$ be its root.  Suppose that
    $(x,y)\in U_1$.  Thus, we have $(x,y)\in E$,
    $\sigma(y)\in \sigma(L(T(v_{i})))$, $x\preceq_{T} v_{i}$ and
    $y\preceq_{T} v'$ for some $v'\in\child_{T}(\rho)\setminus\{v_{i}\}$.
    Moreover, $\sigma(y)\in \sigma(L(T(v_{i})))$ implies that there is a
    vertex $y'\preceq_{T}v_{i}$ with $\sigma(y')=\sigma(y)$.  Taken together,
    we obtain $\lca_{T}(x,y')\preceq_T v_{i} \prec_T \rho = \lca_{T}(x,y)$,
    and thus $(x,y)\notin E(\G(T,\sigma))$.  If $(x,y)\in U_2$, we have
    $(x,y)\notin E$, $\sigma(y)\notin \sigma(L(T(v_{i})))$,
    $x\preceq_{T}v_{i}$ and $y\preceq_{T} v'$ for some
    $v'\in\child_{T}(\rho)\setminus\{v_{i}\}$.  Moreover,
    $\sigma(y)\notin \sigma(L(T(v_{i})))$ implies that there is no vertex
    $y'\preceq_{T}v_{i}$ with $\sigma(y')=\sigma(y)$.  Thus,
    $\lca_{T}(x,y') = \lca_{T}(x,y)= \rho$ holds for all $y'$ of color
    $\sigma(y')=\sigma(y)$, and thus $(x,y)\in E(\G(T,\sigma))$.  Finally,
    suppose $(x,y)\in U_3$. We have $(x,y)\notin E$ and $y$ is the only leaf
    of its color in $L(T(v_{i}))$.  Therefore, there is no vertex $y'$ with
    $\sigma(y')=\sigma(y)$ and $\lca_{T}(x,y') \prec_{T} \lca_{T}(x,y)$, and
    thus $(x,y)\in E(\G(T,\sigma))$.  In summary, one of the conditions in
    Def.~\ref{def:unsat-relations} is satisfied for $T$ in all three cases.
    Since $T$ was chosen arbitrarily, we conclude
    $(x,y)\in U(\G,\mathscr{V})$ for any
    $(x,y)\in U_1 \,\cupdot\, U_2 \,\cupdot\, U_3$.
    
    In order to show that $(x,y)\in U(\G,\mathscr{V})$ implies
    $(x,y)\in U_1 \,\cupdot\, U_2 \,\cupdot\, U_3$, we distinguish
    \textit{Case (a)}: $(x,y)\in E$ and $(x,y)\notin E(\G(T,\sigma))$ holds
    for all $T\in\mathscr{T}(\mathscr{V})$, and \textit{Case (b)}:
    $(x,y)\notin E$ and $(x,y)\in E(\G(T,\sigma))$ holds for all
    $T\in\mathscr{T}(\mathscr{V})$.
    \par\noindent\textit{Case (a).}
    $(x,y)\in E$ implies $\sigma(x)\ne\sigma(y)$. Moreover,
    there is a vertex $y'$ with $\sigma(y')=\sigma(y)$, and
    $\lca_{T}(x,y')\prec_{T}\lca_{T}(x,y)$ for every
    $T\in\mathscr{T}(\mathscr{V})$ because $(x,y)\notin E(\G(T,\sigma))$.
    Since this is true for all trees in $\mathscr{T}(\mathscr{V})$, there
    must be a set $V_{i}\in \mathscr{V}$ such that $x, y'\in V_{i}$, and in
    particular $\sigma(y')=\sigma(y)\in\sigma(V_{i})$.  Now suppose, for
    contradiction, that $y\in V_{i}$ and thus $x,y,y'\in V_{i}$.  In this
    case, we can choose a tree $T\in\mathscr{T}(\mathscr{V})$ such that
    $x,y\prec_{T}v_{i}$ for some child $v_{i}\in\child_{T}(\rho_T)$ and
    $\lca_{T}(x,y)\preceq\lca_{T}(x,y')$ hold for all $y'$ of color
    $\sigma(y')=\sigma(y)$. Hence, we obtain $(x,y)\in E(\G(T,\sigma))$ for
    this tree; a contradiction. Therefore, we conclude that
    $y\in V\setminus V_{i}$.  In summary, all conditions for $U_1$ are
    satisfied, and thus $(x,y)\in U_1$.
    \par\noindent\textit{Case (b).} We have $(x,y)\notin E$ and
    $(x,y)\in E(\G(T,\sigma))$ for all $T\in\mathscr{T}(\mathscr{V})$.  Let
    $V_{i}\in \mathscr{V}$ such that $x\in V_{i}$.  We distinguish the two
    cases (i) $y\notin V_{i}$, and (ii) $y\in V_{i}$.  In Case~(i), suppose,
    for contradiction, that $\sigma(y)\in\sigma(V_{i})$.  Then, for every
    tree $T\in\mathscr{T}(\mathscr{V})$, there must be a vertex $y'$ of color
    $\sigma(y)$ such that
    $\lca_{T}(x,y')\preceq_{T} v_{i}\prec_{T}\rho_T=\lca_{T}(x,y)$,
    contradicting $(x,y)\in E(\G(T,\sigma))$.  Therefore, we conclude
    $\sigma(y)\notin\sigma(V_{i})$.  It follows that $(x,y)\in U_2$.  In
    Case~(ii), assume, for contradiction, that there is a vertex
    $y'\in V_{i}$ of color $\sigma(y)$ such that $y\ne y'$. This together
    with $\sigma(y')=\sigma(y)\ne\sigma(x)$ implies that all three vertices
    $x,y,y'$ are pairwise distinct.  Since in addition $x,y,y'\in V_{i}$, we
    can choose a tree $T\in\mathscr{T}(\mathscr{V})$ such that
    $x,y,y'\prec_{T}v_{i}$ for some child $v_{i}\in\child_{T}(\rho_T)$ and
    $\lca_{T}(x,y')\prec_{T}\lca_{T}(x,y)$; a contradiction to
    $(x,y)\in E(\G(T,\sigma))$ for all $T\in\mathscr{T}(\mathscr{V})$.
    Therefore, we conclude that $y$ is the only vertex of its color in
    $V_{i}$.  It follows that $(x,y)\in U_3$.  \qed
  \end{proof}

  \subsection*{Proof of Lemma~\ref{lem:independent-U}}
  
  In order to prove Lemma~\ref{lem:independent-U}, we first need the
  following technical result which shows that the editing of an arc in
  Alg.~\ref{alg:general-local-optimal} will not be reversed in the subsequent
  recursion step.
  
  \begin{lemma}
    \label{lem:hierarchical-edits-independent}
    Let $(\G=(V,E),\sigma)$ be a properly vertex-colored digraph,
    $\mathscr{V}=\{V_1,\dots,V_k\}$ a partition of $V$ with
    $|\mathscr{V}|=k\ge2$, and $\mathscr{V}_i=\{V_{i,1},\dots,V_{i,l}\}$,
    $1\le i\le k$, a partition of $V_i$ with
    $|\mathscr{V}_i|=l\ge2$. Moreover, let
    $(\G'\coloneqq \G \triangle U(\G,\mathscr{V}),\sigma)$ be the colored 
    digraph
    that is obtained by applying the edits in $U(\G,\mathscr{V})$ to
    $(\G,\sigma)$.  Then
    $U(\G,\mathscr{V})\cap U(\G'[V_i],\mathscr{V}_i)=\emptyset$.
  \end{lemma}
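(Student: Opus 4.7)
}
The plan is to use the explicit decomposition from Lemma~\ref{lem:unsat-rel-charac} and argue case by case according to which of $U_1,U_2,U_3$ an element of $U(\G,\mathscr{V})$ belongs to, exploiting the fact that $U(\G'[V_i],\mathscr{V}_i)$ only involves ordered pairs whose components both lie in $V_i$.

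First, I would observe that every element of $U(\G'[V_i],\mathscr{V}_i)$ is a pair in $V_i\times V_i$. Hence any pair $(x,y)\in U_1(\G,\mathscr{V})\cup U_2(\G,\mathscr{V})$ is excluded automatically: by the definitions of $U_1$ and $U_2$, such a pair has $x\in V_j$ and $y\in V\setminus V_j$ for some $j$; since $\mathscr{V}$ is a partition, either $j\ne i$ (in which case $x\notin V_i$) or $j=i$ (in which case $y\notin V_i$). So it suffices to deal with pairs in $U_3(\G,\mathscr{V})$.

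Next, for $(x,y)\in U_3(\G,\mathscr{V})$ there is a unique $j$ with $x,y\in V_j$, and we may assume $j=i$ (otherwise $x,y\notin V_i$ and we are done). By definition of $U_3$, we have $(x,y)\notin E(\G)$ and $V_i[\sigma(y)]=\{y\}$, so in particular $\sigma(x)\ne\sigma(y)$. Applying the edit $U(\G,\mathscr{V})$ flips the non-arc $(x,y)$ to an arc, i.e.\ $(x,y)\in E(\G')$, and hence $(x,y)\in E(\G'[V_i])$. I would then verify that $(x,y)$ fails the membership conditions for all three types relative to $(\G'[V_i],\mathscr{V}_i)$: it cannot lie in $U_2(\G'[V_i],\mathscr{V}_i)$ or $U_3(\G'[V_i],\mathscr{V}_i)$ because these require $(x,y)$ to be a non-arc of $\G'[V_i]$; and it cannot lie in $U_1(\G'[V_i],\mathscr{V}_i)$ because that would require $\sigma(y)\in\sigma(V_{i,j'})$ for the block $V_{i,j'}\in\mathscr{V}_i$ containing $x$ (with $y$ in a different block), which contradicts $V_i[\sigma(y)]=\{y\}$ since $V_{i,j'}\subseteq V_i\setminus\{y\}$. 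Invoking Lemma~\ref{lem:unsat-rel-charac} in the form $U(\G'[V_i],\mathscr{V}_i)=U_1\,\cupdot\,U_2\,\cupdot\,U_3$ then yields $(x,y)\notin U(\G'[V_i],\mathscr{V}_i)$, completing the proof.

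The main (and essentially only) subtle point is the $U_3$ case: one has to notice that the singleton-color condition $V_i[\sigma(y)]=\{y\}$ used to declare $(x,y)$ unsatisfiable in the outer step is exactly what prevents $(x,y)$ from re-entering the UR set as a $U_1$-arc in the inner step. The cross-block cases for $U_1,U_2$ are immediate from the partition structure.
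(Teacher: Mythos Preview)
Your proposal is correct and follows essentially the same route as the paper's proof: both use the decomposition from Lemma~\ref{lem:unsat-rel-charac}, dispose of $U_1$ and $U_2$ immediately via the cross-block structure, and handle the $U_3$ case by observing that the singleton-color condition $V_i[\sigma(y)]=\{y\}$ blocks $(x,y)$ from re-entering as a $U_1$-arc after the edit flips it to an arc. The only cosmetic difference is that the paper frames the $U_3$ argument as a contradiction while you verify non-membership directly.
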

  \begin{proof}
    Let $\G'_i\coloneqq \G'[V_i]$.  The sets of unsatisfiable relations
    $U(\G,\mathscr{V})$ and $U(\G'_i, \mathscr{V}_i)$ are given by the
    (disjoint) unions
    $U_1(\G,\mathscr{V})\cupdot U_2(\G,\mathscr{V})\cupdot
    U_3(\G,\mathscr{V})$ and
    $U_1(\G'_i,\mathscr{V}_i)\cupdot U_2(\G'_i,\mathscr{V}_i)\cupdot
    U_3(\G'_i,\mathscr{V}_i)$, respectively (cf.\
    Lemma~\ref{lem:unsat-rel-charac}). First, let
    $(x,y)\in U_1(\G,\mathscr{V})$.  Since, by definition of
    $U_1(\G,\mathscr{V})$, $x$ and $y$ are contained in different sets of the
    partition $\mathscr{V}$, they cannot be both contained in $V_i$ and thus,
    $U_1(\G,\mathscr{V}) \cap U(\G'_i, \mathscr{V}_i)=\emptyset$. One
    analogously argues that
    $U_2(\G,\mathscr{V}) \cap U(\G'_i, \mathscr{V}_i)=\emptyset$. Now, assume
    for contradiction that
    $(x,y)\in U_3(\G,\mathscr{V})\cap U(\G'_i, \mathscr{V}_i)$.  By
    definition of $U(\G'_i, \mathscr{V}_i)$, this implies $x,y\in
    V_i$. Moreover, by definition of $U_3(\G,\mathscr{V})$, we have
    $(x,y)\notin E$, which immediately implies $(x,y)\in E(\G'_i)$. By
    Lemma~\ref{lem:unsat-rel-charac}, we therefore conclude
    $(x,y)\in U_1(\G'_i,\mathscr{V}_i)$. Let $V_{i,j}$ be the set of the
    partition $\mathscr{V}_i$ which contains $x$. Then, by definition of
    $U_1(\G'_i,\mathscr{V}_i)$, the color of $y$ is contained in both
    $V_{i,j}$ and $V_i\setminus V_{i,j}$, i.e., $V_i$ contains at least two
    vertices of color $\sigma(y)$. However, $(x,y)\in U_3(\G,\mathscr{V})$
    and $y\in V_i$ together imply that $y$ is the only vertex of its color in
    $V_i$; a contradiction.  \qed
  \end{proof}
  
  We are now in the position to prove the more general
  \begin{clemma}{\ref{lem:independent-U}}
    All edit sets $U(\G^*[V'],\mathscr{V})$ constructed in
    Alg.~\ref{alg:general-local-optimal} are pairwise disjoint.
  \end{clemma}
  \begin{proof}
    First note that, by Lemma~\ref{lem:unsat-rel-charac}, we have
    $\sigma(x)\ne\sigma(y)$ for all $(x,y)\in U(\G^*[V'],
    \mathscr{V})$. Hence, the graph $(\G^*,\sigma)$ remains properly colored
    during the whole recursion.  Moreover, recursive calls on a set $V'$ with
    $|V'|=1$ trivially contribute with a \ur-cost of zero.
    
    By construction, the partitions in consecutive calls of \texttt{Edit()}
    form a hierarchical refinement such that in each recursive call a single
    element of $V_i\in\mathscr{V}$ is refined.  Clearly edit sets encountered
    in independent branches of the recursion tree are disjoint because they
    pertain to disjoint vertex sets. For directly consecutive calls of
    \texttt{Edit()}, Lemma~\ref{lem:hierarchical-edits-independent} states
    that the edits sets are disjoint.  Now consider two recursive call on
    $V'$ and $V''$ with $V''\subset V'$ that are not directly consecutive.
    Let $\mathscr{V}'$ and $\mathscr{V}''$, resp., be the partitions chosen
    for the vertex sets $V'$ and $V''$ of $\G'$ and $\G''$ at the beginning
    of the two recursion steps.  We can apply the same arguments as in the
    proof of Lemma~\ref{lem:hierarchical-edits-independent} to conclude that
    $U_i(\G'[V'],\mathscr{V}') \cap U(\G''[V''],\mathscr{V}'')=\emptyset$,
    $i\in \{1,2\}$.  Finally, assume, for contradiction, that
    $(x,y)\in U_3(\G'[V'],\mathscr{V}') \cap U(\G''[V''],\mathscr{V}'')$.  By
    definition of $U(\G''[V''], \mathscr{V}'')$, this implies $x,y\in
    V''$. Moreover, by definition of $U_3(\G'[V'],\mathscr{V}')$, we have
    $(x,y)\notin E(\G')$, which immediately implies
    $(x,y)\in E(\G'\triangle U(\G'[V'],\mathscr{V}'))$, i.e., $(x,y)$ is an
    arc after the editing in this step. Since both $x,y$ are contained in
    $V''$, it follows from Lemma~\ref{lem:unsat-rel-charac} that all edit
    steps on the way from $\G'[V'']$ to $\G''[V'']$ must be performed by the
    set $U_3$, i.e., they exclusively correspond to arc
    insertions. Therefore, $(x,y)$ is still an arc in $\G''[V'']$.  By
    Lemma~\ref{lem:unsat-rel-charac}, we therefore conclude that
    $(x,y)\in U_1(\G''[V''],\mathscr{V}'')$. Let $V_{x}$ be the set of the
    partition $\mathscr{V}''$ that contains $x$. Then, by definition of
    $U_1(\G''[V''],\mathscr{V}'')$, the color of $y$ is contained in both
    $V_{x}$ and $V''\setminus V_{x}$, i.e., $V''$ contains at least two
    vertices of color $\sigma(y)$. However,
    $(x,y)\in U_3(\G'[V'],\mathscr{V}')$ and $y\in V''\subset V_{x,y}$ for
    some $V_{x,y}\in \mathscr{V}'$ together imply that $y$ is the only vertex
    of its color in $V''$; a contradiction.  \qed
  \end{proof}
  
  \subsection*{Proof of Theorem~\ref{thm:BPURC-NPc}}
  
  In this section we show that \PROBLEM{(B)PURC} is NP-hard by reduction from
  \PROBLEM{Set Splitting}.
  
  \begin{problem}[\PROBLEM{Set Splitting}]\ \\
    \begin{tabular}{ll}
      \emph{Input:}    & A collection $\mathfrak{C}$ of subsets of a finite set 
      $S$, denoted by $(\mathfrak{C},S)$.\\
      \emph{Question:} & Is there a bipartition of $S$ into two subsets $S_1$ 
      and 
      $S_2$ such that\\
      &no subset in $\mathfrak{C}$ is entirely contained in either $S_1$ or 
      $S_2$?
    \end{tabular}
  \end{problem}
  
  \begin{proposition}{\cite{Lovasz:73}}
    \label{prop:set-splitting-NPc}
    \PROBLEM{Set Splitting} is NP-complete.
  \end{proposition}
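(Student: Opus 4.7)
The plan is to establish Proposition~\ref{prop:set-splitting-NPc} by verifying membership in NP and then giving a polynomial-time reduction from \PROBLEM{3-SAT}, whose NP-completeness is classical.

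Membership in NP is immediate: a bipartition $(S_1,S_2)$ of $S$ serves as a certificate of size $O(|S|)$, and for each $C\in\mathfrak{C}$ one checks in time $O(|C|)$ that both $C\cap S_1\ne\emptyset$ and $C\cap S_2\ne\emptyset$. Total verification time is thus $O(\sum_{C\in\mathfrak{C}}|C|)$, which is polynomial in the input size.

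For NP-hardness, I would reduce from \PROBLEM{3-SAT}. Given a 3-CNF formula $\varphi$ with variables $x_1,\dots,x_n$ and clauses $C_1,\dots,C_m$ (each a disjunction of three literals), build an instance $(\mathfrak{C},S)$ of \PROBLEM{Set Splitting} as follows. Take as ground set
\[
S=\{x_1,\bar x_1,\dots,x_n,\bar x_n\}\cup\{z\},
\]
where $z$ is a fresh ``anchor'' element. Put into $\mathfrak{C}$ the two-element sets $\{x_i,\bar x_i\}$ for every variable (to force $x_i$ and $\bar x_i$ into opposite parts of any valid bipartition, so that such a bipartition encodes a well-defined truth assignment) and, for every clause $C_j$ with literals $\ell_{j,1},\ell_{j,2},\ell_{j,3}$, the four-element set $\{\ell_{j,1},\ell_{j,2},\ell_{j,3},z\}$. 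Clearly the construction is polynomial in $|\varphi|$.

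The central equivalence I would then verify is that $(\mathfrak{C},S)$ admits a valid bipartition if and only if $\varphi$ is satisfiable. For one direction, given any valid bipartition, swap $S_1,S_2$ if needed so that $z\in S_2$, and set $x_i$ to \emph{true} iff $x_i\in S_1$; the pair-gadgets make this assignment consistent, and the fact that each clause-gadget $\{\ell_{j,1},\ell_{j,2},\ell_{j,3},z\}$ is not entirely contained in $S_2$ forces at least one of its literals into $S_1$, i.e., satisfies $C_j$. Conversely, a satisfying assignment of $\varphi$ yields the bipartition obtained by placing every true literal into $S_1$ and every false literal together with $z$ into $S_2$; the variable gadgets are split by construction, and each clause gadget is split because it contains at least one true literal in $S_1$ and the anchor $z$ in $S_2$. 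The only conceptually subtle ingredient is the role of the anchor element $z$, which breaks the natural $S_1\leftrightarrow S_2$ symmetry of set splitting and converts its symmetric ``not-all-equal'' flavour into the asymmetric ``at least one true'' condition of 3-SAT; once this gadget is in place, both directions of the correctness argument are straightforward case checks and I do not anticipate further obstacles.
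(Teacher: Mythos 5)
Your proof is correct. Note first that the paper does not actually prove Proposition~\ref{prop:set-splitting-NPc}: it is imported as a known result with a citation to Lov\'asz (1973), where it appears in the guise of hypergraph 2-colorability. So there is no in-paper argument to compare against; what you have supplied is a self-contained justification of the black box. Your reduction is the standard one and it is sound: membership in NP is routine, the variable gadgets $\{x_i,\bar x_i\}$ force any valid bipartition to encode a consistent truth assignment, and the anchor element $z$ correctly breaks the $S_1\leftrightarrow S_2$ symmetry, turning the ``not entirely contained in either part'' condition on the clause gadget $\{\ell_{j,1},\ell_{j,2},\ell_{j,3},z\}$ into the asymmetric ``at least one literal lands opposite $z$,'' i.e., at least one literal is true. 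Both directions of the equivalence check out, including the degenerate cases where a clause has repeated or complementary literals (the gadget just becomes a smaller set, and the argument is unaffected). The only cosmetic remark is that the reduction is even more immediate from \PROBLEM{NAE-3SAT}, where no anchor is needed; your choice to start from plain \PROBLEM{3-SAT} costs one extra element and is equally valid.
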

  
  \begin{ctheorem}{\ref{thm:BPURC-NPc}}
    \PROBLEM{BPURC} is NP-complete.
  \end{ctheorem}
  \begin{proof}
    Given a properly vertex-colored digraph $(\G=(V,E),\sigma)$ and a
    bipartition $\mathscr{V}$ of $V$, the set $U(\G,\mathscr{V})$ and thus
    the \ur-cost $c(\G,\mathscr{V})=|U(\G,\mathscr{V})|$ can be computed in
    polynomial time according to Cor.~\ref{cor:U-polytime}.  Therefore,
    \PROBLEM{BPURC} is contained in NP.  To show NP-hardness, we use reduction
    from \PROBLEM{Set Splitting}.
    
    Let $(\mathfrak{C},S)$ be an instance of \PROBLEM{Set Splitting}. We may
    assume w.l.o.g.\ that $|C|\ge 2$ holds for all $C\in \mathfrak{C}$, since
    otherwise there is no solution at all for \PROBLEM{Set Splitting}. In
    addition, we assume that $\bigcup_{C\in \mathfrak{C}}C = S$. To see that
    this does not yield a loss of generality, suppose that
    $\bigcup_{C\in \mathfrak{C}}C =S' \subsetneq S$. If $\{S'_1,S'_2\}$ is a
    solution for $(\mathfrak{C},S')$ then no subset in $\mathfrak{C}$ is
    entirely contained in either $S'_1$ or $S'_2$. Therefore, we can
    construct a solution $(S_1,S_2)$ for $(\mathfrak{C},S)$ by arbitrarily
    adding the elements in $S\setminus S'$ to either $S'_1$ or $S'_2$.  In
    contrast, $\{S_1\cap S',S_2\cap S'\}$ is a solution for
    $(\mathfrak{C},S')$ provided that $\{S_1,S_2\}$ is a solution for
    $(\mathfrak{C},S)$.
    
    Now, let $(\mathfrak{C},S)$ be an instance of \PROBLEM{Set Splitting} and
    define, for all $s\in S$, the set
    $\mathfrak{C}(s) \coloneqq \{C\mid C\in \mathfrak{C}, s\in C\}$ as the
    subset of $\mathfrak{C}$ that comprises all elements $C\in \mathfrak{C}$
    that contain $s$.  Note that $\mathfrak{C}(s) \neq \emptyset$ for all
    $s\in S$, since we have assumed $\bigcup_{C\in \mathfrak{C}}C = S$, i.e.,
    every $s\in S$ is contained in some element of $\mathfrak{C}$.
    
    We construct a graph $(\G=(V,E),\sigma)$ that serves as input for
    \PROBLEM{BPURC} as follows:
    \begin{description}
      \item[\emph{Step 1:}] For all $s\in S$, construct an $s$-gadget
      $\mathscr{G}_s$ as follows:
      \begin{enumerate}
        \item[(i)] For all $C\in \mathfrak{C}(s)$, add four new vertices to
        $\mathscr{G}_s$ of which two are colored with $(C,1)$ and the other
        two with $(C,2)$.
        \item[(ii)] Add arcs $(x,y),(y,x)$ between all $x,y\in V(\mathscr{G}_s)$
        with $\sigma(x)\neq \sigma(y)$.
      \end{enumerate} \smallskip
      \item[\emph{Step 2:}] Set
      $V \coloneqq \bigcupdot_{s\in S}\; V(\mathscr{G}_s)$,
      $E \coloneqq \bigcupdot_{s\in S}\; E(\mathscr{G}_s)$ and preserve the
      coloring of the vertices within the $s$-gadgets to obtain the graph
      $(\G,\sigma)$.
    \end{description}
    By construction, $|V(\mathscr{G}_s)| = 4|\mathfrak{C}(s)|$ and
    $|V| = \sum_{s\in S}\; 4|\mathfrak{C}(s)|\le 4|\mathfrak{C}||S|$.  Hence,
    the construction of $(\G,\sigma)$ can be achieved in polynomial time.
    Moreover, by construction, $\{V(\mathscr{G}_s)\mid s\in S\}$ forms a
    partition of $V$ and there are no arcs between vertices of distinct
    $s$-gadgets.  Furthermore, $\sigma(V) = \mathfrak{C}\times \{1,2\}$.  An
    illustrative example of such a constructed graph $(\G,\sigma)$ is
    provided in Fig.~\ref{fig:BPURC-reduction}.
    
    \begin{figure}[htb]
      \centering
      \includegraphics[width=0.85\textwidth]{./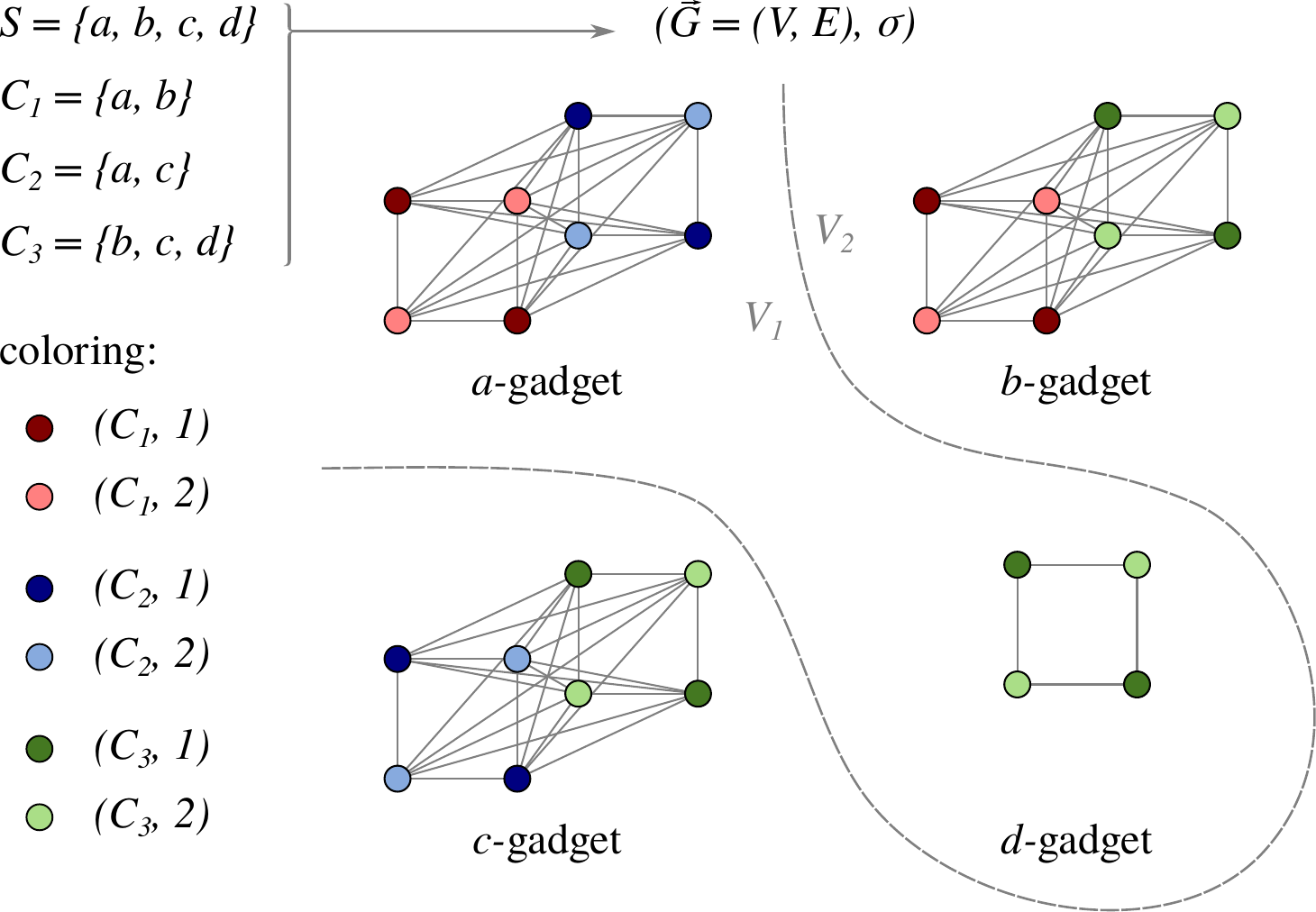}
      \caption[]{Example for the reduction from an instance
        $(\mathfrak{C},S)$ of \PROBLEM{Set Splitting} to an instance
        $(\G,\sigma)$ with $k=0$ of \PROBLEM{BPURC}, as specified in the proof
        of Thm.~\ref{thm:BPURC-NPc}.  In this example, we have $S=\{a,b,c,d\}$
        and $\mathfrak{C}=\{C_1,C_2,C_3\}$.  By construction, all arcs are
        bidirectional and thus, arrow heads are omitted in the drawing of
        $(\G,\sigma)$. A solution for $(\mathfrak{C},S)$ is $S_1=\{a,d\}$ and
        $S_2=\{b,c\}$. The latter is equivalent to a solution of
        \PROBLEM{BPURC} by ``separating'' the $a$- and $d$-gadget from the
        $b$- and $c$-gadget as indicated by the dashed line. The latter
        yields a bipartition $\mathscr{V}=\{V_1,V_2\}$ of $V(\G)$
        that solves \PROBLEM{BPURC} with input $(\G,\sigma,k=0)$.\\
        Note, slight changes of the input $(\mathfrak{C},S)$ to
        $S'=S\setminus\{d\}$ and
        $\mathfrak{C}'=\{C_1,C_2,C_3\setminus \{d\}\}$ would yield an
        instance of \PROBLEM{Set Splitting} that has no yes-answer. In this
        case, the $d$-gadget would disappear from $(\G,\sigma)$ resulting in
        the graph $(\G',\sigma')$.  It is easy to see that there is no
        bipartition $\mathscr{V}=\{V_1,V_2\}$ of $V(\G')$ such that
        $\sigma(V_1)=\sigma(V_2)=\sigma(V(\G'))$ and no gadget gets split up
        between $V_1$ and $V_2$; two necessary properties to obtain a
        solution for \PROBLEM{BPURC} with input $(\G',\sigma')$ and $k=0$
        (cf.\ proof of Thm.~\ref{thm:BPURC-NPc}).}
      \label{fig:BPURC-reduction}
    \end{figure}
    
    We continue by showing that an instance $(\mathfrak{C},S)$ of
    \PROBLEM{Set Splitting} has a yes-answer if and only if \PROBLEM{BPURC}
    has a yes-answer for the input graph $(\G=(V,E),\sigma)$ constructed
    above and $k=0$.  In particular, we will show that $\{S_1,S_2\}$ is a
    solution of $(\mathfrak{C},S)$ if and only if $\mathscr{V} = \{V_1,V_2\}$
    with $V_i=\cup_{s\in S_i} V(\mathscr{G}_s)$, $i\in \{1,2\}$ is a solution
    for $(\G,\sigma)$ where $c(\G,\mathscr{V}) = 0$.
    
    Recall that the set of unsatisfiable relations $U(\G,\mathscr{V})$ of a
    bipartition $\mathscr{V}$ of $V$ is given by the (disjoint) union
    $U_1\cupdot U_2\cupdot U_3$ of the three sets
    $U_1 \coloneqq U_1(\G,\mathscr{V})$, $U_2 \coloneqq U_2(\G,\mathscr{V})$
    and $U_3 \coloneqq U_3(\G,\mathscr{V})$ (cf.\
    Lemma~\ref{lem:unsat-rel-charac}).
    
    First suppose that \PROBLEM{Set Splitting} with input $(\mathfrak{C},S)$
    has a yes-answer and let $\{S_1,S_2\}$ be one of its solutions.  Hence,
    no subset in $\mathfrak{C}$ is entirely contained in either $S_1$ or
    $S_2$, and both sets must be non-empty.  Consider the set
    $\mathscr{V} = \{V_1,V_2\}$ with
    $V_i=\cup_{s\in S_i}\, V(\mathscr{G}_s)$, $i\in \{1,2\}$.  Since
    $\{S_1,S_2\}$ is a bipartition of $S$ and
    $\{V(\mathscr{G}_s)\mid s\in S\}$ is a partition of $V$, we conclude that
    $\mathscr{V}$ is a bipartition of $V$ and that $V(\mathscr{G}_s)$ is
    entirely contained in either $V_1$ or $V_2$ for all $s\in S$.  Together
    with the fact that there are no arcs in $\G$ between vertices of distinct
    $s$-gadgets this implies that $U_1 =\emptyset$.
    
    In order to verify that $U_2 = U_3 = \emptyset$, we first show that
    $\sigma(V_1)=\sigma(V_2) = \sigma(V)$ and that $V_1$ and $V_2$ contain at
    least two vertices of every color, respectively. Consider two arbitrary
    pairs $(C,1), (C,2)\in \sigma(V)=\mathfrak{C}\times\{1,2\}$.  Since
    $\{S_1,S_2\}$ is a solution for \PROBLEM{Set Splitting} with input
    $(\mathfrak{C},S)$, there are vertices $s\in C\cap S_1$ and
    $s'\in C\cap S_2$ and thus, $V(\mathscr{G}_s)\subseteq V_1$ and
    $V(\mathscr{G}_{s'})\subseteq V_2$.  By construction, each of the sets
    $V(\mathscr{G}_s)$ and $V(\mathscr{G}_{s'})$ contains two vertices of
    color $(C,1)$ and two vertices of color $(C,2)$.  Since
    $V(\mathscr{G}_s)\subseteq V_1$ and $V(\mathscr{G}_{s'})\subseteq V_2$,
    the sets $V_1$ and $V_2$ each contain two vertices of both colors $(C,1)$
    and $(C,2)$.  Since $(C,1), (C,2)\in \sigma(V)$ are arbitrary and
    $\sigma(V)= \mathfrak{C}\times\{1,2\}$, we can conclude that
    $\sigma(V_1)=\sigma(V_2) = \sigma(V)$, and that $V_1$ and $V_2$ contain
    at least two vertices of every color.  Now, $\sigma(V_1)=\sigma(V_2)$
    implies that $U_2=\emptyset$. Moreover, since $V_1$ and $V_2$ contain at
    least two vertices of every color, we also have that $U_3=\emptyset$.  In
    summary, we have
    $U(\G,\mathscr{V})=U_1\cupdot U_2\cupdot U_3 = \emptyset$, and thus,
    $c(\G,\mathscr{V})=0$.  Therefore, \PROBLEM{BPURC} with input
    $(\G,\sigma,k=0)$ has a yes-answer.
    
    Now suppose \PROBLEM{BPURC} with input $(\G,\sigma,k=0)$ has a yes-answer
    and thus, a solution $\mathscr{V}=\{V_1,V_2\}$.  Consequently,
    $U(\G,\mathscr{V})=U_1\cupdot U_2\cupdot U_3 =\emptyset$.  We first show
    that both $V_1$ and $V_2$ must contain a vertex of every color in
    $\sigma(V) = \mathfrak{C}\times\{1,2\}$.  To this end, we assume for
    contradiction that w.l.o.g.\ $V_1$ contains no vertex of color $(C,1)$
    for some $C\in\mathfrak{C}$.  Since $|C|\ge 2$, $C$ contains two distinct
    elements $s, s'\in S$.  Note that $C\in \mathfrak{C}(s)$ and
    $C\in \mathfrak{C}(s')$.  By construction in Step 1, there are vertices
    $y\in V(\mathscr{G}_s)$ and $y'\in V(\mathscr{G}_{s}')$ of color
    $\sigma(y)=\sigma(y')=(C,1)$.  Since $(C,1)\notin\sigma(V_1)$, it must
    hold that $y,y'\in V_2$.  Now consider an arbitrary vertex $x\in V_1$.
    Note that $(C,1)\notin\sigma(V_1)$ implies $\sigma(x)\ne(C,1)$.  Since
    $\mathscr{G}_s$ and $\mathscr{G}_{s'}$ are, by construction, vertex
    disjoint, $x$ cannot belong two both gadgets $\mathscr G_s$ and
    $\mathscr{G}_{s'}$.  Therefore, we can choose $\tilde{y}\in\{y,y'\}$ such
    that $x$ and $\tilde{y}$ belong to distinct gadgets, and we obtain
    $(x,\tilde{y})\notin E$ by construction.  This together with $x\in V_1$,
    $\tilde{y}\in V_2=V\setminus V_1$ and
    $\sigma(\tilde{y})=(C,1)\notin\sigma(V_1)$ implies
    $(x,\tilde{y})\in U_2$. Hence, $U_2\neq \emptyset$; a contradiction.
    Therefore, we conclude that both $V_1$ and $V_2$ contain vertices of all
    colors in $\sigma(V)=\mathfrak{C}\times\{1,2\}$.
    
    We continue by showing that $V(\mathscr{G}_s)$ is entirely contained in
    either $V_1$ or $V_2$ for all $s\in S$.  To this end, assume for
    contradiction that there is a gadget $\mathscr{G}_s$ such that
    $W_1\coloneqq V_1\cap V(\mathscr{G}_s)$ and
    $W_2\coloneqq V_2\cap V(\mathscr{G}_s)$ are both non-empty.  Since
    $V(\mathscr{G}_s)$ forms a connected component in $(\G,\sigma)$ and all
    arcs are bidirectional by construction, we can find two vertices
    $x\in W_1$ and $y\in W_2$ such that $(x,y)\in E$.  This together with the
    facts that $x$ and $y$ are in distinct sets $V_1$ and $V_2$ and that both
    $V_1$ and $V_2$ contain all colors of $\sigma(V)$, implies that
    $(x,y)\in U_1$.  Hence, $U_1\neq \emptyset$; a contradiction.  Therefore,
    the vertex set of each $s$-gadget is entirely contained in either $V_1$
    or $V_2$.
    
    We can construct a well-defined partition $\{S_1,S_2\}$ of $S$ such that
    $s\in S_i$ if and only if $V(\mathscr G_s)\subseteq V_{i}$,
    $i\in\{1,2\}$.  By construction, there are vertices of color $(C,1)$ and
    $(C,2)$ in $\mathscr{G}_s$ if and only if $s\in C$.  This together with
    the fact that both $V_1$ and $V_2$ contain vertices of all colors
    $\mathfrak{C}\times\{1,2\}$ implies that $S_1\cap C$ and $S_2\cap C$ are
    both non-empty for every $C\in\mathfrak{C}$.  Hence, $\{S_1, S_2\}$ is a
    solution for \PROBLEM{Set Splitting} with input $(\mathfrak{C},S)$.  \qed
  \end{proof}
  
  \clearpage
  
  \section{BPMF is not a consistent heuristic for \PROBLEM{MaxRTC}} 
  \label{sect:app-BPMF}
  
  The example in Fig.~\ref{fig:BPMF-not-consistent} shows that 
  Alg.~\ref{alg:simple} in combination with the Best-Pair-Merge-First (BPMF) 
  heuristic \cite{Wu:04,Byrka:10} is not a consistent heuristic for BMG editing.
  In particular, BPMF is not consistent for \PROBLEM{MaxRTC}.
  The input graph $(\G_\textrm{orig}=(V,E),\sigma)$ is a BMG and explained by 
  $(T_\textrm{orig},\sigma)$.
  Therefore, its set of informative triples 
  \begin{equation*}
    \mathscr{R}\coloneqq 
    \mathscr{R}(\G_\textrm{orig},\sigma) =
    \{ ab_1|b_2,\, ac_1|c_2,\, ac_1|c_3,\, 
    b_1c_1|c_2,\, b_1c_1|c_3,\, b_2c_1|c_2,\, b_2c_1|c_3,\, c_1b_2|b_1\}
  \end{equation*}
  is consistent (cf.\ Prop.~\ref{prop:BMG-charac}).  On the right-hand side
  of Fig.~\ref{fig:BPMF-not-consistent}, the first three cluster merging
  steps in BPMF with input $\mathscr{R}$ are shown where the numbers are the
  scores $score(S_i, S_j)$ for each pair of clusters $S_i$ and $S_j$ as defined
  in \cite{Byrka:10}.  The pink arrows link inner vertices of the resulting
  binary tree $(T,\sigma)$ and the corresponding cluster merging step based
  on the maximal score.  The tree $(T,\sigma)$ does not display the triple
  $ab_1|b_2$.  As a consequence, its BMG $\G(T,\sigma)$ contains the
  additional arc $(a,b_2)$, and the triple set $\mathscr{R}^*$ extracted from
  $T$ in Alg.~\ref{alg:simple} is a proper subset of $\mathscr{R}$.  In
  particular, the final editing result $\G(T^*,\sigma)$ with
  $T^*=\Aho(\mathscr{R}^*,V)$ also contains the arc $(a,b_2)$ which was
  not present in the original BMG.
  
  \begin{figure}[htb]
    \centering
    \includegraphics[width=0.85\textwidth]{./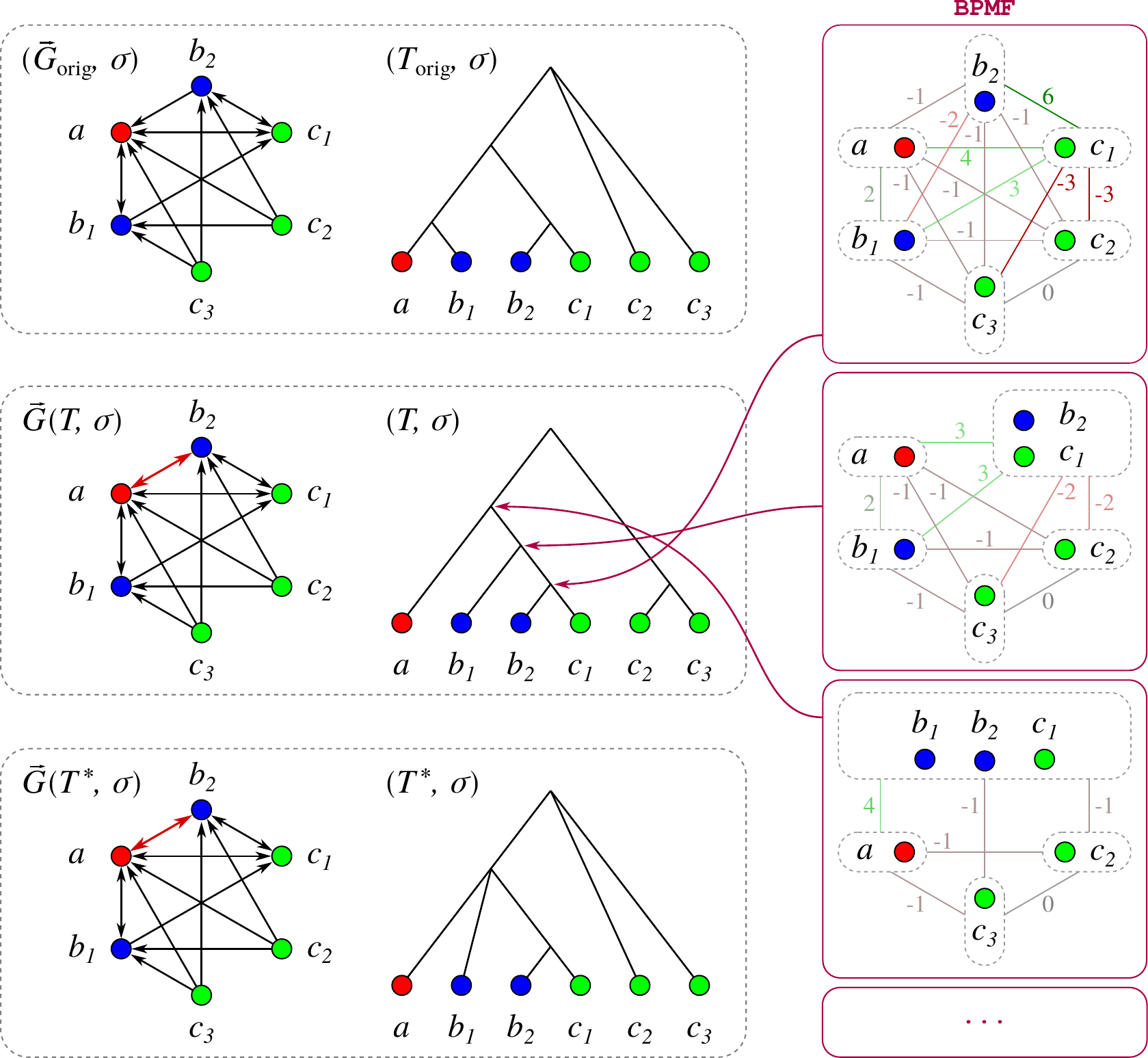}
    \caption{Example showing that BPMF is not a consistent heuristic for
      \PROBLEM{MaxRTC}, and that Alg.~\ref{alg:simple} with BPMF is not a
      consistent heuristic for BMG editing. See the text for a detailed
      description.}
    \label{fig:BPMF-not-consistent}
  \end{figure}
  
  \clearpage
  
  \section{Analysis of single-leaf splits} 
  \label{sect:app-single}
  
  Fig.~\ref{fig:single-vertex-cut} quantifies the abundance of single-leaf
  splits on the same instances as in Fig.~\ref{fig:part-quality}. We
  distinguish between single-leaf splits that are correct w.r.t.\ the Aho
  graph $H_\textrm{orig}$ of the orginal unperturbed graph, and single-leaf
  splits that are not present in the unperturbed target. MinCut, Karger,
  Simple Greedy and Gradient Walk frequently produce single-leaf splits that
  are not present in $H_\textrm{orig}$. The modularity-based Louvain method,
  in contrast, never returned a single-leaf split, even if it was present in
  $H_\textrm{orig}$. The modified Louvain method is most often in good
  agreement with $H_\textrm{orig}$ as far as single-leaf splits are
  concerned, at least for pertubation levels of 10\% of insertions and
  deletions.
  \begin{figure}[htb]
    \centering
    \includegraphics[width=0.85\textwidth]{./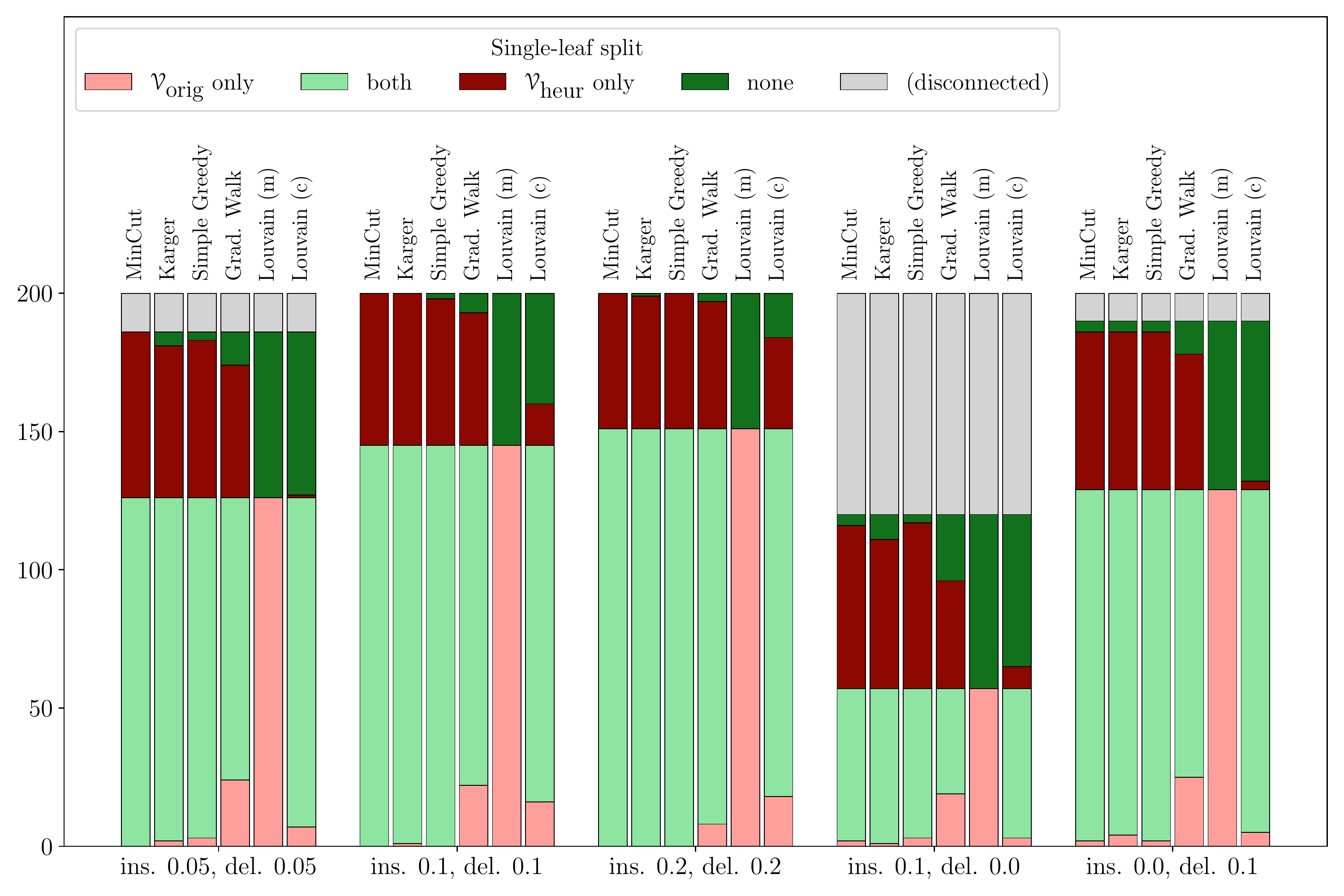}
    \caption{Abundance of single-leaf splits for pairs of BMGs
      $(\G_{\textrm{orig}},\sigma)$ and disturbed graphs $(\G,\sigma)$ (both
      with vertex set $V$).  The partition $\mathscr{V}_{\textrm{orig}}$
      corresponds to the connected components of the Aho graph
      $H_\textrm{orig}\coloneqq [\mathscr{R}(\G_{\textrm{orig}},\sigma), V]$
      and, hence, to the partition induced by the subtrees of the children of
      the root of the LRT $(T,\sigma)$ of $(\G_{\textrm{orig}},\sigma)$ (cf.\
      Prop.~\ref{prop:BMG-charac}).  The partition $\mathscr{V}_\textrm{heur}$
      corresponds to the partition of $V$ as determined by one of the
      partitioning methods (based on $H\coloneqq[\mathscr{R}(\G,\sigma), V]$).
      The gray parts of the bars comprise those instances for which $H$ is
      disconnected. The light and dark red bars indicate the amount of graphs
      for which only $\mathscr{V}_{\textrm{orig}}$ or
      $\mathscr{V}_\textrm{heur}$, resp., is a single-leaf split, while light
      and dark green bars represent instances for which both and none of the
      two partitions, resp., are single-leaf splits.  Note that the partitions
      were not compared explicitly, in particular, the identified singletons
      in $\mathscr{V}_\textrm{heur}$ in the light green instances may deviate
      from those in $\mathscr{V}_{\textrm{orig}}$ in some cases.  Example plot
      for $|V|=30$ vertices and $|\sigma(V)|=10$ colors in each graph.  200
      generated graph pairs per combination of arc insertion (ins.) and
      deletion (del.) probabilities.}
    \label{fig:single-vertex-cut}
  \end{figure}
  
  \clearpage
  
  \section{Performance analysis in beBMG editing}
  \label{sect:app-beBMG}
  
  \begin{figure}[htb]
    \centering
    \includegraphics[width=0.85\textwidth]{./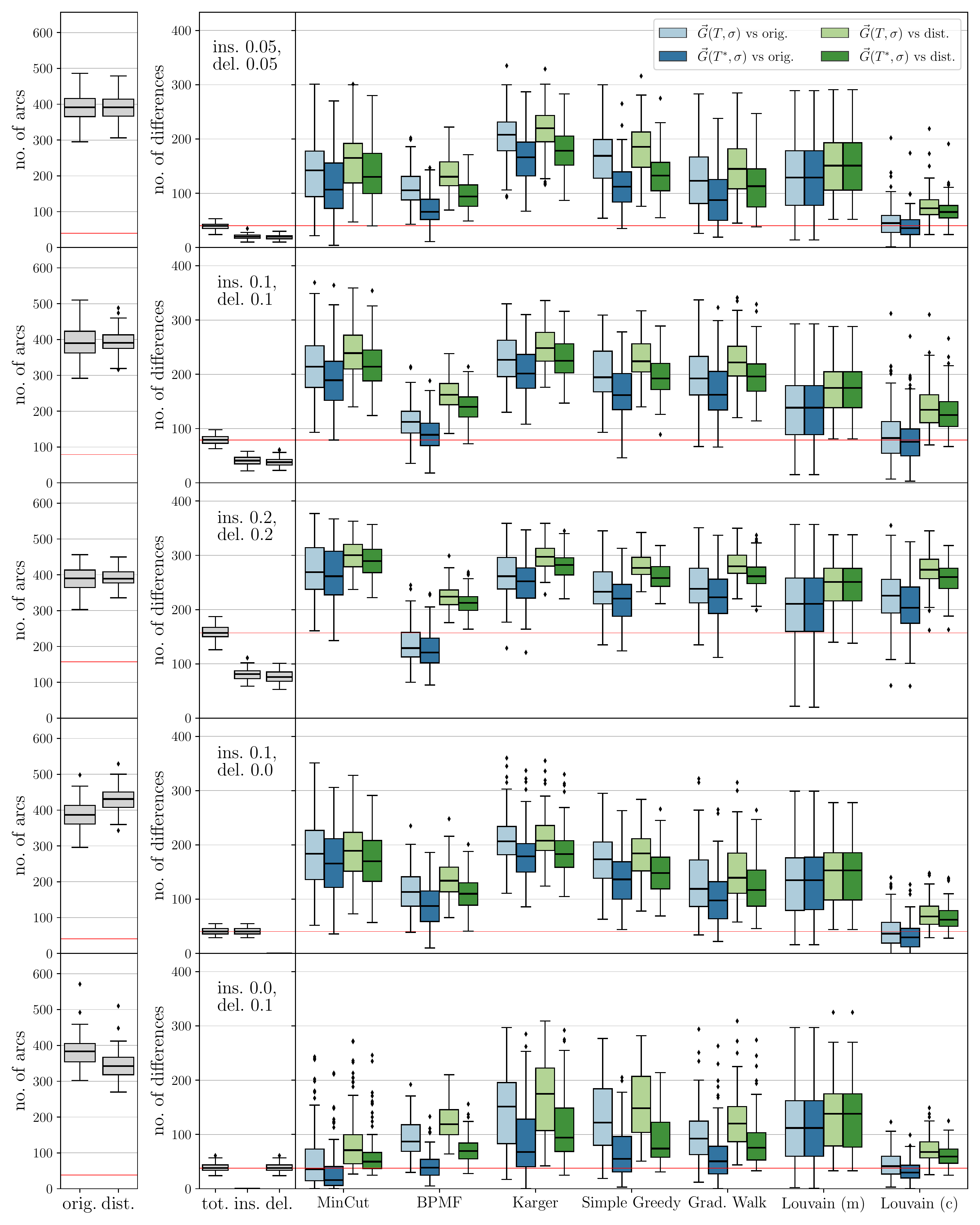}
    \caption{Performance comparison of several beBMG editing heuristics based
      on the no.\ of arc differences. See Fig.~\ref{fig:edit_comparison1} for
      further description.  Example plot for $|V|=30$ vertices and
      $|\sigma(V)|=10$ colors in each graph, 100 graphs per row.}
    \label{fig:edit-comparison-binary}
  \end{figure}

\end{appendix}

\bibliography{preprint-bmg-heuristics}


\end{document}